\newcommand{\la}{\langle}
\newcommand{\ra}{\rangle}
\newcommand{\SA}{{\mathcal{A}}}
\newcommand{\ZZ}{\mathbb{Z}}
\newcommand{\CC}{\mathbb{C}}
\newcommand{\RR}{\mathbb{R}}
\newcommand{\SO}{\mathrm{SO}}
\newcommand{\SU}{\mathrm{SU}}
\newcommand{\U}{\mathrm{U}}
\newcommand{\RP}{\mathbb{RP}}
\newcommand{\id}{\Id}
\newcommand{\ox}{\otimes}
\DeclareMathOperator{\tr}{tr}
\DeclareMathOperator{\Id}{Id}
\DeclareMathOperator{\End}{End}
\numberwithin{equation}{section}
\theoremstyle{plain}
\newtheorem{proposition}{Proposition}[section]
\newtheorem{theorem}[proposition]{Theorem}
\newtheorem{lemma}[proposition]{Lemma}
\theoremstyle{definition}
\newtheorem{definition}[proposition]{Definition}
\theoremstyle{remark}
\newtheorem{remark}[proposition]{Remark}
\begin{document}

\title[On ${\mathrm{SO}}(3)$-bundles over the Wolf spaces]{On $\SO(3)$-bundles over the Wolf spaces}

\author[M. Fern\'andez]{Marisa Fern\'{a}ndez}
\address{Universidad del Pa\'{\i}s Vasco,
Facultad de Ciencia y Tecnolog\'{\i}a, Departamento de Mate\-m\'a\-ticas,
Apartado 644, 48080 Bilbao, Spain}
\email{marisa.fernandez@ehu.es}

\author[V. Mu\~{n}oz]{Vicente Mu\~{n}oz}
\address{Facultad de Ciencias Matem\'aticas, Universidad
Complutense de Madrid, Plaza de Ciencias
3, 28040 Madrid, Spain}
\email{vicente.munoz@mat.ucm.es}

\author[J. S\'anchez]{Jonatan S\'anchez}
\address{Universidad del Pa\'{\i}s Vasco,
Facultad de Ciencia y Tecnolog\'{\i}a, Departamento de Mate\-m\'a\-ticas,
Apartado 644, 48080 Bilbao, Spain}
\email{jonatan.sanchez@ehu.es}

\subjclass[2010]{Primary 53C25, 55P62, 57N65 Secondary 55S30, 53C26}

\keywords{$3$-Sasakian homogeneous spaces, $3$-sphere bundles, Wolf spaces, formality, Massey products.}

\begin{abstract}
We study the formality of the total space of principal $\SU(2)$ and $\SO(3)$-bundles over a Wolf space,
that is a symmetric positive quaternionic K\"ahler manifold. We apply this to conclude that all  the
$3$-Sasakian homogeneous spaces are formal.  We also determine the
principal $\SU(2)$ and $\SO(3)$-bundles over the Wolf spaces whose total space is non-formal.
\end{abstract}

\maketitle

\section{Introduction}\label{sec:intro}

A Riemannian manifold $(S, g)$ is called {\em $3$-Sasakian manifold} if $S\times{\mathbb{R}}^+$ equipped 
with the cone metric $g^c =  t^2 g+dt^2$ is hyperk\"ahler, and so the holonomy group of $g^c$ is a subgroup of
$\mathrm{Sp}(n+1)$, where $2n+2$ is the complex dimension of the hyperk\"ahler cone, and so
$S$ has odd dimension $4n+3$.
The hyperk\"ahler structure on the cone $(S\times{\mathbb{R}}^+, g^c =  t^2 g\,+\,dt^2)$ induces 
a {\em $3$-Sasakian structure} on the base of the cone. In particular, the triple of
complex structures on $S\times{\mathbb{R}}^+$ gives rise to a triple of Reeb vector fields
$(\xi_1,\xi_2, \xi_3)$ on $S$ whose Lie brackets give a copy of the Lie algebra $\mathfrak{su}(2)$ (see section \ref{$3$-sasaki:formal} for details). 
 
 A $3$-Sasakian manifold $(S, g)$ is said to be {\em regular} if the vector fields $\xi_1, \xi_2, \xi_3$ are complete and 
the corresponding $3$-dimensional foliation is regular, so that the space of leaves is a smooth 
$4n$-dimensional manifold $M$. Ishihara and Konishi \cite{Ishihara-Konishi} noticed that the induced metric on the 
latter is quaternionic K\"ahler with positive scalar curvature, that is $M$ is a {\em positive 
quaternionic K\"ahler manifold}. Conversely \cite{Konishi, Tanno}, starting with a 
positive quaternionic K\"ahler manifold $M$, 
the manifold $M$ can be recovered as the total space of a bundle naturally associated to $M$.

Salamon \cite{Salamon1} proved that compact positive quaternionic K\"ahler manifolds are simply connected 
and their odd Betti numbers are zero. 
Important results on the topology of a compact $3$-Sasakian manifold
were proved by Galicki and Salamon \cite{GS}, showing that the odd Betti numbers
$b_{2i+1}$ of such a manifold of dimension $4n+3$, are all zero for $0\leq i \leq n$.
Moreover, for regular compact $3$-Sasakian manifolds many topological properties are known
(see  \cite[Proposition 13.5.6 and Theorem 13.5.7]{BG}). For example, such a manifold is simply connected unless $N\,=\,{\mathbb{RP}}^{4n+3}$.
Also, using the results of LeBrun and Salamon \cite{LeBrun-Salamon} about the topology of positive quaternionic K\"ahler manifolds,
Boyer and Galicki \cite{BG} show interesting relations among the Betti numbers of regular compact $3$-Sasakian manifolds;
in particular that $b_{2}\,\leq\,1$.

In this paper we deal with homotopical properties of {\em 3-Sasakian homogeneous spaces}. Such a space $S$ is a 3-Sasakian manifold 
with a transitive action of the group of automorphisms of the Sasakian 3-structure (see section \ref{$3$-sasaki:formal} for details). 
The $3$-dimensional foliation on $S$ is regular, and the space of leaves is a homogeneous positive quaternionic K\"ahler manifold, that is a 
symmetric positive quaternionic K\"ahler manifold \cite{Alek}. Such quaternionic K\"ahler manifolds 
are given by  the infinite series ${\mathbb{H}}{\mathbb{P}}^{n}$, ${\mathbb{G}r}_2({\mathbb{C}}^{n+2})$ and 
$\widetilde{\mathbb{G}r}_4({\mathbb{R}}^{n+4})$ 
(the Grassmannian of oriented real 4-planes) and by the exceptional symmetric spaces of compact type
  \begin{align*}
  G I\,=\,\frac{{\mathrm{G}}_2}{{\mathrm{SO}}(4)}, \qquad F I\,=\,\frac{{\mathrm{F}}_4}{{\mathrm{Sp}}(3)\cdot{\mathrm{Sp}}(1)}, \qquad 
  E II\,=\,\frac{{\mathrm{E}}_6}{{\mathrm{SU}}(6)\cdot{\mathrm{Sp}}(1)}, \\  
  E VI\,=\, \frac{{\mathrm{E}}_7}{{\mathrm{Spin}}(12)\cdot{\mathrm{Sp}}(1)} \qquad  \text{and} 
  \qquad E IX\,=\, \frac{{\mathrm{E}}_8}{{\mathrm{E}}_7\cdot{\mathrm{Sp}}(1)},
  \end{align*}
which are part of the E. Cartan classification \cite{Cartan}.
The corresponding $3$-Sasakian homogeneous spaces are given in Theorem \ref{class-homog-3-sasaki}. 
With the exception of the sphere $S^{4n+3}$, which is an $\SU(2)={\mathrm{Sp}}(1)$-bundle over the quaternionic projective space
${\mathbb{H}}{\mathbb{P}}^{n}$, 3-Sasakian homogeneous spaces are principal $\SO(3)$-bundles over the spaces listed above. 
The Euler class $e(S)$ of the $\SO(3)$-bundle is $-\frac14 p_1(S)$, where 
$p_1(S)$ is the first Pontryagin class of the $\SO(3)$-bundle and it is defined by the
quaternionic K\"ahler form of the quaternionic K\"ahler manifold.

A simply connected manifold is formal if its rational homotopy type
is determined by its rational cohomology algebra. We shall say
that $M$ is {\it formal\/} if its minimal model is formal or, equivalently, if the de Rham
complex $(\Omega^*( M), d)$ of $M$ and the algebra of the de Rham cohomology
$(H^*(M), d=0)$ have the same minimal model (see section \ref{min-models} for details).

A celebrated result of Deligne, Griffiths, Morgan and Sullivan states that any compact K\"ahler manifold is formal \cite{DGMS}.
In the same spirit, formality
of a manifold is related to the existence of suitable geometric structures on the
manifold. Amann and Kapovitch \cite{Amann2} have proved that positive quaternionic K\"ahler manifolds are formal.
Many other examples of formal manifolds are known: spheres, projective spaces, compact Lie groups, symmetric spaces 
of compact type and flag manifolds. Nevertheless, there are examples of non-formal homogeneous
spaces (see~\cite{Amann3} and references therein).

For Sasakian manifolds, that is Riemannian manifolds whose cone metric is K\"ahler, the first and second authors 
have proved that the formality is not an obstruction
to the existence of Sasakian structures even on  simply connected manifolds \cite{BFMT}. 
However, in \cite{FIM} it is proved that the formality allows one to distinguish $7$-dimensional
Sasaki-Einstein manifolds which admit $3$-Sasakian structures from those which do not.

Since the aforementioned homogeneous quaternionic spaces are formal, it seems interesting to understand the formality not only of the 
$3$-Sasakian homogeneous spaces but also of the total space of any principal $\SU(2)=S^3$ and any principal $\SO(3)=\RP^3$-bundle over a
homogeneous positive quaternionic K\"ahler manifold.

For a fibration $F \rightarrow E \rightarrow  B$ there are conditions on 
the base $B$ and the fiber $F$ which imply that if $B$ is formal, then $E$ is formal  \cite{Amann2, Lupton}. 
Let us recall that a simply connected topological space $F$ is called {\em positively elliptic}, or
$F_{0}$, if it is rationally elliptic, that is if it has finite dimensional rational homotopy and cohomology,
and if it has positive Euler characteristic. 
In this case, its rational cohomology is concentrated in even degrees only.
For $F_{0}$-spaces, Halperin's conjecture states that if $F$ is such a space, then 
$H^*(F, \mathbb{Q})$ has no negative degree derivations. 
Lupton in \cite{Lupton} proved that if the base space $B$ of the fibration is simply connected, and the fiber $F$ is $F_{0}$ and 
satisfies Halperin's conjecture, then the formality of $B$ implies the formality of $E$ (see also \cite{Amann2}).
Lupton's result can not be applied to $\SO(3)$ nor $\SU(2)$-bundles because the mentioned conditions 
 on the fiber $F$ fail.

The structure of the paper is as follows. First, 
in sections \ref{$3$-sasaki:formal} and \ref{min-models} we review some definitions and results about $3$-Sasakian homogeneous spaces and 
 models (not necessarily minimal) of $\SO(3)$ and $\SU(2)$-fibrations. In particular, we recall the calculation of
the first Pontryagin class of an $\SO(3)$-bundle and the second Chern class of
an $\SU(2)$-bundle (see \eqref{eqn:p1}-\eqref{eqn:pont-chern}).

Section \ref{sect:grassmann1} is devoted to the study of the formality 
of $\SU(2)$ and $\SO(3)$-bundles over the complex Grassmannian ${\mathbb{G}r}_2({\mathbb{C}}^{n+2})$.
Using the concept of $s$-formal minimal model, 
introduced in \cite{FM} as an extension of formality \cite{DGMS}, we determine the principal
$\SU(2)$ and $\SO(3)$-bundles over ${\mathbb{G}r}_2({\mathbb{C}}^{n+2})$ whose total space is formal 
(Theorem \ref{th: 3-sasa-complexgr-1} and Theorem \ref{th: nonformal-complexgr-1}). In
 Theorem \ref{th: nonformal-complexgr-1} we determine
the principal $\SU(2)$ and $\SO(3)$-bundles over ${\mathbb{G}r}_2({\mathbb{C}}^{n+2})$ whose total space is non-formal 
because it has a non-trivial Massey product. In particular, if $n$ is even, we show that there are non-formal $\SU(2)$ and $\SO(3)$-bundles 
over ${\mathbb{G}r}_2({\mathbb{C}}^{n+2})$. The characterization of all these bundles is given by the Euler class of the bundle.
On the other hand, we show the cohomology class of the quaternionic K\"ahler form on ${\mathbb{G}r}_2({\mathbb{C}}^{n+2})$ 
in terms of the generators of the rational cohomology of ${\mathbb{G}r}_2({\mathbb{C}}^{n+2})$
(Proposition \ref{prop:formgrassc}). Then, from Theorem \ref{th: 3-sasa-complexgr-1} we conclude that
the $3$-Sasakian homogeneous space ${\mathrm{SU}}(n+2)/{\mathrm{S}}\big({\mathrm{U}}(n)\times{\mathrm{U}}(1)\big)$ is formal.

In section \ref{sect:grassmann2}, proceeding in the same way as in section \ref{sect:grassmann1}, we determine the principal
$\SU(2)$ and $\SO(3)$-bundles over the oriented real Grassmannian $\widetilde{\mathbb{G}r}_4({\mathbb{R}}^{n+4})$ 
whose total space is formal and also those 
whose total space is non-formal (Theorems \ref{thm:prop:GrRevenformal}, \ref{thm:prop:GrR8formal}
and \ref{thm:prop:formalidadreal-final}).
Nevertheless, this study is more subtle than the one made in section \ref{sect:grassmann1}.
It is due to the fact that the rational cohomology of $\widetilde{\mathbb{G}r}_4({\mathbb{R}}^{n+4})$ changes depending mainly of whether $n$ is even or odd.
In any case, we show that the $3$-Sasakian homogeneous space ${\mathrm{SO}}(n+4) / ({\mathrm{SO}}(n)\times {\mathrm{Sp}}(1))$
is formal. From this result, Theorem \ref{th: 3-sasa-complexgr-1} and the formality of 
the sphere $S^{4n+3}$ and the real projective space ${\mathbb{R}}{\mathbb{P}}^{4n+3}$, we get 
that {\em the non-exceptional $3$-Sasakian homogeneous spaces are formal}.
 The formality of the exceptional $3$-Sasakian homogeneous spaces and, more generally, the formality of the total space 
of $\SU(2)$ and $\SO(3)$-bundles over the exceptional Wolf spaces is proved in section \ref{sect:exceptional}.

A central point in the study of positive quaternionic K\"ahler manifolds is the LeBrun-Salamon conjecture
\cite{LeBrun-Salamon} that says that every positive quaternionic K\"ahler
manifold is a symmetric space. This has been proved by Hitchin in dimension $4$, by
Poon-Salamon in dimension $\leq 8$, and by Herrera and Herrera in dimension $12$.
By Theorem \ref{th:structure}, any compact regular $3$-Sasakian manifold $S$ is an $\SU(2)$ or $\SO(3)$-bundle
over a compact positive quaternionic K\"ahler manifold $M$.
Therefore, our results prove that if $S$ is a compact regular $3$-Sasakian 
manifold of dimension $\leq 15$, then $S$ is formal.

\section{Homogeneous $3$-Sasakian manifolds}\label{$3$-sasaki:formal}

We recall the notion of homogeneous $3$-Sasakian space and the classification theorem of these spaces
following \cite{Blair,BG,BG-2}.

An odd dimensional Riemannian manifold $(S,g)$ is Sasakian if its cone $(S\times{\mathbb{R}}^+, g^c = t^2 g+dt^2)$ is K\"ahler, that
is the cone metric $g^c = t^2 g+dt^2$ admits a compatible integrable almost complex structure $J$ so that
$(S\times{\mathbb{R}}^+, g^c = t^2 g+dt^2, J)$ is a K\"ahler
manifold. In this case the  Reeb vector field $\xi=J\partial_t$ is
a Killing vector field of unit length. The corresponding $1$-form
$\eta$ defined by $\eta(X)=g(\xi,X)$, for any vector field $X$ on
$S$, is a contact form. Let $\nabla$ be the Levi-Civita connection
of $g$. The (1,1) tensor $\phi X=\nabla_X\xi$ satisfies the
identities 
 $$
 \phi^2=-\id+\eta\otimes\xi, \quad g(\phi X,\phi Y)=g(X,Y)-\eta(X)\eta(Y), 
\quad d\eta(X,Y)=2g(\phi X,Y),
 $$
for vector fields $X,Y$ on $S$.

A collection of three Sasakian structures on a
$(4n+3)$-dimensional Riemannian manifold satisfying
quaternionic-like identities form a $3$-Sasakian structure. More
precisely, a Riemannian manifold $(S, g)$ of dimension $4n+3$ is
called $3$-Sasakian if its cone $(S\times{\mathbb{R}}^+, g^c = t^2
g\,+\,dt^2)$ is hyperk\"ahler, that is the metric $g^c = t^2
g\,+\,dt^2$ admits three compatible integrable almost complex
structure $J_s$, $s=1,2,3$, satisfying the quaternionic relations,
i.e.,\ $J_1J_2=-J_2J_1=J_3$, such that $(S\times{\mathbb{R}}^+,
g^c = t^2 g\,+\,dt^2, J_1, J_2, J_3)$ is a hyperk\"ahler manifold.
Equivalently, the holonomy group of the cone metric $g^c$ is a
subgroup of $\mathrm{Sp}(n+1)$. In this case the Reeb vector
fields $\xi_s=J_s\partial_t$, $s=1,2,3$, are Killing vector
fields. The  three Reeb vector fields $\xi_s$, the three
$1$-forms  $\eta_s$ and the three $(1,1)$ tensors $\phi_s$,
$s=1,2,3$, satisfy the relations
\begin{align*}
&\eta_i(\xi_j)=g(\xi_i,\xi_j)=\delta_{ij}, \\
&\phi_i\,\xi_j=-\phi_j\,\xi_i=\xi_k, \\
&\eta_i\circ\phi_j=-\eta_j\circ\phi_i=\eta_k,\\
&\phi_i\circ\phi_j-\eta_j\otimes\xi_i=-\phi_j\circ\phi_i+\eta_i\otimes\xi_j=\phi_k,
\end{align*}
for any cyclic permutation $(i, j, k)$ of $(1, 2, 3)$.

The  Reeb vector fields $\xi_s$ satisfy the following relations $g(\xi_i, \xi_j) = \delta_{ij}$ and 
$[\xi_i,\xi_j]=2\xi_k$. Thus, they span an integrable $3$-dimensional
distribution on a $3$-Sasakian manifold. Denote by ${\mathcal{F}}$ the 3-dimensional foliation generated 
by the  Reeb vector fields $(\xi_1, \xi_2, \xi_3)$. 

If $(S, g)$ is a compact $3$-Sasakian manifold, then the  Reeb vector fields $\xi_s$ are complete and the leaves of
the foliation ${\mathcal{F}}$ are compact. Hence, ${\mathcal{F}}$ is {\em quasi-regular}. The $3$-Sasakian structure on $S$
is said to be {\em regular} if  ${\mathcal{F}}$ is a regular foliation.

The following theorem was first proved by Ishihara \cite{Ishihara} in the regular case. The general version, that we recall here, 
was proved in \cite{BGMa} (see also \cite{BG-2}). First we recall that a $4n$-dimensional $(n>1)$ Riemannian manifold/orbifold is
quaternionic K\"ahler if it has holonomy group contained in $\mathrm{Sp}(n)\mathrm{Sp}(1)$, and a 4-dimensional quaternionic K\"ahler
manifold/orbifold is a self-dual Einstein  Riemannian manifold/orbifold. 

\begin{theorem} [\cite{BG-2}]\label{th:structure}
Let $(S, g)$ be a $3$-Sasakian manifold of dimension $4n + 3$
such that the {Reeb} vector fields $(\xi_1, \xi_2, \xi_3)$ are complete. 
Then the space of leaves $S /{\mathcal{F}}$ has the structure of
 a quaternionic K\"ahler orbifold
 $(\mathcal{O}, g_{\mathcal{O}})$ of dimension $4n$ such that the natural projection
 $\pi\colon S \,\rightarrow\,{\mathcal{O}}$ is a principal orbi-bundle with group $\mathrm{SU}(2)$ or $\mathrm{SO}(3)$,
  and  $\pi$ is a Riemannian orbifold submersion such that the scalar curvature of
$g_{\mathcal{O}}$ is $16n(n + 2)$.
\end{theorem}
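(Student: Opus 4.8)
The plan is to reduce everything to the already-understood regular case via the standard orbifold/Riemannian submersion technology, by carefully analysing the leaf space of the $3$-dimensional foliation $\mathcal{F}$ generated by $(\xi_1,\xi_2,\xi_3)$. First I would observe that completeness of the Reeb vector fields gives, via the integration of the $\mathfrak{su}(2)$-action they span (recall $[\xi_i,\xi_j]=2\xi_k$), a locally free isometric action of $\SU(2)$ (or, when the action is not effective, of its quotient $\SO(3)$) on $S$. Local freeness holds because the $\xi_s$ are linearly independent everywhere, so all isotropy groups are finite. Hence the quotient $\mathcal{O}=S/\mathcal{F}=S/\SU(2)$ (resp.\ $S/\SO(3)$) is a $4n$-dimensional orbifold, and $\pi\colon S\to\mathcal{O}$ is a principal orbi-bundle: the orbifold charts on $\mathcal{O}$ come from slices of the action, and on each such chart $\pi$ looks like the projection $(U\times G)/\Gamma\to U/\Gamma$ for a finite group $\Gamma$.

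Next I would put the metric in place. Since the $\SU(2)$-action is by isometries, the metric $g$ descends to a Riemannian metric $g_{\mathcal{O}}$ on the orbifold $\mathcal{O}$ in the usual way: split $TS=\mathcal{V}\oplus\mathcal{H}$ into the vertical distribution $\mathcal{V}=\langle\xi_1,\xi_2,\xi_3\rangle$ and its orthogonal complement $\mathcal{H}$, and note that $\mathcal{H}$ is invariant and that $g|_{\mathcal{H}}$ is basic; this makes $\pi$ a Riemannian orbifold submersion by construction. The key structural point is then to identify $(\mathcal{O},g_{\mathcal{O}})$ as quaternionic K\"ahler. For $n>1$ this means exhibiting a parallel rank-$3$ subbundle of $\End(T\mathcal{O})$ locally spanned by almost complex structures satisfying the quaternion relations; the natural candidate is built from the $(1,1)$-tensors $\phi_s$ restricted to $\mathcal{H}$, which already satisfy the quaternionic identities listed in the text, descended to $T\mathcal{O}$. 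One then checks $\nabla^{\mathcal{O}}$-parallelism of the span using O'Neill's submersion formulas together with the $3$-Sasakian structure equations (for instance $d\eta_s(X,Y)=2g(\phi_s X,Y)$ and the relations among the $\phi_s,\xi_s,\eta_s$); the scalar-curvature normalisation $16n(n+2)$ then drops out of the O'Neill curvature identities and the fact that each $\xi_s$ is a unit Killing field whose integral curves are geodesics, together with the Einstein condition forced on $g$ itself by the $3$-Sasakian hypothesis. For $n=1$ one instead verifies directly that $g_{\mathcal{O}}$ is self-dual Einstein, again from O'Neill's formulas. The distinction $\SU(2)$ versus $\SO(3)$ is settled by asking whether the central $\mathbb{Z}/2\subset\SU(2)$ acts trivially.

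The main obstacle, and the part that requires the most care, is the local-model analysis near the singular leaves of $\mathcal{F}$: one must verify that the orbifold structure on $\mathcal{O}$ is well-defined and compatible across charts, that $\pi$ is genuinely a principal orbi-bundle (not merely an orbifold submersion), and that the quaternionic K\"ahler structure and the curvature identities, which are most transparent on the regular locus, extend over the orbifold singularities. This is handled by the slice theorem for the $\SU(2)$-action and by the fact that all the tensorial identities are $\SU(2)$-invariant, hence pass to the orbifold charts; but making this rigorous, rather than just citing it, is the substance of the argument. As this result is quoted from \cite{BG-2}, I would present the above as a proof sketch and refer to \cite{BGMa,BG-2} for the full verification of the orbifold bookkeeping.
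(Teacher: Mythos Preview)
The paper does not prove Theorem~\ref{th:structure} at all: it is stated as a result quoted from \cite{BGMa,BG-2} (with the regular case attributed to Ishihara), and no argument is given in the text. Your proposal therefore goes well beyond what the paper does, supplying a coherent proof sketch where the paper simply cites the literature.

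That said, your sketch is essentially the standard argument and is sound as an outline: integrate the $\mathfrak{su}(2)$ spanned by the Reeb fields to a locally free isometric $\SU(2)$ (or $\SO(3)$) action, use the slice theorem to get the orbifold quotient and principal orbi-bundle structure, descend the metric via the horizontal distribution to obtain a Riemannian orbifold submersion, push down the $\phi_s|_{\mathcal H}$ to build the local quaternionic frame, and read off the curvature normalisation from O'Neill's formulas together with the Einstein property of $g$. You also correctly flag the delicate point (the orbifold bookkeeping near singular leaves) and appropriately defer the full details to \cite{BGMa,BG-2}. Since the paper's own ``proof'' is simply the citation, there is no discrepancy to report; if anything, your version is more informative for a reader.
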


\begin{proposition} [\cite{BG-2}]\label{prop: leaves}
Let $(S, g)$ be a $3$-Sasakian manifold such that the
{Reeb} vector fields $(\xi_1, \xi_2, \xi_3)$ are complete.
Denote by ${\mathcal{F}}$ the canonical three dimensional
foliation on $S$. Then,
\begin{enumerate}
\item[i)] The leaves of ${\mathcal{F}}$ are totally geodesic spherical space forms $\Gamma{\backslash}S^3$ of constant curvature
one, where $\Gamma\,\subset \mathrm{Sp}(1) =  \mathrm{SU}(2)$ is a finite subgroup.
\item[ii)] The $3$-Sasakian structure on $S$ restricts to a $3$-Sasakian structure on each leaf.
\item[iii)] The generic leaves are either $\mathrm{SU}(2)$ or $\mathrm{SO}(3)$.
\end{enumerate}
\end{proposition}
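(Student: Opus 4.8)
The plan is to exhibit $\mathcal{F}$ as the orbit foliation of a locally free isometric $\SU(2)$-action on $S$ and then read off the three assertions from the geometry of that action. First I would integrate the Reeb fields: since $g(\xi_i,\xi_j)=\delta_{ij}$ and $[\xi_i,\xi_j]=2\xi_k$ for cyclic $(i,j,k)$, the fields $\xi_1,\xi_2,\xi_3$ are everywhere linearly independent and span a Lie algebra isomorphic to $\mathfrak{su}(2)=\mathfrak{sp}(1)$. As they are complete, Palais' globalization theorem produces an action of the simply connected group $\SU(2)=\mathrm{Sp}(1)=S^3$ on $S$ whose fundamental vector fields are $\xi_1,\xi_2,\xi_3$; this action is isometric (each $\xi_i$ is Killing) and locally free. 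Its orbits are exactly the leaves of $\mathcal{F}$, and since $\SU(2)$ is compact each orbit is a compact embedded submanifold $\SU(2)/\Gamma_p=\Gamma_p\backslash S^3$, where $\Gamma_p\subset\SU(2)$ is the isotropy subgroup at $p$, finite because the action is locally free. In particular the leaves are complete, which gives the first half of i) and re-proves the principal orbi-bundle statement of Theorem \ref{th:structure}.

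Next I would treat total geodesy and the curvature. From $\phi_iX=\nabla_X\xi_i$, together with $\nabla_{\xi_i}\xi_i=0$ and $\phi_i\xi_j=\xi_k$, one gets $\nabla_{\xi_j}\xi_i=\phi_i\xi_j\in\langle\xi_1,\xi_2,\xi_3\rangle$, so the second fundamental form of every leaf vanishes and the leaves are totally geodesic. Consequently the induced metric on a leaf is the one for which $(\xi_1,\xi_2,\xi_3)$ is an orthonormal Killing frame with $[\xi_i,\xi_j]=2\xi_k$, namely the descent to $\Gamma_p\backslash S^3$ of the bi-invariant metric on $\SU(2)$; its sectional curvature on the orthonormal pair $\xi_i,\xi_j$ equals $\tfrac14\|[\xi_i,\xi_j]\|^2=1$, so the leaves are space forms of constant curvature one, completing i). (Alternatively one can compute the leaf's Levi-Civita connection directly from Koszul's formula in the frame $\{\xi_i\}$ and check that its curvature tensor is that of the unit $3$-sphere.) For ii), I would note that $\phi_s$ preserves $V=\langle\xi_1,\xi_2,\xi_3\rangle$ and that $\xi_s,\eta_s$ restrict to a leaf; the algebraic $3$-Sasakian identities are pointwise and survive restriction, while the compatibility $d\eta_s(X,Y)=2g(\phi_sX,Y)$ restricts because pullback commutes with $d$ and the leaf is totally geodesic (so $\nabla^{\mathrm{leaf}}_X\xi_s=\phi_sX$), giving the induced $3$-Sasakian structure on each leaf.

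The delicate point is iii). Let $K=\bigcap_p\Gamma_p$ be the ineffective kernel of the action; it is a closed normal, hence central, subgroup of $\SU(2)$, so $K\in\{\{1\},\{\pm1\}\}$ (it cannot be all of $\SU(2)$, as the orbits are $3$-dimensional), and $G:=\SU(2)/K$ acts effectively. Let $\Gamma$ be the principal isotropy subgroup and $p$ a point on a principal orbit. By the slice theorem the isotropy representation of $\Gamma$ on the normal space to the orbit is trivial, and here that normal space is exactly the horizontal space $H_p=V_p^{\perp}$, so $\gamma_*|_{H_p}=\Id$ for every $\gamma\in\Gamma$. On the other hand, differentiating the equivariance $\gamma_*\xi_i=\sum_jR(\gamma)_{ij}\xi_j$ (with $R\colon\SU(2)\to\SO(3)$ the double cover) and using $\phi_iX=\nabla_X\xi_i$ together with the fact that $\gamma$ is an isometry gives $\gamma_*^{-1}\circ\phi_i\circ\gamma_*=\sum_jR(\gamma^{-1})_{ij}\,\phi_j$ on $T_pS$. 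Restricting this identity to $H_p$, where $\gamma_*=\Id$ and where $\phi_1,\phi_2,\phi_3$ are the linearly independent operators of a quaternionic structure (one assumes $n\ge1$; the case $n=0$ is trivial, $S$ being itself a space form), forces $R(\gamma)=\Id$, i.e. $\gamma\in\ker R=\{\pm1\}$. Hence the principal isotropy is $\{1\}$ or $\{\pm1\}$, so the generic leaf is $S^3=\SU(2)$ or $\RP^3=\SO(3)$, which is iii).

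I expect the main obstacle to be precisely this last step: one must combine the principal-orbit/slice theorem with the interaction between the $\mathrm{Sp}(1)$-action and the quaternionic structure $(\phi_1,\phi_2,\phi_3)$ on the horizontal distribution to rule out larger generic isotropy, and one must keep careful track of the facts that the $\SU(2)$-action need not be effective and that it is completeness of the $\xi_i$, rather than of $g$ itself, that makes the whole construction available.
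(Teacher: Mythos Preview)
The paper does not prove this proposition at all: it is quoted verbatim from \cite{BG-2} (Boyer--Galicki) and stated without proof, as background material in Section~\ref{$3$-sasaki:formal}. So there is no ``paper's own proof'' to compare your attempt against.

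That said, your sketch is a faithful reconstruction of the standard argument and is essentially correct. Integrating the complete Killing fields $\xi_i$ to a locally free isometric $\SU(2)$-action via Palais, identifying leaves with $\Gamma_p\backslash S^3$, and checking total geodesy from $\nabla_{\xi_j}\xi_i=\phi_i\xi_j\in\langle\xi_1,\xi_2,\xi_3\rangle$ is exactly how i) and ii) are proved in the source. Your treatment of iii) is the right idea: principal isotropy acts trivially on the normal slice $H_p$, the equivariance $\gamma_*\xi_i=\sum_j R(\gamma)_{ij}\xi_j$ together with $\phi_i=\nabla_{(\cdot)}\xi_i$ forces $\phi_i=\sum_j R(\gamma)_{ij}\phi_j$ on $H_p$, and linear independence of $\phi_1,\phi_2,\phi_3$ on $H_p$ (for $n\geq 1$) gives $R(\gamma)=\Id$, hence $\gamma\in\{\pm 1\}$. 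One small point worth making explicit is that $\phi_i$ preserves $H_p$ (which follows from $\eta_j\circ\phi_i=\pm\eta_k$ and $\phi_i\xi_i=0$), so that the restriction to $H_p$ in your key identity is legitimate; otherwise the argument is clean.
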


Let $(S, g)$ be a $3$-Sasakian manifold. Then, the isometry group $\mathrm{Iso}(S, g)$ of $(S, g)$ is non-trivial, and it has dimension $\geq 3$
since each Sasakian structure has an isometry group of dimension $\geq 1$.  Denote by $\mathrm{Aut}(S, g)\subset \mathrm{Iso}(S, g)$ 
the subgroup of the isometry group which preserves the $3$-Sasakian structure $(g,\xi_s, \eta_s, \phi_s; s=1,2,3)$ on $S$.
The group $\mathrm{Aut}(S, g)$ can be characterized as follows.

\begin{lemma}[\cite{BG-2}]\label{lem: aut3-sasaki}
Let $(S, g)$ be a $3$-Sasakian manifold, with the $3$-Sasakian structure $(g,\xi_s, \eta_s, \phi_s; s=1,2,3)$,
and let $f\in \mathrm{Iso}(S, g)$. Then, the following conditions
are equivalent
\begin{enumerate}
\item[i)] $f_{\star} \xi_s = \xi_s, \, s= 1, 2, 3$;
\item[ii)] $f^{\star} \eta_s = \eta_s, \, s= 1, 2, 3$; 
\item[iii)] $f_{\star} \circ \phi_s = \phi_s \circ f_{\star}, \, s= 1, 2, 3$; 
\item[iv)] $f\in \mathrm{Aut}(S, g)$.
\end{enumerate}
\end{lemma}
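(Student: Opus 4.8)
The plan is to establish the equivalences $(i)\Leftrightarrow(ii)$ and $(i)\Leftrightarrow(iii)$, and then to observe that, by the very definition of $\mathrm{Aut}(S,g)$, condition $(iv)$ is nothing but the conjunction of $(i)$, $(ii)$ and $(iii)$, hence equivalent to any one of them. Every step uses only that $f$ is an isometry, together with the structure identities $\eta_s=g(\xi_s,\cdot)$, $\phi_s X=\nabla_X\xi_s$, $d\eta_s(X,Y)=2g(\phi_s X,Y)$, and the relations among the $\xi_s$.

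First I would prove $(i)\Leftrightarrow(ii)$. Since $f^\star g=g$, the one-form $f^\star\eta_s=f^\star\big(g(\xi_s,\cdot)\big)$ is the metric dual of $(f^{-1})_\star\xi_s$; as $g$ is nondegenerate, $f^\star\eta_s=\eta_s$ holds exactly when $(f^{-1})_\star\xi_s=\xi_s$, i.e.\ $f_\star\xi_s=\xi_s$. For $(i)\Rightarrow(iii)$: an isometry preserves the Levi-Civita connection, so $f_\star(\nabla_X Y)=\nabla_{f_\star X}(f_\star Y)$; taking $Y=\xi_s$ and invoking $(i)$ gives $f_\star(\phi_s X)=\nabla_{f_\star X}\xi_s=\phi_s(f_\star X)$, which is $(iii)$.

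The only step requiring care is $(iii)\Rightarrow(i)$, which I would carry out in two stages. From $g(\phi_s\xi_s,\phi_s\xi_s)=g(\xi_s,\xi_s)-\eta_s(\xi_s)^2=0$ we get $\phi_s\xi_s=0$, and then $\phi_s^2=-\id+\eta_s\otimes\xi_s$ shows that $\ker\phi_s$ is exactly the line $\RR\xi_s$. Hence if $f_\star$ commutes with $\phi_s$ it maps this line to itself, and since $f$ is an isometry with $|\xi_s|=1$ one obtains $f_\star\xi_s=\varepsilon_s\,\xi_s$ for a locally constant sign $\varepsilon_s\in\{\pm 1\}$. To see $\varepsilon_s=1$ I would compare two expressions for $f^\star(d\eta_s)$: on one hand, using $(iii)$ and the isometry property, $f^\star(d\eta_s)(X,Y)=2g(\phi_s f_\star X,f_\star Y)=2g(f_\star\phi_s X,f_\star Y)=2g(\phi_s X,Y)=d\eta_s(X,Y)$; on the other hand, by the first step $f^\star\eta_s=\varepsilon_s\eta_s$, so $f^\star(d\eta_s)=\varepsilon_s\,d\eta_s$. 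Since $\eta_s$ is a contact form, $d\eta_s\neq 0$, forcing $\varepsilon_s=1$, i.e.\ $(i)$. (Alternatively, $\phi_i\xi_j=\xi_k$ forces the three signs to coincide and then $f_\star[\xi_i,\xi_j]=[f_\star\xi_i,f_\star\xi_j]$ together with $[\xi_i,\xi_j]=2\xi_k$ gives $\varepsilon^2=\varepsilon$.)

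Finally, $\mathrm{Aut}(S,g)$ is by definition the subgroup of $\mathrm{Iso}(S,g)$ preserving $\xi_s$, $\eta_s$ and $\phi_s$ for $s=1,2,3$, so $(iv)$ literally asserts $(i)\wedge(ii)\wedge(iii)$; by the equivalences just established this is the same as $(i)$ alone, completing the argument. I expect the sign normalization in $(iii)\Rightarrow(i)$ to be the main obstacle; the other implications are immediate from $f$ preserving both the metric and the Levi-Civita connection.
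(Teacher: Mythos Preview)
Your argument is correct. Note, however, that the paper does not supply a proof of this lemma at all: it is quoted verbatim from \cite{BG-2} and used as background for the classification of $3$-Sasakian homogeneous spaces, so there is no ``paper's own proof'' to compare against.

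On the mathematics itself, your chain of implications is sound. The equivalence $(i)\Leftrightarrow(ii)$ via the metric duality $\eta_s=g(\xi_s,\cdot)$ and the implication $(i)\Rightarrow(iii)$ via naturality of the Levi-Civita connection under isometries are standard and correctly stated. The implication $(iii)\Rightarrow(i)$ is the only nontrivial one, and you handle it properly: identifying $\ker\phi_s=\RR\xi_s$ from $\phi_s^2=-\id+\eta_s\otimes\xi_s$, deducing $f_\star\xi_s=\varepsilon_s\xi_s$, and then fixing the sign. Both sign arguments work; the one via $f^\star(d\eta_s)=d\eta_s$ versus $f^\star(d\eta_s)=\varepsilon_s\,d\eta_s$ is clean, and the alternative using $\phi_i\xi_j=\xi_k$ together with $[\xi_i,\xi_j]=2\xi_k$ is a nice independent check that genuinely exploits the full $3$-Sasakian (as opposed to merely Sasakian) structure. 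One tacit assumption worth making explicit is connectedness of $S$, needed to pass from ``locally constant'' to ``constant'' for each $\varepsilon_s$; this is harmless in the context of the paper.
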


\begin{definition}\label{def:3-sasa-homogen}
A $3$-Sasakian manifold $(S, g)$ is said to be a {\em $3$-Sasakian homogeneous space} if the group $\mathrm{Aut}(S, g)$ 
acts transitively on $S$. 
\end{definition}

\begin{proposition}[\cite{BG-2}]\label{prop: homog-3-sasaki}
Let $(S, g)$ be a $3$-Sasakian homogeneous space of dimension $4n + 3$. Then,
all leaves are diffeomorphic and ${\mathcal{O}} = S /{\mathcal{F}}$ is a quaternionic K\"ahler manifold 
such that the natural projection  $\pi\colon S \,\rightarrow\,{\mathcal{O}}$ is a locally trivial Riemannian fibration.
Moreover, $\mathrm{Aut}(S, g)$ passes to the quotient and acts transitively on the space of leaves ${\mathcal{O}}$.
\end{proposition}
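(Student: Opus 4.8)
The plan is to realize $\mathcal{F}$ as the orbit foliation of a \emph{free} action of a compact three-dimensional Lie group, and then to read off the statement from the resulting principal bundle together with Theorem~\ref{th:structure}. Since $S$, being homogeneous Riemannian, is complete, the Killing fields $\xi_1,\xi_2,\xi_3$ are complete; as they satisfy $[\xi_i,\xi_j]=2\xi_k$ they span a Lie algebra isomorphic to $\mathfrak{su}(2)$, and Palais' integrability theorem for complete infinitesimal actions produces a smooth action of the simply connected group $\mathrm{Sp}(1)=\mathrm{SU}(2)$ on $S$ whose orbits are exactly the leaves of $\mathcal{F}$. By Lemma~\ref{lem: aut3-sasaki} every $f$ in $G:=\mathrm{Aut}(S,g)$ fixes each $\xi_s$, hence commutes with its flow, so the $G$-action and the $\mathrm{Sp}(1)$-action on $S$ commute.

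Next I would exploit homogeneity to rigidify the $\mathrm{Sp}(1)$-isotropy. For $p\in S$, any $q=f\cdot p$ with $f\in G$, and any $h\in\mathrm{Sp}(1)$, commutativity gives $h\cdot q=q$ if and only if $h\cdot p=p$; since $G$ is transitive, the isotropy group $\Gamma:=\mathrm{Stab}_{\mathrm{Sp}(1)}(p)$ is independent of the point $p$, and it is finite because $\xi_1,\xi_2,\xi_3$ are everywhere orthonormal, hence linearly independent. Comparing $\mathrm{Stab}_{\mathrm{Sp}(1)}(h'\cdot p)=h'\,\Gamma\,(h')^{-1}$ with $\mathrm{Stab}_{\mathrm{Sp}(1)}(h'\cdot p)=\Gamma$ for $h'\in\mathrm{Sp}(1)$ shows that $\Gamma$ is a finite normal subgroup of $\mathrm{SU}(2)$, hence $\Gamma=\{1\}$ or $\Gamma=\{\pm 1\}$. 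In either case the effective action is by the compact group $K:=\mathrm{Sp}(1)/\Gamma$, which is $\mathrm{SU}(2)$ or $\mathrm{SO}(3)$, and this action is free with orbits the leaves of $\mathcal{F}$; in particular all leaves are diffeomorphic to $K$. This is the crux of the argument: a priori the leaves are only spherical space forms $\Gamma_p\backslash S^3$ with $\Gamma_p$ possibly varying (Proposition~\ref{prop: leaves}), and it is precisely the commutation with the transitive group $G$ that forces a single $\Gamma$, makes it central, and thereby makes $\mathcal{F}$ regular.

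Finally, a free action of a compact Lie group is proper, so $\mathcal{O}:=S/\mathcal{F}=S/K$ is a smooth manifold and $\pi\colon S\to\mathcal{O}$ is a principal $K$-bundle, in particular a locally trivial fibration. As $K$ acts by isometries the metric $g$ descends to a metric $g_{\mathcal{O}}$ and $\pi$ is a Riemannian submersion, so Theorem~\ref{th:structure}---whose orbifold quotient is now an honest manifold since $\Gamma$ is $\{1\}$ or $\mathbb{Z}_2$---identifies $(\mathcal{O},g_{\mathcal{O}})$ as a quaternionic K\"ahler manifold (of scalar curvature $16n(n+2)$). Since $G$ commutes with $K$ it permutes the $K$-orbits, hence descends to a smooth action on $\mathcal{O}$ for which $\pi$ is equivariant; transitivity of $G$ on $S$ together with surjectivity of $\pi$ then force $G$ to act transitively on $\mathcal{O}$, which finishes the proof.
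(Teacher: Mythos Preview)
The paper does not supply its own proof of this proposition; it is quoted verbatim from Boyer--Galicki \cite{BG-2} and stated without argument. There is therefore no in-paper proof to compare against.

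That said, your argument is correct and is essentially the standard one. The key steps are all sound: completeness of the $\xi_s$ plus Palais gives the $\mathrm{Sp}(1)$-action; commutation with $G=\mathrm{Aut}(S,g)$ via Lemma~\ref{lem: aut3-sasaki} and transitivity of $G$ force the $\mathrm{Sp}(1)$-isotropy to be a fixed finite \emph{normal} subgroup $\Gamma\subset\mathrm{SU}(2)$, hence $\Gamma\in\{\{1\},\{\pm1\}\}$; freeness of $K=\mathrm{Sp}(1)/\Gamma$ then yields the principal bundle and, through Theorem~\ref{th:structure}, the quaternionic K\"ahler quotient; and commutation again pushes the $G$-action down transitively. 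One small point worth making explicit is that the $\mathrm{Sp}(1)$-action is by isometries (each $\xi_s$ is Killing), which you implicitly use when asserting that the metric descends; otherwise the write-up is complete.
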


Note that a quaternionic K\"ahler manifold  is not necessarily K\"ahler,
as the name might suggest. Moreover, if ${\mathcal{O}}$ is a quaternionic K\"ahler manifold, 
then the scalar curvature $t$ of ${\mathcal{O}} $ is constant since it is Einstein.
Thus, there are three classes of examples of quaternionic K\"ahler manifolds corresponding to $t > 0$, $t=0$ and $t<0$. 

\begin{definition}
A  {\em positive quaternionic K\"ahler manifold} is a quaternionic K\"ahler manifold with
complete metric and with positive scalar curvature.
\end{definition}
We will use also the following properties of quaternionic K\"ahler manifolds.

\begin{theorem}[\cite{Salamon1}]\label{th:1-conn-positive}
Let $M$ be a compact quaternionic K\"ahler manifold of positive
scalar curvature. Then $\pi_{1}(M)=0$ and its odd Betti numbers are zero.
\end{theorem}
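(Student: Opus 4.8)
The plan is to deduce both statements from the geometry of the \emph{twistor space} of $M$. Recall that the twistor space $Z$ of a quaternionic K\"ahler manifold $M^{4n}$ is the fibre bundle over $M$ whose fibre over a point $p$ is the $2$-sphere of compatible almost complex structures on $T_pM$ determined by the quaternionic structure; thus $\pi\colon Z\to M$ is a locally trivial $\CP^1$-bundle. By Salamon's construction \cite{Salamon1} (see also B\'erard--Bergery), $Z$ carries a canonical integrable complex structure, and because the scalar curvature of $M$ is positive, $Z$ admits a K\"ahler--Einstein metric of positive scalar curvature; in particular $Z$ is a Fano manifold of positive Ricci curvature, and the fibres of $\pi$ are rational curves (``twistor lines'') over which the twistor K\"ahler form integrates positively.

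For the fundamental group I would invoke Kobayashi's theorem that a compact K\"ahler manifold of positive Ricci curvature is simply connected, which gives $\pi_1(Z)=0$. Feeding this into the homotopy exact sequence of the fibration $\CP^1\hookrightarrow Z\xrightarrow{\pi}M$,
\[
\pi_1(\CP^1)\longrightarrow \pi_1(Z)\longrightarrow \pi_1(M)\longrightarrow \pi_0(\CP^1),
\]
and using $\pi_1(\CP^1)=\pi_0(\CP^1)=0$, we obtain $\pi_1(M)=0$ (here $M$ is connected, for instance because $Z$ is).

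For the Betti numbers, observe that the restriction $H^2(Z;\RR)\to H^2(\CP^1;\RR)$ is onto, since the twistor K\"ahler class integrates positively over a twistor line; hence $1\in H^0(Z)$ and the K\"ahler class restrict to a basis of $H^*(\CP^1;\RR)$, and the Leray--Hirsch theorem presents $H^*(Z;\RR)$ as the free $H^*(M;\RR)$-module on these two classes. Comparing dimensions in degree $2i+1$ yields
\[
b_{2i+1}(Z)=b_{2i+1}(M)+b_{2i-1}(M),\qquad i\ge 0 .
\]
As the two summands on the right are nonnegative, it suffices to prove $b_{2i+1}(Z)=0$ for all $i\ge 0$.

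This last vanishing is the crux, and the only place where positivity of the scalar curvature is genuinely needed. Since $Z$ is K\"ahler it is enough to show $h^{p,q}(Z)=0$ whenever $p+q$ is odd. Being Fano, $Z$ has $h^{p,0}(Z)=0$ for $p\ge 1$; to reach the remaining off-diagonal Hodge numbers I would use the holomorphic contact structure on $Z$ --- a line bundle $L$ with $L^{\otimes(n+1)}\cong K_Z^{-1}$ and an exact sequence $0\to L^{-1}\to\Omega^1_Z\to D\otimes L^{-1}\to 0$, where $D$ is the contact distribution --- and propagate the vanishing $h^{p,0}=0$ to all $h^{p,q}$ with $p+q$ odd by repeated use of the Kodaira--Nakano vanishing theorem on the exterior powers of this sequence (the positivity of $L$, equivalently of $K_Z^{-1}$, being what makes the vanishing theorem apply). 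An alternative, more differential-geometric route argues directly on $M$: the bundle $\Lambda^\bullet T^*M$ decomposes under the holonomy group $\mathrm{Sp}(n)\mathrm{Sp}(1)$, the Weitzenb\"ock curvature term is strictly positive on the summands occurring in odd total degree when the scalar curvature is positive, and hence $M$ carries no nonzero harmonic form of odd degree. In either form this is the main obstacle; once it is settled, the Leray--Hirsch identity above forces $b_{2i+1}(M)=0$ for every $i$, completing the proof.
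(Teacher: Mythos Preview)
The paper does not supply a proof of this theorem; it is quoted from \cite{Salamon1} and used as a black box. So there is no ``paper's own proof'' against which to compare.

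That said, your outline is the right one and follows Salamon's original argument closely: pass to the twistor space $Z\to M$, use that $Z$ is Fano when the scalar curvature is positive, invoke Kobayashi for $\pi_1(Z)=0$, descend via the homotopy sequence of the $\CP^1$-fibration, and then use Leray--Hirsch to reduce the odd-Betti vanishing on $M$ to that on $Z$. The genuinely incomplete part is exactly the one you flag: the vanishing of all $h^{p,q}(Z)$ with $p+q$ odd. Kodaira--Nakano on $K_Z^{-1}$ and $L$ alone does not directly kill every such Hodge number; Salamon's argument in \cite{Salamon1} instead analyses the $\mathrm{Sp}(n)\mathrm{Sp}(1)$-decomposition of $\Lambda^\bullet T^*M$ and runs a Bochner/Weitzenb\"ock argument on $M$ (your ``alternative route''), which is what actually proves $b_{2i+1}(M)=0$ in one stroke. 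If you want a purely twistor-side proof, you would need to make precise how the contact filtration of $\Omega^p_Z$ combined with Nakano vanishing propagates through all bidegrees --- this is more delicate than your sketch suggests. Either way, your framework is correct; only the ``crux'' step needs to be filled in, and the cleanest reference for that is Salamon's original paper.
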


In order to show a classification of $3$-Sasakian homogeneous spaces, we recall that
any homogeneous positive quaternionic K\"ahler manifold is a symmetric space \cite{Alek}.
These homogeneous manifolds are referred to as {\em Wolf spaces} in recognition of \cite{Wolf}, 
and they are given by the $4n$-dimensional spaces of (real) dimension $4n$
$$
{\mathbb{H}}{\mathbb{P}}^{n}=\frac{{\mathrm{Sp}}(n+1)}{{\mathrm{Sp}}(n)\times 
{\mathrm{Sp}}(1)}, \quad {\mathbb{G}r}_2({\mathbb{C}}^{n+2}), \quad 
\widetilde{\mathbb{G}r}_4({\mathbb{R}}^{n+4})
$$
where $\widetilde{\mathbb{G}r}_4({\mathbb{R}}^{n+4})$ is the Grassmannian of oriented real 4-planes, and by the exceptional symmetric spaces
\begin{align*}
  & G I\,=\,\frac{{\mathrm{G}}_2}{{\mathrm{SO}}(4)}, \qquad F I\,=\,\frac{{\mathrm{F}}_4}{{\mathrm{Sp}}(3)\cdot{\mathrm{Sp}}(1)}, \qquad   
 E II\,=\,\frac{{\mathrm{E}}_6}{{\mathrm{SU}}(6)\cdot{\mathrm{Sp}}(1)}, \\
  &E VI\,=\, \frac{{\mathrm{E}}_7}{{\mathrm{Spin}}(12)\cdot{\mathrm{Sp}}(1)} \qquad  \text{and} \qquad E IX\,=\, \frac{{\mathrm{E}}_8}{{\mathrm{E}}_7\cdot{\mathrm{Sp}}(1)},
 \end{align*}
of real dimension $8$, $28$, $40$, $64$ and $112$, respectively.
This classification and Proposition \ref{prop: homog-3-sasaki} 
imply the following classification theorem of $3$-Sasakian homogeneous spaces.

\begin{theorem}[\cite{BG-2}]\label{class-homog-3-sasaki} 
Let $(S, g)$ be a $3$-Sasakian homogeneous space. Then $S$ is precisely one of the following 
homogeneous spaces:
   \begin{align*}
  &  \frac{{\mathrm{Sp}}(k+1)}{{\mathrm{Sp}}(k)} \cong S^{4k+3}, \quad 
  \frac{{\mathrm{Sp}}(k+1)}{{\mathrm{Sp}}(k)\times {\mathbb{Z}}_2} \cong {\mathbb{R}}{\mathbb{P}}^{4k+3}, \quad 
   \frac{{\mathrm{SU}}(n+2)}{{\mathrm{S}}\big({\mathrm{U}}(n)\times{\mathrm{U}}(1)\big)} , 
   \quad  \frac{{\mathrm{SO}}(m+4)}{{\mathrm{SO}}(m)\times {\mathrm{Sp}}(1)}, \\
   &   \frac{{\mathrm{G}}_2}{{\mathrm{Sp}}(1)} , \qquad \frac{{\mathrm{F}}_4}{{\mathrm{Sp}}(3)}, 
   \qquad \frac{{\mathrm{E}}_6}{{\mathrm{SU}}(6)}, \qquad  
   \frac{{\mathrm{E}}_7}{{\mathrm{Spin}}(12)}, \qquad \frac{{\mathrm{E}}_8}{{\mathrm{E}}_7},
   \end{align*}
where $k\geq 0$, $n \geq 1$ and $m \geq 3$ (the ones in the first line are called non-exceptional, and the ones in the second line
are called exceptional). For the first two cases when $k=0$, ${\mathrm{Sp}}(0)$ is the identity group.
Furthermore, the fiber $F$ over the Wolf space 
is ${\mathrm{Sp}}(1)$ only for $S^{4k+3}$. In all the other cases $F={\mathrm{SO}}(3)$.
\end{theorem}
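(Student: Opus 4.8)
The plan is to descend to the leaf space, identify it with a Wolf space via Alekseevskii's theorem, and then reconstruct $S$ case by case from its principal $\mathrm{SU}(2)$- or $\mathrm{SO}(3)$-bundle structure. First I would fix the setup: since $\mathrm{Aut}(S,g)$ acts transitively, $S$ is a homogeneous, hence complete, Riemannian manifold; by Proposition~\ref{prop: homog-3-sasaki} the projection $\pi\colon S\to\mathcal O:=S/\mathcal F$ is a locally trivial Riemannian fibration onto a homogeneous quaternionic K\"ahler manifold $\mathcal O$, on which $\mathrm{Aut}(S,g)$ acts transitively, and $\mathcal O$ is complete since $\pi$ is a Riemannian submersion from a complete manifold. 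By Theorem~\ref{th:structure}, $\pi$ is a principal bundle with group $\mathrm{SU}(2)$ or $\mathrm{SO}(3)$ and $\mathcal O$ has scalar curvature $16n(n+2)>0$; being quaternionic K\"ahler, $\mathcal O$ is Einstein, hence of positive Ricci curvature, so $\mathcal O$ is compact by Bonnet--Myers and therefore simply connected by Theorem~\ref{th:1-conn-positive}. Consequently $S$ is compact, as a bundle with compact fibre over a compact base.

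Next I would apply the classification of homogeneous positive quaternionic K\"ahler manifolds: by Alekseevskii~\cite{Alek} such a manifold is symmetric, so $\mathcal O$ is one of the Wolf spaces~\cite{Wolf}, namely $\mathbb{HP}^{k}=\mathrm{Sp}(k+1)/(\mathrm{Sp}(k)\times\mathrm{Sp}(1))$, $\mathbb{G}r_2(\mathbb{C}^{n+2})=\mathrm{SU}(n+2)/\mathrm{S}(\mathrm{U}(n)\times\mathrm{U}(2))$, $\widetilde{\mathbb{G}r}_4(\mathbb{R}^{m+4})=\mathrm{SO}(m+4)/(\mathrm{SO}(m)\times\mathrm{SO}(4))$, or one of $GI,FI,EII,EVI,EIX$. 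In each case, writing $\mathcal O=G/H$ with $G$ compact simple, the isotropy group splits as $H=K\cdot L$, where $L$ is the $\mathrm{Sp}(1)$-subgroup realising the $\mathrm{Sp}(1)$-part of the quaternionic holonomy and $K=Z_H(L)$ is its centralizer; the intersection $K\cap L$ is either trivial or the center $\{\pm 1\}$ of $L$.

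Then I would reconstruct $S$. The group $\mathrm{Aut}(S,g)_0=G$ acts transitively on both $S$ and $\mathcal O=G/H$ compatibly with $\pi$, so $S=G/K'$ for a subgroup $K'\subset H$ with $H/K'$ the connected fibre $\mathrm{SU}(2)$ or $\mathrm{SO}(3)$; hence $K'$ has identity component $K$, and $S$ is either $G/K$ or its $\mathbb{Z}_2$-quotient by the center of an $\mathrm{SU}(2)$-fibre. Now $H/K\cong L/(L\cap K)$ equals $\mathrm{SU}(2)$ exactly when $L\cap K=\{1\}$, which by inspection happens only for $H=\mathrm{Sp}(k)\times\mathrm{Sp}(1)$: there $S=\mathrm{Sp}(k+1)/\mathrm{Sp}(k)\cong S^{4k+3}$, and its free $\mathbb{Z}_2$-quotient $\mathrm{Sp}(k+1)/(\mathrm{Sp}(k)\times\mathbb{Z}_2)\cong\mathbb{RP}^{4k+3}$ carries the induced $3$-Sasakian structure with fibre $\mathrm{SO}(3)$. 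In every other case $L$ lies inside the $\mathrm{U}(2)$-, $\mathrm{SO}(4)$- or $\mathrm{Sp}(1)$-factor of $H$ that is centralized by $K$, so $L\cap K=\mathbb{Z}_2$, the fibre is $\mathrm{SO}(3)$, and $S=G/K$ is the listed space --- for instance $\mathrm{SU}(n+2)/\mathrm{S}(\mathrm{U}(n)\times\mathrm{U}(1))$, $\mathrm{SO}(m+4)/(\mathrm{SO}(m)\times\mathrm{Sp}(1))$, $\mathrm{G}_2/\mathrm{Sp}(1)$, and so on --- which is moreover simply connected (using, for the real Grassmannian, that $\pi_1(\mathrm{SO}(m))\to\pi_1(\mathrm{SO}(m+4))$ is an isomorphism). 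Since a $3$-Sasakian structure and its foliation lift to any covering, and since $\mathrm{SO}(3)$ has trivial center while $\mathcal O$ is simply connected, none of these $\mathrm{SO}(3)$-cases admits a further covering or free quotient; hence the only space beyond the $G/K$ above is $\mathbb{RP}^{4k+3}$, and the list is complete, the ranges $k\ge 0$, $n\ge 1$, $m\ge 3$ being chosen to remove low-dimensional isomorphisms among the Wolf spaces.

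The hardest step is the per-space analysis just sketched: for each Wolf space $G/H$ one must pin down the distinguished $\mathrm{Sp}(1)$-subgroup $L$ and its centralizer $K$ inside $H$, decide whether $L\cap K$ is trivial or $\mathbb{Z}_2$, verify that $G$ really is the standard transitive group (or an admissible cover of it), and check that no further covering or free quotient of $G/K$ supports a compatible homogeneous $3$-Sasakian structure --- in particular that $\mathbb{RP}^{4k+3}$ is the unique non-simply-connected example. For the classical series this is an explicit computation with matrix groups, but for the exceptional cases it rests instead on the structure theory of the highest-root $\mathfrak{su}(2)$-triple of $\mathfrak g$, which is where the real care is needed.
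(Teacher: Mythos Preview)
The paper does not give its own proof of this theorem: it is quoted from \cite{BG-2}, with only the one-line remark preceding the statement that the Wolf space classification \cite{Alek,Wolf} together with Proposition~\ref{prop: homog-3-sasaki} implies the result. Your proposal expands exactly that hint --- descend to the leaf space, invoke Alekseevskii to get a Wolf space, then rebuild $S$ as $G/K$ from the $\mathrm{Sp}(1)$-factor of the isotropy --- so there is no methodological difference to compare. Your outline is correct in shape; the only point I would flag is the identification $\mathrm{Aut}(S,g)_0=G$, which you assert rather than justify (one should argue that the transitive action on $\mathcal O$ forces the image of $\mathrm{Aut}(S,g)_0$ to be the full isometry group of the Wolf space and then lift), but this is a standard step in \cite{BG-2} and not a gap in the strategy.
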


Note that Theorem \ref{class-homog-3-sasaki} implies that any homogeneous $3$-Sasakian 
manifold is simply connected with the exception of the real projective space. 
Moreover, the dimension of the $3$-Sasakian homogeneous spaces corresponding to the exceptional Lie groups
are (in the order given in the previous Theorem) $11$, $31$, $43$, $67$ and $115$, respectively.

Let $(S,g)$ be a $3$-Sasakian regular manifold, and let 
$(M, g_{M})$ be the quaternionic K\"ahler manifold given as the space of
orbits. Then, Theorem \ref{th:structure} implies that there is a principal fiber bundle 
$F\,\rightarrow \,S\, \to\, M$ with $F=\SU(2)$ or $F=\SO(3)$.

Suppose first that $F=\SO(3)$. By formulas (3.6) and (3.7) of \cite{Konishi}, the principal fiber bundle $S \, \to \, M$ has a natural connection whose curvature can be described locally in terms of the quaternionic K\"ahler  structure  
on $M$. Indeed, locally on $M$ we have
three complex structures $J_1,J_2,J_3$  (with $J_3=J_{1}J_{2}=-J_{2}J_{1}$) each of them compatible with 
the Riemannian metric $g_{M}$, and with corresponding  (local) K\"ahler forms $\omega_1,\omega_2,\omega_3$. The
$\SO(3)$-bundle has a bundle of Lie algebras $\mathfrak{so}(3)$ with a frame $e_1,e_2,e_3$, and the curvature of
the natural connection is given by  $R=\omega_1\,e_1+\omega_2\, e_2+ \omega_3\, e_3$. The quaternionic K\"ahler
form is the 4-form $\Omega$ on $M$ given as 
 $$ 
\Omega=\tr (R\wedge R)= \omega_1\wedge \omega_1+\omega_2\wedge \omega_2+\omega_3\wedge \omega_3\,  .
 $$
This form is the representative of the first Pontryagin class 
 \begin{equation}\label{eqn:p1}
  p_1(S)=[\tr(R\wedge R)]=[\Omega]\in H^4(M,\ZZ). 
 \end{equation}

In the case that the principal fiber bundle $F \,\rightarrow \,S\,\to M$ has fiber $F=\SU(2)$, 
we can take the associated $\SO(3)$-bundle via the epimorphism $\SU(2)\to \SO(3) = \mathrm{SU}(2)/\mathbb{Z}_2$, equivalently
quotient by the center $\ZZ_2$. In this way we obtain another $3$-Sasakian manifold $S'=S/\ZZ_2$ with a fiber
bundle $F'=\SO(3)\,\rightarrow \,S' \,\to M$. 
The $\SU(2)$-bundle $S$ is characterised by the Euler class
$e(S)$ of the fibration, that is the second Chern class $c_2(S) \in H^4(S,\ZZ)$
and it is given by
  \begin{equation}\label{eqn:c2}
  e(S)=c_2(S) = - \frac{1}{4}[\Omega]. 
  \end{equation}
This can be proved as follows. Let $E\to M$ be the rank $2$ complex 
vector bundle associated to $S$, then $\End(E)\to M$ consisting of skew-hermitian
endomorphisms, is the rank $3$ real vector bundle associated to $S'$. Then $[\Omega]=p_1(S')=p_1(\End (E))= - c_2(\End(E)\ox \CC)=
-c_2(\End_\CC(E))=-c_2(E\ox E^*)$. To compute this, let $x_1,x_2$ be the Chern roots of $E$. So the Chern roots
of $E\ox E^*$ are $1, x_1-x_2$ and $x_2-x_1$.
So $[\Omega]=-c_2(E\ox E^*)=(x_1-x_2)^2= c_1(E)^2-4c_2(E)=-4c_2(E)=-4c_2(S)$ (see \cite{Milnor-Stasheff}).

Finally, for later use, we compute the Pontryagin class in the following situation. 
Suppose that we have a principal $F=\SO(3)$-bundle $S\to M$, and that there is a lifting to an $\U(2)$-bundle under the
epimorphism $\U(2)\to \SO(3)$, say 
$\widetilde{S} \to M$. The rank $2$ complex vector bundle $E\to M$ associated to 
$\widetilde{S}$ has Chern classes $c_1(E), c_2(E)$. The rank $3$ real vector  bundle associated to 
$S$ is $\End(E)\to M$, consisting of skew-hermitian
endomorphisms. Then,
 \begin{equation} \label{eqn:pont-chern}
p_1(S)=-c_2(E\ox E^*)=c_1(E)^2-4c_2(E)=c_1(\widetilde{S})^2-4c_2(\widetilde{S}). 
\end{equation}
For the Euler class $e(S)$ of the $\SO(3)$-bundle we have
\begin{equation} \label{eqn:eulerclassSO(3)}
e(S)=-\frac14 p_1(S).
\end{equation}

\section{Minimal models and formal manifolds}\label{min-models}

In this section we review some definitions and results about minimal models
and Massey products on smooth manifolds (see
\cite{DGMS, FHT, FM} for more details).

We work with {\em differential graded commutative algebras}, or DGAs,
over the field $\mathbb R$ of real numbers. The degree of an
element $a$ of a DGA is denoted by $|a|$. A DGA $(\mathcal{A},\,d)$ is said to be {\it minimal\/} if:
\begin{enumerate}
 \item $\mathcal{A}$ is free as an algebra, that is $\mathcal{A}$ is the free
 algebra $\bigwedge V$ over a graded vector space $V\,=\,\bigoplus_i V^i$, and

 \item there is a collection of generators $\{a_\tau\}_{\tau\in I}$
indexed by some well ordered set $I$, such that
 $|a_\mu|\,\leq\, |a_\tau|$ if $\mu \,< \,\tau$ and each $d
 a_\tau$ is expressed in terms of the previous $a_\mu$, $\mu\,<\,\tau$.
 This implies that $da_\tau$ does not have a linear part.
 \end{enumerate}

In our context, the main example of DGA is the de Rham complex $(\Omega^*(M),\,d)$
of a smooth manifold $M$, where $d$ is the exterior differential.

The cohomology of a differential graded commutative algebra $(\mathcal{A},\,d)$ is
denoted by $H^*(\mathcal{A})$. This space is
naturally a DGA with the product inherited from that on $\mathcal{A}$ while the differential on
$H^*(\mathcal{A})$ is identically zero. A DGA $(\mathcal{A},\,d)$ is  {connected} if 
$H^0(\mathcal{A})\,=\,\RR$, and it is {$1$-connected} if, in 
addition, $H^1(\mathcal{A})\,=\,0$.

Morphisms between DGAs are required to preserve the degree and to commute with the 
differential. We say that $(\bigwedge V,\,d)$ is a {\it minimal model} of a 
differential graded commutative algebra $(\mathcal{A},\,d)$ if $(\bigwedge V,\,d)$ 
is minimal and there exists a quasi-isomorphism, that is a morphism of differential graded algebras 
  $$
  \rho\,\colon\, {(\bigwedge V,\,d)}\,\longrightarrow\, {(\mathcal{A},\,d)}
 $$ 
inducing an isomorphism $\rho^*\,\colon\, H^*(\bigwedge 
V)\,\stackrel{\sim}{\longrightarrow}\, H^*(\mathcal{A})$ of cohomologies. 
A connected differential graded algebra has a minimal model unique up to isomorphism \cite{Halperin} (see
 \cite{DGMS,GM,Su} for the $1$-connected case).

A {\it minimal model\/} of a connected smooth manifold $M$
is a minimal model $(\bigwedge V,\,d)$ for the de Rham complex
$(\Omega^*(M),\,d)$ of differential forms on $M$. If $M$ is a simply
connected manifold, then the dual of the real homotopy vector
space $\pi_i(M)\otimes \RR$ is isomorphic to the space $V^i$ of generators in degree $i$, for any $i$.
The latter also happens when $i\,>\,1$ and $M$ is nilpotent, that
is, the fundamental group $\pi_1(M)$ is nilpotent and its action
on $\pi_j(M)$ is nilpotent for all $j\,>\,1$ (see~\cite{DGMS}).

We say that a DGA $(\mathcal{A},\,d)$ is a {\it model} of a manifold $M$
if $(\mathcal{A},\,d)$ and $M$ have the same minimal model. Thus, if $(\bigwedge V,\,d)$ is the minimal
model of $M$, we have 
$$
 (\mathcal{A},\,d)\, \stackrel{\nu}\longleftarrow\, {(\bigwedge V,\, d)}\, \stackrel{\rho}\longrightarrow\, (\Omega^{*}(M),\,d),
$$
where $\rho$ and $\nu$ are quasi-isomorphisms.

A minimal algebra $(\bigwedge V,\,d)$ is {\it formal} if there exists a
morphism of differential algebras $\psi\,\colon\, {(\bigwedge V,\,d)}\,\longrightarrow\,
(H^*(\bigwedge V),\,0)$ inducing the identity map on cohomology.
A DGA $(\mathcal{A},d)$ is formal if its minimal model is formal.
A smooth manifold $M$ is {formal} if its minimal model is
formal. Many examples of formal manifolds are known: spheres, projective
spaces, compact Lie groups, symmetric spaces, flag manifolds,
and compact K\"ahler manifolds. Recently, in \cite{Amann2} it is proved the following 

\begin{theorem}[\cite{Amann2}]\label{th:formalquat}
Compact positive quaternionic K\"ahler manifolds are formal.
\end{theorem}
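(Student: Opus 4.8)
The plan is to descend formality from the twistor space of $M$. Write $Z$ for the twistor space of the positive quaternionic K\"ahler manifold $M^{4n}$, that is the unit sphere bundle $Z=S(\mathcal{S})\to M$ of the canonical rank-$3$ subbundle $\mathcal{S}\subset\End(TM)$ locally spanned by a compatible almost-hypercomplex triple; its fibre is $S^2=\CP^1$. By Salamon's twistor construction $Z$ carries a natural integrable complex structure, and since the scalar curvature of $M$ is positive this makes $Z$ a Fano manifold, in particular a compact (simply connected) K\"ahler manifold; hence $Z$ is formal by \cite{DGMS}. The task is then to transfer formality from $Z$ to $M$, even though formality does not pass from total spaces to bases in general.

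First I would replace the fibration $Z\to M$ by a rational homotopy equivalence. The fibrewise map $-\id\colon\mathcal{S}\to\mathcal{S}$ restricts on each $S^2$-fibre to the antipodal map, hence defines a free $\ZZ_2$-action $\tau$ on $Z$; the quotient $X:=Z/\tau$ is a closed manifold, namely the projectivised bundle $\mathbb{P}(\mathcal{S})\to M$, an $\RP^2$-bundle over $M$. Since $H^{>0}(\RP^2;\QQ)=0$, the Leray--Serre spectral sequence of $\RP^2\to X\xrightarrow{\,q\,}M$ collapses and $q^{\ast}\colon H^{\ast}(M;\QQ)\to H^{\ast}(X;\QQ)$ is an isomorphism; equivalently $q^{\ast}\colon(\Omega^{\ast}(M),d)\to(\Omega^{\ast}(X),d)$ is a quasi-isomorphism, so $M$ and $X$ have the same minimal model. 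As $M$ is simply connected by Theorem \ref{th:1-conn-positive}, it therefore suffices to prove that $X$ is formal.

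Second I would push formality from $Z$ down to the quotient $X$. Because $\tau$ acts freely, de Rham theory gives $(\Omega^{\ast}(X),d)=(\Omega^{\ast}(Z),d)^{\ZZ_2}$ and $H^{\ast}(X)=H^{\ast}(Z)^{\ZZ_2}$. Over a field of characteristic zero, taking invariants under a finite group is an exact functor that carries quasi-isomorphisms of $G$-DGAs to quasi-isomorphisms; applying it to a $\ZZ_2$-equivariant formalising zig-zag $(\Omega^{\ast}(Z),d)\xleftarrow{\ \sim\ }\mathcal{M}\xrightarrow{\ \sim\ }(H^{\ast}(Z),0)$ --- which can be built $\ZZ_2$-equivariantly, since for a finite group both Sullivan's minimal model and the bigraded formality retraction admit equivariant constructions --- yields $(\Omega^{\ast}(X),d)\xleftarrow{\ \sim\ }\mathcal{M}^{\ZZ_2}\xrightarrow{\ \sim\ }(H^{\ast}(X),0)$. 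Hence $X$ is formal, and therefore so is $M$.

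The two inputs are: (i) the geometric fact that the twistor space is K\"ahler, which is classical quaternionic K\"ahler theory and is used as a black box; and (ii) the assertion that the invariant subalgebra of a formal $G$-DGA is formal for finite $G$ in characteristic zero. I expect (ii) to be the real technical point: one must check that the formalising quasi-isomorphisms can genuinely be chosen $\ZZ_2$-equivariantly, which follows from the equivariant form of the minimal-model machinery but deserves to be spelled out. An alternative, fibration-theoretic route would avoid the quotient and descend formality directly along the Hirsch extension $\mathcal{M}_Z=\mathcal{M}_M\otimes\bigl(\bigwedge(a,b),\,db=a^2-\pi^{\ast}\omega\bigr)$ modelling $\CP^1\to Z\to M$ (with $\omega$ a cocycle representing a multiple of the quaternionic K\"ahler $4$-form $\Omega$), using that the fibre $\CP^1$ is positively elliptic and satisfies Halperin's conjecture; but analysing that extension is more delicate than the quotient argument above.
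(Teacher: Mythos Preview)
The paper does not give its own proof of this statement: Theorem~\ref{th:formalquat} is quoted verbatim from \cite{Amann2} and used as a black box throughout, so there is nothing in the paper to compare your argument against line by line.

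That said, your strategy is sound and is essentially one of the known routes to the result. The twistor space $Z$ of a positive quaternionic K\"ahler manifold is K\"ahler (indeed Fano), hence formal by \cite{DGMS}; and the antipodal involution $\tau$ on the $S^2$-fibres is a free smooth $\ZZ_2$-action whose quotient $X=\mathbb{P}(\mathcal{S})$ is an $\RP^2$-bundle over $M$, hence rationally equivalent to $M$ via $q^*$. Those steps are correct.

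You are right that the only nontrivial point is (ii): passing formality from $Z$ to $Z/\ZZ_2$. Your sketch (``build the formalising zig-zag $\ZZ_2$-equivariantly and take invariants'') is the correct idea, and it does go through for finite groups in characteristic zero, but it is not automatic. Non-equivariant formality of $Z$ gives you a quasi-isomorphism $\psi\colon\mathcal{M}_Z\to H^*(Z)$ that need not respect $\tau$; one must redo the obstruction theory for formality (e.g.\ via the bigraded model of Halperin--Stasheff) equivariantly, using that $\QQ[\ZZ_2]$ is semisimple to average the choices at each stage. This is where the actual work lies, and a complete write-up would have to spell it out or cite an explicit source (equivariant minimal models \`a la Triantafillou, or the statement that finite quotients of formal spaces are formal). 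Your proposal flags this honestly, but as written it is an outline rather than a proof.

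For comparison, the approach of Amann--Kapovitch in \cite{Amann2} is closer to the ``alternative route'' you mention at the end: they work directly with the $\CP^1$-fibration $Z\to M$, exploit that $\CP^1$ is an $F_0$-space satisfying Halperin's conjecture, and prove a general transfer theorem (formality of the total space of such a fibration is equivalent to formality of the base). Your quotient argument trades the analysis of that Hirsch extension for the equivariant descent step; both are legitimate, and neither is obviously shorter once all details are filled in.
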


\begin{remark}
Note that there are examples of non-formal homogeneous
spaces (see~\cite{Amann3} and references therein). Amann has proved  \cite{Amann3} that in 
every dimension $\geq 72$, there is an irreducible simply connected compact homogeneous space which is not formal. 
\end{remark}

The formality property of a minimal algebra is characterized as follows.

\begin{theorem}[\cite{DGMS}]\label{prop:criterio1}
A minimal algebra $(\bigwedge V,\,d)$ is formal if and only if the space $V$
can be decomposed into a direct sum $V\,=\, C\oplus N$ with $d(C) \,=\, 0$,
$d$ is injective on $N$ and such that every closed element in the ideal
$I(N)$ generated by $N$ in $\bigwedge V$ is exact.
\end{theorem}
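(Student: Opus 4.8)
This is an equivalence, and I would prove the two implications separately. The implication ``a decomposition exists $\Rightarrow$ $(\bigwedge V,d)$ is formal'' is constructive and short, whereas ``formal $\Rightarrow$ the decomposition exists'' carries the real content.

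$(\Leftarrow)$ Assume $V=C\oplus N$ with $dC=0$, with $d$ injective on $N$, and with every closed element of the ideal $I(N)$ exact. I would exhibit the formalising morphism $\psi\colon(\bigwedge V,d)\to(H^*(\bigwedge V),0)$ directly: set $\psi(c)=[c]$ for $c\in C$ (legitimate since $dc=0$), set $\psi(n)=0$ for $n\in N$, and extend multiplicatively; then $\psi$ vanishes on $I(N)$ and restricts to $p\mapsto[p]$ on $\bigwedge C$. The verification rests on one elementary observation, which I would isolate as a lemma: in the vector-space decomposition $\bigwedge V=\bigwedge C\oplus I(N)$, write a cocycle $\alpha=p+q$ with $p\in\bigwedge C$, $q\in I(N)$; as $\bigwedge C$ consists of cocycles, $dp=0$, hence $dq=0$, hence $q$ is exact by hypothesis, hence $p=\alpha-q$ represents the class of $\alpha$. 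Therefore $\psi(\alpha)=\psi(p)=[p]=[\alpha]$ for every cocycle $\alpha$. Applied to a coboundary $\alpha=d\beta$ this gives $\psi(d\beta)=[d\beta]=0$, so $\psi$ is a chain map (and it is multiplicative by construction); applied to a general cocycle it shows that $\psi$ induces the identity on cohomology. Hence $(\bigwedge V,d)$ is formal.

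$(\Rightarrow)$ Assume $(\bigwedge V,d)$ is formal. The plan is to replace it by a canonical model. A formalising morphism is in particular a quasi-isomorphism, so $(\bigwedge V,d)$ is a minimal model of $(H,0)$, where $H:=H^*(\bigwedge V)$; by uniqueness of minimal models it is isomorphic to the bigraded Halperin--Stasheff minimal model of $(H,0)$, and since the three conditions in the statement are invariant under isomorphisms of minimal algebras I may assume $(\bigwedge V,d)$ \emph{is} that bigraded model. It carries a lower grading $V=\bigoplus_{p\ge0}V_p$ with $d(V_0)=0$, $d(V_p)\subseteq\bigwedge(V_{<p})$ for $p\ge1$, with $d$ injective on $\bigoplus_{p\ge1}V_p$, and with a structural quasi-isomorphism $\psi_0\colon\bigwedge V\to H$ that sends $V_0$ isomorphically onto a space of indecomposable generators of $H$ and vanishes on $\bigoplus_{p\ge1}V_p$. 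I would then take $C=V_0$ and $N=\bigoplus_{p\ge1}V_p$: the conditions $dC=0$ and ``$d$ injective on $N$'' hold by construction, and if $\alpha\in I(N)$ is a cocycle then $\psi_0(\alpha)=0$, because $\psi_0$ is multiplicative and kills $N$; since $\psi_0$ is a quasi-isomorphism this forces $[\alpha]=0$, i.e.\ $\alpha$ is exact.

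The main obstacle is the model-theoretic input invoked in the last step: the existence of a minimal model of $(H,0)$ whose generators split into a set with $d=0$ mapping isomorphically to the indecomposables of $H$ and a complementary set on which $d$ is injective and which lies entirely in the kernel of the structural morphism. I would either quote this (Halperin--Stasheff, or the original argument of \cite{DGMS}) or build it by induction on the lower degree $p$: having constructed $V_{<p}$, adjoin generators $V_p$ with $d$ an injection of $V_p$ into the cocycles of $\bigwedge(V_{<p})$ chosen so as to kill exactly the spurious cohomology produced in word length $<p$; the delicate point is to check that these new generators can always be chosen inside $\ker\psi_0$ while keeping $d|_{V_p}$ injective. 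The rest of the argument is purely formal.
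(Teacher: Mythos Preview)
The paper does not prove this theorem; it is quoted verbatim from \cite{DGMS} and used as a black box (the paper's own arguments rely instead on the weaker notion of $s$-formality from \cite{FM}). So there is no ``paper's own proof'' to compare against.

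That said, your outline is correct and is essentially the standard argument. In the $(\Leftarrow)$ direction your construction of $\psi$ is exactly right, and the key observation --- that in the splitting $\bigwedge V=\bigwedge C\oplus I(N)$ the $I(N)$-component of any cocycle is itself a cocycle, hence exact --- is precisely what makes $\psi$ both a chain map and a cohomology identity. In the $(\Rightarrow)$ direction you correctly identify that the real content lies in the existence of the bigraded (Halperin--Stasheff) minimal model of $(H,0)$, and you give an honest account of the inductive construction and its delicate point (keeping the new generators inside $\ker\psi_0$ while $d$ remains injective on them). The original \cite{DGMS} argument is phrased slightly differently --- it works by induction on degree, lifting a splitting of $V^i$ and adjusting by coboundaries --- but it is equivalent to what you sketch. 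One small suggestion: in the $(\Rightarrow)$ direction you invoke ``invariance of the three conditions under isomorphisms of minimal algebras''; this is true but worth a sentence, since an isomorphism need not carry a given $C\oplus N$ splitting to one of the same type --- what you actually use is that \emph{some} splitting exists on one model iff it exists on any isomorphic one, which follows by transporting $C$ and $N$ through the isomorphism.
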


This characterization of formality can be weakened using the concept of
$s$-formality introduced in \cite{FM}.

\begin{definition}\label{def:primera}
A minimal algebra $(\bigwedge V,\,d)$ is $s$-formal
($s > 0$) if for each $i\,\leq\, s$
the space $V^i$ of generators of degree $i$ decomposes as a direct
sum $V^i\,=\,C^i\oplus N^i$, where the spaces $C^i$ and $N^i$ satisfy
the following conditions:
\begin{enumerate}

\item $d(C^i) = 0$,

\item the differential map $d\,\colon\, N^i\,\longrightarrow\, \bigwedge V$ is
injective, and

\item any closed element in the ideal
$I_s=I(\bigoplus\limits_{i\leq s} N^i)$, generated by the space
$\bigoplus\limits_{i\leq s} N^i$ in the free algebra $\bigwedge
(\bigoplus\limits_{i\leq s} V^i)$, is exact in $\bigwedge V$.
\end{enumerate}
\end{definition}

A smooth manifold $M$ is $s$-formal if its minimal model
is $s$-formal. Clearly, if $M$ is formal then $M$ is $s$-formal for every $s\,>\,0$.
The main result of \cite{FM} shows that sometimes the weaker
condition of $s$-formality implies formality.

\begin{theorem}[\cite{FM}]\label{fm2:criterio2}
Let $M$ be a connected and orientable compact differentiable
manifold of dimension $2n$ or $(2n-1)$. Then $M$ is formal if and
only if it is $(n-1)$-formal.
\end{theorem}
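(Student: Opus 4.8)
\emph{Plan of proof.} The implication ``formal $\Rightarrow$ $(n-1)$-formal'' is immediate from the definitions: a formality decomposition $V=C\oplus N$ of the minimal model as in Theorem~\ref{prop:criterio1} restricts, in each degree $i$, to the data $V^i=C^i\oplus N^i$ required by Definition~\ref{def:primera} for every $s>0$. Conversely, assume $M$ is $(n-1)$-formal with $m:=\dim M\in\{2n-1,2n\}$; I will verify the criterion of Theorem~\ref{prop:criterio1}. Let $(\bigwedge V,d)$ be the minimal model of $M$ and $\rho\colon(\bigwedge V,d)\to(\Omega^*(M),d)$ a quasi-isomorphism. Set $C^i=V^i\cap\ker d$ for all $i$; for $i\le n-1$ let $N^i$ be the complement given by $(n-1)$-formality, and for $i\ge n$ let $N^i$ be any complement of $C^i$ in $V^i$ (so $d|_{N^i}$ is injective). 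With $C=\bigoplus_iC^i$ and $N=\bigoplus_iN^i$, it remains to show that every homogeneous closed $z\in I(N)$ is exact, and I proceed by the degree $k=|z|$.

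If $k\le n-1$, then every monomial of $z$ involves only generators of degree $\le n-1$, so the $N$-factor it contains lies in $\bigoplus_{i\le n-1}N^i$; hence $z$ lies in the ideal $I_{n-1}$ of Definition~\ref{def:primera} and is exact by condition~(3). If $k>m$, then $H^k(\bigwedge V)\cong H^k(M)=0$ and $z$ is exact. For $n\le k\le m$ I reduce to the top degree by Poincar\'e duality: given such a $z$ and any $\beta\in H^{m-k}(M)$, choose a cocycle $w\in(\bigwedge V)^{m-k}$ with $[\rho w]=\beta$; then $zw$ is closed, lies in the ideal $I(N)$, and has degree exactly $m$, so granting the top-degree case we get $[\rho z]\smile\beta=[\rho(zw)]=0$, and since the cup product $H^k(M)\otimes H^{m-k}(M)\to H^m(M)\cong\RR$ is a perfect pairing ($M$ being connected, compact and orientable) this forces $[\rho z]=0$, i.e.\ $z$ is exact. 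Everything is thus reduced to the statement: a closed $z\in I(N)$ with $|z|=m=\dim M$ is exact.

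For this remaining case I would first record that $(n-1)$-formality yields a DGA morphism $\psi\colon(\bigwedge V^{\le n-1},d)\to(H^*(M),0)$ with $\psi(c)=[\rho c]$ on $C^{\le n-1}$ and $\psi=0$ on $N^{\le n-1}$; it is a chain map because, writing $dv=\mu+\nu$ for $v\in N^i$ ($i\le n-1$) with $\mu$ the part of $dv$ having no $N$-factor and $\nu\in I(N^{\le n-1})$, one has $d\mu=0$, hence $\nu$ is closed and so exact by condition~(3), hence $\mu=d v-\nu$ is exact too and $\psi(dv)=[\rho\mu]=0$. Now the constraint $m\le 2n$ becomes decisive: a degree-$m$ monomial of $\bigwedge V$ contains at most two generators of degree $\ge n$, and two only when $m=2n$ and both have degree exactly $n$. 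I would split $z=z_{(0)}+z_{(1)}+z_{(2)}$ according to this number of ``high'' generator factors; membership of $z$ in $I(N)$ forces the vanishing of the monomials of $z$ with no $N$-factor, which pins down $z_{(0)}\in I(N^{\le n-1})$ and the low-degree coefficients of the high generators occurring in $z_{(1)}$ and $z_{(2)}$; decomposing the equation $dz=0$ by the same grading then expresses the differentials of those high generators, and the defect of $z_{(0)}$ from being closed, in terms of cochains lying in $\bigwedge V^{\le n-1}$, to which $\psi$ and Poincar\'e duality — now pairing each high generator of degree $j\ge n$ against the complementary degree $m-j\le n$ — can be applied to conclude $[\rho z]=0$.

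The main obstacle is precisely this last step: controlling closed elements of $I(N)$ in the top degree $m$ (equivalently, after the Poincar\'e-duality reduction, also the middle degree $k=n$ when $m=2n$), where one must disentangle the part of the cochain living in the ``formal'' range of degrees $\le n-1$ from the contributions of the generators of degree $\ge n$. The technically heaviest point is the bookkeeping of the differentials of those high-degree generators — whose own ``number of high factors'' is awkward near the boundary degree $n$, and worse still if $V^1\neq 0$ — and it is exactly here that the numerical hypothesis $m\le 2n$ (so that no top-degree monomial can involve three independent high generators) is what allows the argument to close.
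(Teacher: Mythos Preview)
The paper does not prove Theorem~\ref{fm2:criterio2}; it merely quotes it from \cite{FM}. There is therefore no in-paper proof to compare against, and your sketch is in fact an attempt at reproducing the argument of the original reference.

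Your overall strategy is the right one and matches that of \cite{FM}: the easy implication is trivial, and for the converse one uses Poincar\'e duality to reduce the exactness condition of Theorem~\ref{prop:criterio1} for a closed $z\in I(N)$ of degree $k$ to the single top degree $k=m=\dim M$, and then exploits the numerical constraint $m\le 2n$ to bound the number of generators of degree $\ge n$ that can appear in a top-degree monomial. The construction of the partial morphism $\psi\colon(\bigwedge V^{\le n-1},d)\to(H^*(M),0)$ and your verification that it is a chain map are also correct and are exactly the tools used in \cite{FM}.

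That said, your proposal is explicitly incomplete. You reduce everything to the claim that a closed $z\in I(N)$ of degree $m$ is exact, and then only describe in words how you \emph{would} split $z=z_{(0)}+z_{(1)}+z_{(2)}$ and apply $\psi$ together with Poincar\'e duality. The actual computation --- tracking $dz=0$ through this splitting, showing that $\psi$ kills the relevant low-degree pieces, and handling the delicate case $z_{(2)}\neq 0$ when $m=2n$ (where two degree-$n$ generators interact), as well as the complications when $V^1\neq 0$ --- is not carried out. You have correctly located the crux of the argument and the reason the hypothesis $m\le 2n$ is essential, but the ``main obstacle'' you yourself name remains an obstacle: what you have is an accurate road map, not a proof.
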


One can check that any simply connected compact manifold 
is $2$-formal. Therefore, Theorem \ref{fm2:criterio2} implies that any
simply connected compact manifold of dimension at most six
is formal. (This result was proved earlier in \cite{N-Miller}.)

In order to detect non-formality, instead of computing the minimal
model, which is usually a lengthy process, one can use Massey
products, which are obstructions to formality. The simplest type
of Massey product is the triple Massey
product, which is defined as follows. Let $(\mathcal{A},\,d)$ be a DGA (in particular, it can be the de Rham complex
of differential forms on a smooth manifold). Suppose that there are
cohomology classes $[a_i]\,\in\, H^{p_i}(\mathcal{A})$, $p_i\,>\,0$,
$1\,\leq\, i\,\leq\, 3$, such that $a_1\cdot a_2$ and $a_2\cdot a_3$ are
exact. Write $a_1\cdot a_2=da_{1,2}$ and $a_2\cdot a_3=da_{2,3}$.
The {\it (triple) Massey product} of the classes $[a_i]$ is defined as
 $$
 \langle [a_1],[a_2],[a_3] \rangle \,=\, 
 [ a_1 \cdot a_{2,3}+(-1)^{p_{1}+1} a_{1,2}
  \cdot a_3] \in 
  \frac{H^{p_{1}+p_{2}+ p_{3} -1}(\mathcal{A})}{[a_1]\cdot H^{p_{2}+ p_{3} -1}(\mathcal{A})+ 
  [a_3]\cdot H^{p_{1}+ p_{2} -1}(\mathcal{A})}.
$$

Note that a Massey product $\langle [a_1],[a_2],[a_3] \rangle$ on $(\mathcal{A},\,d)$
is zero (or trivial) if and only if there exist $\widetilde{x}, \widetilde{y}\in \mathcal{A}$ such that
$a_1\cdot a_2=d\widetilde{x}$, \, $a_2\cdot a_3=d\widetilde{y}$\, and 
$[ a_1 \cdot \widetilde{y}+(-1)^{p_{1}+1}\widetilde{x}\cdot a_3]=0$.

We will use also the following property (see \cite{FIM} for a proof).
\begin{lemma} \label{lemm:massey-models}
Let $M$ be a connected differentiable manifold. Then, Massey products on $M$
can be calculated by using any model of $M$.
\end{lemma}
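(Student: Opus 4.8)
The plan is to derive the lemma from the functoriality of Massey products under morphisms of differential graded algebras. Recall from the discussion preceding this lemma that if $(\mathcal{A},d)$ is a model of $M$, then there is a diagram of quasi-isomorphisms
$$
(\mathcal{A},d)\ \stackrel{\nu}{\longleftarrow}\ (\bigwedge V,d)\ \stackrel{\rho}{\longrightarrow}\ (\Omega^*(M),d),
$$
where $(\bigwedge V,d)$ is the minimal model of $M$ and both $\nu$ and $\rho$ are genuine morphisms of DGAs. Hence it suffices to prove the following general fact: if $f\colon(\mathcal{A},d)\to(\mathcal{B},d)$ is a morphism of DGAs, and $[a_1],[a_2],[a_3]$ are cohomology classes of positive degrees $p_i=|a_i|$ with $[a_1][a_2]=[a_2][a_3]=0$, then the triple Massey product $\langle f^*[a_1],f^*[a_2],f^*[a_3]\rangle$ is defined and $f^*$ induces a homomorphism between the quotient groups in which $\langle[a_1],[a_2],[a_3]\rangle$ and $\langle f^*[a_1],f^*[a_2],f^*[a_3]\rangle$ respectively live, carrying the first Massey product to the second; moreover, if $f$ is a quasi-isomorphism, this homomorphism is an isomorphism and every admissible triple of classes in $\mathcal{B}$ (i.e. with vanishing consecutive products) arises this way. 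Granting this, applying it to $\rho$ and then to $\nu$, and transporting cohomology classes of $H^*(M)$ along $(\rho^*)^{-1}$ to $H^*(\bigwedge V)$ and then along $\nu^*$ to $H^*(\mathcal{A})$, identifies the triple Massey products computed in $(\Omega^*(M),d)$ with those computed in $(\mathcal{A},d)$. The same reasoning applies verbatim to higher Massey products, although only triple ones are needed in this paper.

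To prove the general fact, start with an arbitrary morphism $f$. If $a_1a_2=da_{1,2}$ and $a_2a_3=da_{2,3}$ in $\mathcal{A}$, then $f(a_1)f(a_2)=d\,f(a_{1,2})$ and $f(a_2)f(a_3)=d\,f(a_{2,3})$ in $\mathcal{B}$, so $\langle f^*[a_1],f^*[a_2],f^*[a_3]\rangle$ is defined; and since $f$ is an algebra morphism,
$$
f\bigl(a_1\,a_{2,3}+(-1)^{p_1+1}a_{1,2}\,a_3\bigr)=f(a_1)\,f(a_{2,3})+(-1)^{p_1+1}f(a_{1,2})\,f(a_3)
$$
is a legitimate representative of $\langle f^*[a_1],f^*[a_2],f^*[a_3]\rangle$. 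Furthermore $f^*$ carries the indeterminacy subgroup $[a_1]\cdot H^{p_2+p_3-1}(\mathcal{A})+[a_3]\cdot H^{p_1+p_2-1}(\mathcal{A})$ into $f^*[a_1]\cdot H^{p_2+p_3-1}(\mathcal{B})+f^*[a_3]\cdot H^{p_1+p_2-1}(\mathcal{B})$. Consequently $f^*$ descends to a homomorphism between the two quotient groups, and by the displayed formula it sends $\langle[a_1],[a_2],[a_3]\rangle$ to $\langle f^*[a_1],f^*[a_2],f^*[a_3]\rangle$; here one uses the standard fact that both Massey products are well defined elements of their quotients, independent of the auxiliary choices of $a_{1,2},a_{2,3}$.

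Now let $f$ be a quasi-isomorphism, so $f^*$ is an isomorphism of graded algebras. Then $f^*$ maps the indeterminacy subgroup of $\mathcal{A}$ onto that of $\mathcal{B}$: surjectivity follows because any $\beta\in H^*(\mathcal{B})$ equals $f^*\gamma$ for some $\gamma$ and $f^*[a_1]\cdot\beta=f^*([a_1]\gamma)$, and likewise with $[a_3]$. Hence the descended homomorphism of quotient groups is an isomorphism. Finally, given an admissible triple $[b_1],[b_2],[b_3]$ in $\mathcal{B}$, choose classes $[a_i]\in H^*(\mathcal{A})$ with $f^*[a_i]=[b_i]$; injectivity of $f^*$ together with $[b_ib_{i+1}]=0$ forces $[a_ia_{i+1}]=0$, so the Massey product in $\mathcal{A}$ is defined and $f^*$ sends it to the one in $\mathcal{B}$. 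This yields the general fact, and hence the lemma.

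The argument is formal in nature; the only points requiring care are the sign conventions in the representative formula and the appeal to the independence of a Massey product from the choice of bounding cochains $a_{1,2},a_{2,3}$ — this independence is what makes it legitimate to compare a single pair of representatives rather than all of them. I do not expect any genuine obstacle: a model of $M$ comes, by definition, equipped with the explicit zig-zag through the minimal model, and both of its legs are quasi-isomorphisms, so no abstract localization argument is required.
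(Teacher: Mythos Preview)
Your argument is correct. You have given the standard proof via functoriality of triple Massey products under DGA morphisms, using the explicit zig-zag $(\mathcal{A},d)\stackrel{\nu}{\longleftarrow}(\bigwedge V,d)\stackrel{\rho}{\longrightarrow}(\Omega^*(M),d)$ supplied by the definition of a model. The verification that $f^*$ carries representatives to representatives and indeterminacy to indeterminacy is accurate, and the passage from a general morphism to a quasi-isomorphism is handled cleanly.

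Regarding the comparison with the paper: the paper does not actually prove this lemma. It is stated with the parenthetical remark ``see \cite{FIM} for a proof'' and no argument is given in the present text. So your proposal supplies what the paper outsources. The argument you give is essentially the one found in the cited reference and in standard treatments of rational homotopy theory; there is no meaningful divergence of approach to discuss, only that you have written out what the authors chose to quote.
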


Moreover, we will use the following results.

\begin{lemma}[\cite{DGMS}] \label{lem:criterio1}
 If $M$ has a non-trivial Massey product, then $M$ is non-formal.
\end{lemma}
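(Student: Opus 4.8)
The plan is to prove the contrapositive: if $M$ is formal, then every triple Massey product on $M$ is trivial. The key tool is Lemma \ref{lemm:massey-models}, which permits Massey products to be computed using any model of $M$, combined with the observation that when $M$ is formal the cohomology algebra $(H^*(M),0)$, with identically zero differential, is itself a model of $M$ --- this is precisely the characterization of formality recalled in Section \ref{min-models}, namely that $(\Omega^*(M),d)$ and $(H^*(M),0)$ share the same minimal model.

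So suppose $M$ is formal, and let $[a_1]\in H^{p_1}(M)$, $[a_2]\in H^{p_2}(M)$, $[a_3]\in H^{p_3}(M)$ be classes for which the Massey product $\langle [a_1],[a_2],[a_3]\rangle$ is defined. By Lemma \ref{lemm:massey-models} it suffices to compute this product inside the model $(H^*(M),0)$ and to check that it vanishes there. In that DGA the differential is zero, so "exact" means "zero"; hence the defining hypotheses that $a_1\cdot a_2$ and $a_2\cdot a_3$ be exact amount to $a_1\cdot a_2=0$ and $a_2\cdot a_3=0$ in $H^*(M)$. We may therefore choose the bounding elements $a_{1,2}=0$ and $a_{2,3}=0$, and the defining formula
$$
\langle [a_1],[a_2],[a_3]\rangle=\big[\,a_1\cdot a_{2,3}+(-1)^{p_1+1}a_{1,2}\cdot a_3\,\big]
$$
yields the class $[0]=0$ in the quotient $H^{p_1+p_2+p_3-1}(M)\big/\big([a_1]\cdot H^{p_2+p_3-1}(M)+[a_3]\cdot H^{p_1+p_2-1}(M)\big)$. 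Thus $0$ is a representative of the Massey product set, i.e. the product is trivial in $(H^*(M),0)$, and hence trivial on $M$ by Lemma \ref{lemm:massey-models}.

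Consequently a non-trivial Massey product on $M$ is incompatible with formality, which is the assertion of the lemma. The only delicate point is the model-independence of the \emph{triviality} of a Massey product: one must note that "triviality" is the statement that $0$ lies in the coset defined by the product, and that this condition is preserved under the quasi-isomorphisms linking $M$ to $(H^*(M),0)$ --- which is exactly the content of Lemma \ref{lemm:massey-models}. No real obstacle arises beyond this bookkeeping; alternatively one could argue directly from Theorem \ref{prop:criterio1}, using a formality splitting $V=C\oplus N$ to show that a cocycle representing $\langle[a_1],[a_2],[a_3]\rangle$ can be taken in the ideal $I(N)$, on which closed elements are exact, but the model argument above is shorter.
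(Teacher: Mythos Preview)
The paper does not supply its own proof of this lemma; it is stated with a citation to \cite{DGMS} and used as a black box. So there is no in-paper argument to compare against.

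Your argument is correct and is the standard one: formality means $(H^*(M),0)$ is a model, and in a DGA with zero differential every defined triple Massey product is visibly trivial because one may take the bounding elements $a_{1,2}$ and $a_{2,3}$ to be zero. Invoking Lemma~\ref{lemm:massey-models} to transport triviality back to $M$ is exactly the right move, and you correctly flag that the relevant content of that lemma is the model-independence of \emph{triviality} (equivalently, of the full Massey set modulo indeterminacy), not merely of a single representative. The alternative sketch via Theorem~\ref{prop:criterio1} that you mention at the end would also work but is indeed longer.
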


\begin{lemma}[\cite{FIM}] \label{lem:$3$-formal}
Let $M$ be a $7$-dimensional simply connected compact manifold with $b_2(M)\,\leq 1$.
Then, M is $3$-formal and so formal.
\end{lemma}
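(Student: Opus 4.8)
The plan is to prove that $M$ is $3$-formal and then invoke Theorem~\ref{fm2:criterio2}: since $\dim M = 7 = 2\cdot 4 - 1$, $3$-formality is equivalent to formality, so the ``and so formal'' clause is immediate once $3$-formality is established. All the work is therefore to analyse the minimal model $(\bigwedge V, d)$ of $M$ through degree $3$ and exhibit decompositions $V^i = C^i \oplus N^i$ as in Definition~\ref{def:primera}.

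First I would dispose of the low degrees. Because $M$ is simply connected, $V^1 = 0$; and since a minimal differential has no linear part while there are no generators below degree $2$, one has $d(V^2)\subseteq(\bigwedge^{\ge 2}V)$, whose degree-$3$ part is zero (two generators of degree $\ge 2$ multiply to degree $\ge 4$), so $d|_{V^2}=0$. Hence I take $C^1=N^1=0$, $C^2=V^2$, $N^2=0$, and conditions (1)--(3) hold trivially for $s=1,2$ (the ideal $I_2$ is zero). In degree $3$, minimality and $V^1=0$ force $d$ to map $V^3$ into $\bigwedge^2 V^2=\mathrm{Sym}^2 V^2$, so I set $C^3:=\ker\big(d\colon V^3\to \mathrm{Sym}^2 V^2\big)$ and let $N^3$ be any complement; then (1) and (2) hold by construction, and crucially $\dim N^3\le \dim\mathrm{Sym}^2 V^2=\binom{b_2(M)+1}{2}\le 1$ because $b_2(M)\le 1$.

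The remaining step is condition (3) for $s=3$: every closed element of $I_3=I(N^2\oplus N^3)=I(N^3)$ inside $\bigwedge(V^2\oplus V^3)$ is exact in $\bigwedge V$. If $N^3=0$ this is vacuous, so the one case to treat is $N^3=\langle y\rangle$, $|y|=3$, which forces $b_2(M)=1$, say $V^2=\langle x\rangle$, and (after rescaling) $dy=x^2$. Since $y^2=0$, the principal ideal is $I_3=y\cdot\bigwedge(x,C^3)$; writing a typical element as $z=y\,\omega$ with $\omega\in\bigwedge(x,C^3)$ and using that $x$ and all generators of $C^3$ are $d$-closed (so $d\omega=0$), I get $dz=(dy)\omega - y\,d\omega = x^2\omega$. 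As multiplication by $x^2$ is injective on $\bigwedge(x,C^3)=\RR[x]\otimes\bigwedge C^3$, the element $z$ is closed only if $\omega=0$, i.e.\ $z=0$; thus the sole closed element of $I_3$ is $0$, which is exact. This yields $3$-formality, hence formality by Theorem~\ref{fm2:criterio2}.

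I expect the only genuine content to be the verification of (3) in degree $3$, and within it two elementary points: the bound $\dim N^3\le 1$, which is exactly where the hypothesis $b_2(M)\le 1$ is used, and the observation that $x^2$ is a non-zero-divisor in $\bigwedge(x,C^3)$, which annihilates any potential obstruction (morally, it rules out the only Massey products of the shape $\langle x,x,-\rangle$ that could live in degrees $\le 7$). Everything else is forced, since for a simply connected space the minimal model is rigid through degree $3$; note also that one really does need the $s$-formality criterion here rather than Massey products alone, the latter only detecting non-formality.
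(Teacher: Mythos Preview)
The paper does not prove this lemma; it is quoted from \cite{FIM} without argument, so there is no in-paper proof to compare against.

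Your argument is correct and is the expected one. Simple connectivity gives $V^1=0$ and forces $d|_{V^2}=0$; the hypothesis $b_2\le 1$ then bounds $\dim N^3\le \dim\mathrm{Sym}^2 V^2\le 1$; and in the sole nontrivial case $N^3=\langle y\rangle$ with $dy=x^2$, the ideal $I_3=y\cdot\bigwedge(x,C^3)$ contains no nonzero closed element because $d(y\omega)=x^2\omega$ and $x^2$ is a non-zero-divisor in $\RR[x]\otimes\bigwedge C^3$. Hence $M$ is $3$-formal, and Theorem~\ref{fm2:criterio2} with $n=4$ yields formality. This is precisely the kind of direct check of Definition~\ref{def:primera} that the paper itself performs in its later proofs, and it almost certainly coincides with the argument in \cite{FIM}.
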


 \subsection*{Minimal models of  $\SU(2)$ and $\SO(3)$-fibrations}\label{subsect:fibrations}
Let $F\to E \to B$ be a fibration of simply connected spaces. Let 
$(\SA_B, \widetilde{d})$ be a model  (not necessarily minimal) of
the base $B$, and let $(\bigwedge V_F, d)$ be a minimal model of the fiber $F$. By \cite{RS}, a model
of $E$ is the \emph{KS-extension} $(\SA_B\otimes \bigwedge V_F, D)$, where $D$ is defined
as $Db=\widetilde{d}b$, for $b\in \SA_B$, and $Df= df+ \Theta(f)$, 
there $f\in V_F$, and $$\Theta:V_F\to \SA_B$$
is called the \emph{transgression map}. This is also true in the case that $F, E$ and $B$ are nilpotent
spaces and the fibration is nilpotent, that is $\pi_1(B)$ acts nilpotently in the homotopy
groups $\pi_j(F)$ of the fiber.

In the case that $E$ and $B$ are simply connected and $F=\SU(2)=S^3$ or $F=\SO(3)=\RP^3$, the fibration is nilpotent.
Note that $\pi_1(\SO(3))=\ZZ_2$ but it acts trivially on the higher homotopy groups, since
the antipodal map on $S^3$ is homotopic to the identity. The minimal model of $S^3$ is $(\bigwedge u,d)$,
with $|u|=3$ and $du=0$. Both spaces $S^3$ and $\RP^3$ are rationally homotopy equivalent, so a fibration
$\SO(3)\,\rightarrow \,E\,\to B$ is a rational $S^3$-fibration (that is, after rationalization of the spaces, it becomes
a fibration). The transgression map $\Theta$ is such that $\Theta(u) \in \SA_B^4$ is a closed 
element of degree $4$ defining the Euler class  $e(E)$ of the fibration. 

\section{$\SU(2)$ and $\SO(3)$-bundles over the  complex Grassmannian ${\mathbb{G}r}_2({\mathbb{C}}^{n+2})$} \label{sect:grassmann1}

Now it is our purpose to prove the formality of all the $3$-Sasakian homogeneous spaces.
By Theorem \ref{class-homog-3-sasaki} we know that, except for the sphere $S^{4n+3}$, such an space is the total space of an 
$\SO(3)=\RP^3$-bundle over a Wolf space. The $\SO(3)$-bundles over the exceptional Wolf spaces will be 
treated in Section \ref{sect:exceptional}. For the $\SO(3)$-bundles over the non-exceptional Wolf spaces, we note that 
the sphere $S^{4n+3}$ and the projective space ${\mathbb{R}}{\mathbb{P}}^{4n+3}$ are formal, 
so it is sufficient to prove the formality of the spaces
${\mathrm{SU}}(n+2)\,/{\mathrm{S}}\big({\mathrm{U}}(n)\times{\mathrm{U}}(1)\big)$ and
${\mathrm{SO}}(n+4)\,/{(\mathrm{SO}}(n)\times {\mathrm{Sp}}(1))$. In this section we deal with the
first case and, more generally, we study the formality of the total space of $\SU(2)$ and $\SO(3)$-bundles over the complex Grassmannian
 $$
  {\mathbb{G}r}_2({\mathbb{C}}^{n+2})=\frac{{\mathrm{SU}}(n+2)}{{\mathrm{S}}\big({\mathrm{U}}(n)\times{\mathrm{U}}(2)\big)}.
 $$
Its cohomology ring is given by (see~\cite{Amann1})
  \begin{equation}\label{eqn:H-Gr}
	H^*\big({\mathbb{G}r}_2({\mathbb{C}}^{n+2})\big) =
	H^\ast(BT)^{W(\U(n)\times \U(2))}/H^{>0}(BT)^{W(\U(n+2))}\, ,
  \end{equation}
where $BT$ is the classifying space of a maximal torus of $\U(n+2)$, and
$W(G)$ denotes the Weyl group of a Lie group $G$. Thus the classes of
the cohomology of ${\mathbb{G}r}_2({\mathbb{C}}^{n+2})$ are symmetric
polynomials of $H^\ast(BT)=\mathbb{Q}[x_1,\ldots,x_n,x_{n+1},x_{n+2}]$, where each 
generator $x_i$ has degree 2, for $1 \leq i \leq (n+2)$.
Denote by $y_1,y_2$ the classes $x_{n+1},x_{n+2}$, respectively. On one
hand, $H^\ast(BT)^{W(\U(n)\times \U(2))}$ is generated by the symmetric
polynomials $\tau_n$ of $x_1,\ldots,x_n$, and by the symmetric
polynomials $\sigma_n$ of $y_1,y_2$. On the other hand,
$H^{>0}(BT)^{W(\U(n+2))}$ is generated by the symmetric polynomials of
$x_1,\ldots,x_n,y_1,y_2$. This gives the relation $\sigma\,
\tau=1$, where $\sigma=1+\sigma_1+\sigma_2+\ldots$ and
$\tau=1+\tau_1+\tau_2+\ldots$  Denote $l=y_1+y_2$ and $x=y_1\,y_2$. 
Here and in what follows, $y_1y_2$ stands for the cup product $y_1 \cup y_2$, and so on.
Thus,
\begin{equation}\label{cohom:complexgrass}
H^*\big({\mathbb{G}r}_2({\mathbb{C}}^{n+2})\big) =\mathbb{Q}[l,x]/(\sigma_{n+1},\sigma_{n+2}), 
\end{equation}
where $|l|=2$, $|x|=4$, and $\sigma_r$ $(r\geq 0)$ is the cohomology class of degree $2r$ that is defined recursively by 
 \begin{equation}\label{eqn:recursive}
\left\{
\begin{array}{l}
\sigma_0=1,\\
\sigma_1=-l,\\
 \sigma_r=-l\, \sigma_{r-1}-x\,\sigma_{r-2}, \qquad r\geq 2.
\end{array}
\right.
 \end{equation}

In the following Lemma, we determine the expression of $\sigma_r$ in terms of the cohomology classes $l$, $x$ and their cup products.

\begin{lemma}\label{lem:gen_expr}
The cohomology class $\sigma_r$ $(r\geq 0)$ on  ${\mathbb{G}r}_2({\mathbb{C}}^{n+2})$ has the following expression
\begin{equation}\label{eq:sigma}
\sigma_r=\sum_{k=0}^{\lfloor r/2\rfloor}(-1)^{r+k}\binom{r-k}{k} l^{r-2k}\,x^k.
\end{equation}
\end{lemma}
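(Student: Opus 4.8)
The plan is to prove \eqref{eq:sigma} by induction on $r$, working with the recursion \eqref{eqn:recursive}; note that this is a purely formal identity in the polynomial ring $\mathbb{Q}[l,x]$ (before passing to the quotient), so no geometry is involved beyond the definition of the $\sigma_r$. Write $f_r := \sum_{k=0}^{\lfloor r/2\rfloor}(-1)^{r+k}\binom{r-k}{k}\, l^{r-2k}x^k$ for the claimed closed form; the goal is to show $f_r=\sigma_r$ for all $r\geq 0$. First I would check the base cases, which are immediate: $f_0=(-1)^0\binom{0}{0}=1=\sigma_0$ and $f_1=(-1)^{1}\binom{1}{0}l=-l=\sigma_1$.

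For the inductive step, fix $r\geq 2$, assume $f_{r-1}=\sigma_{r-1}$ and $f_{r-2}=\sigma_{r-2}$, and compute $-l\,f_{r-1}-x\,f_{r-2}$, which by the recursion equals $\sigma_r$; it then suffices to show this equals $f_r$. Multiplying $f_{r-1}$ by $-l$ yields $\sum_{k}(-1)^{r+k}\binom{r-1-k}{k}\, l^{r-2k}x^k$, and multiplying $f_{r-2}$ by $-x$ and reindexing $k\mapsto k+1$ yields $\sum_{k\geq 1}(-1)^{r+k}\binom{r-1-k}{k-1}\, l^{r-2k}x^k$. Adding the two, the coefficient of $(-1)^{r+k}l^{r-2k}x^k$ becomes $\binom{r-1-k}{k}+\binom{r-1-k}{k-1}$, which is $\binom{r-k}{k}$ by Pascal's rule (with $m=r-1-k$: $\binom{m}{k}+\binom{m}{k-1}=\binom{m+1}{k}$). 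Hence $-l\,f_{r-1}-x\,f_{r-2}=f_r$, which closes the induction.

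The only thing requiring a little care — and the closest thing to an obstacle, though it is really just bookkeeping — is matching the summation ranges at the extreme indices. The term $k=0$ arises solely from $-l\,f_{r-1}$, where it is $(-1)^r\binom{r-1}{0}l^r=(-1)^r l^r$, in agreement with the $k=0$ term $(-1)^r\binom{r}{0}l^r$ of $f_r$. When $r$ is even the top term $k=r/2$ of $f_r$ arises solely from $-x\,f_{r-2}$ after reindexing, where its coefficient is $\binom{r-1-k}{k-1}=\binom{r/2-1}{r/2-1}=1=\binom{r/2}{r/2}$; when $r$ is odd the top term $k=(r-1)/2$ receives contributions from both sums and Pascal's rule applies as above. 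Since binomial coefficients $\binom{a}{b}$ vanish for $b<0$ or $b>a$, each sum may harmlessly be extended over all $k$, so these edge checks are automatic and the substance of the proof is entirely the Pascal identity.
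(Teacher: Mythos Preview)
Your proof is correct and follows essentially the same approach as the paper: induction on $r$ using the recursion \eqref{eqn:recursive}, reindexing the sum coming from $-x\sigma_{r-2}$, and applying Pascal's identity $\binom{r-1-k}{k}+\binom{r-1-k}{k-1}=\binom{r-k}{k}$. Your treatment of the boundary terms is in fact slightly cleaner than the paper's, which handles them via an auxiliary $\epsilon\in\{0,1\}$, but the substance is identical.
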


\begin{proof}
We proceed by induction on $r\geq 0$. It is clear that \eqref{eq:sigma} is true for $r=0,1$. 
Assume that \eqref{eq:sigma} holds for $s\leq r$. Then,
using the recursive definition of $\sigma_k$ given by \eqref{eqn:recursive} and the induction hypothesis we have
\begin{align*}
\sigma_{r+1}&=-l\sigma_r-x\sigma_{r-1}\\
&=-l\sum_{k=0}^{\lfloor r/2\rfloor}(-1)^{r+k}\binom{r-k}{k}l^{r-2k}x^k\, -\, x \sum_{k=0}^{\lfloor (r-1)/2\rfloor}(-1)^{r-1+k}\binom{r-1-k}{k}l^{r-1-2k}x^k\\
&=-\sum_{k=0}^{\lfloor r/2\rfloor}(-1)^{r+k}\binom{r-k}{k}l^{r-2k+1}x^k
-\sum_{k=0}^{\lfloor (r-1)/2\rfloor}(-1)^{r-1+k}\binom{r-1-k}{k}l^{r-1-2k}x^{k+1}\\
&=-\sum_{k=0}^{\lfloor r/2\rfloor}(-1)^{r+k}\binom{r-k}{k}l^{r-2k+1}x^k
-\sum_{k=1}^{\lfloor (r-1)/2\rfloor+1}(-1)^{r+k}\binom{r-k}{k-1}l^{r+1-2k}x^{k}.
\end{align*}
Thus,
 \begin{equation} \label{eqn:induct-recursive}
  \begin{aligned} 
\sigma_{r+1}&=-\sum_{k=1}^{\lfloor r/2\rfloor}(-1)^{r+k}\left(\binom{r-k}{k-1}+\binom{r-k}{k}\right)l^{r-2k+1}x^k-(-1)^r\binom{r}0l^{r+1}\\
&\hphantom{=\;}+\epsilon (-1)^{r+\lfloor (r-1)/2\rfloor}\binom{r-\lfloor(r-1)/2\rfloor-1}{\lfloor(r-1)/2\rfloor}x^{\lfloor(r-1)/2\rfloor+1},
 \end{aligned} 
\end{equation}
where $\epsilon\in \{0, 1\}$, and $\epsilon =1$ if and only if $r$ is odd, otherwise $\epsilon=0$. Clearly
if $r$ is odd,  $\lfloor (r-1)/2\rfloor=(r-1)/2$ and $\binom{r-\lfloor(r-1)/2\rfloor-1}{\lfloor(r-1)/2\rfloor}=1$.
Moreover, it is well known that $\binom{r-k}{k-1}+\binom{r-k}{k}=\binom{r+1-k}{k}$ and $\binom{r}0=\binom{r+1}{0}$. 
Substituting these equalities into \eqref{eqn:induct-recursive}, we obtain
 $$
\sigma_{r+1} = \sum_{k=0}^{\lfloor (r+1)/2\rfloor} (-1)^{r+1+k}\binom{r+1-k}{k} l^{r+1-2k}x^k.
 $$
\end{proof}

For the quaternionic K\"ahler form on ${\mathbb{G}r}_2({\mathbb{C}}^{n+2})$ we have:

\begin{proposition}\label{prop:formgrassc}
Let $\Omega$ be the quaternionic K\"ahler form on ${\mathbb{G}r}_2({\mathbb{C}}^{n+2})$. Then, in terms of the 
generators $l$ and $x$ of $H^*\big({\mathbb{G}r}_2({\mathbb{C}}^{n+2})\big)$ given by \eqref{cohom:complexgrass},
the de Rham cohomology class $[\Omega]\in H^4({\mathbb{G}r}_2({\mathbb{C}}^{n+2}))$ of $\Omega$ is 
 $$
 [\Omega]\,= \, l^2-4x . 
 $$
\end{proposition}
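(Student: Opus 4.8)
The plan is to identify $[\Omega]$ as the first Pontryagin class of the relevant $\SO(3)$-bundle over ${\mathbb{G}r}_2({\mathbb{C}}^{n+2})$, and to compute this Pontryagin class using formula \eqref{eqn:pont-chern} by passing through the tautological $\U(2)$-bundle. Recall from \eqref{cohom:complexgrass} that $H^*\big({\mathbb{G}r}_2({\mathbb{C}}^{n+2})\big)=\mathbb{Q}[l,x]/(\sigma_{n+1},\sigma_{n+2})$, where $l=y_1+y_2$ and $x=y_1y_2$ are (up to sign) the Chern classes of the dual of the tautological rank $2$ complex subbundle $E\to {\mathbb{G}r}_2({\mathbb{C}}^{n+2})$. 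More precisely, with the conventions of \eqref{eqn:H-Gr}, the Chern roots of (the dual of) $E$ are $y_1,y_2$, so that $c_1=l=y_1+y_2$ and $c_2=x=y_1y_2$ up to the sign bookkeeping in \eqref{eqn:recursive}.

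First I would recall that the $3$-Sasakian homogeneous space ${\mathrm{SU}}(n+2)/{\mathrm{S}}\big({\mathrm{U}}(n)\times{\mathrm{U}}(1)\big)$ is a principal $\SO(3)$-bundle over ${\mathbb{G}r}_2({\mathbb{C}}^{n+2})$ which, via the inclusion ${\mathrm{S}}\big({\mathrm{U}}(n)\times{\mathrm{U}}(1)\big)\subset {\mathrm{S}}\big({\mathrm{U}}(n)\times{\mathrm{U}}(2)\big)$ and the epimorphism $\U(2)\to \SO(3)$, lifts to a $\U(2)$-bundle $\widetilde S\to {\mathbb{G}r}_2({\mathbb{C}}^{n+2})$ whose associated rank $2$ complex vector bundle is precisely $E$ (or $E^*$). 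Then \eqref{eqn:pont-chern} gives
\[
p_1(S)=c_1(E)^2-4c_2(E).
\]
By \eqref{eqn:p1} this Pontryagin class is represented by the quaternionic K\"ahler form $\Omega$, so $[\Omega]=c_1(E)^2-4c_2(E)$. Substituting $c_1(E)=l$ and $c_2(E)=x$ (the signs in $c_1$ are irrelevant after squaring) yields $[\Omega]=l^2-4x$, as claimed.

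The main obstacle is the bookkeeping of the identification: one must verify carefully that the $\U(2)$-lift of the $\SO(3)$-bundle underlying ${\mathrm{SU}}(n+2)/{\mathrm{S}}\big({\mathrm{U}}(n)\times{\mathrm{U}}(1)\big)$ really is the tautological bundle $E$ (or its dual, or a twist by a line bundle), since a priori $c_1$ of the lift is only defined modulo the ambiguity of the covering $\U(2)\to \SO(3)$; however, $p_1(S)=c_1^2-4c_2$ is independent of this choice (replacing $E$ by $E\otimes L$ changes $c_1\mapsto c_1+2c_1(L)$ and $c_2\mapsto c_2+c_1(L)c_1+c_1(L)^2$, so $c_1^2-4c_2$ is unchanged), so the computation is robust. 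The remaining point is simply to check the normalization: that with the conventions \eqref{eqn:H-Gr}–\eqref{eqn:recursive} one indeed has $c_1(E)=\pm l$ and $c_2(E)=x$, which follows from the standard description of the cohomology of the complex Grassmannian in terms of Chern classes of the tautological bundle together with the splitting principle. Once these identifications are in place, the conclusion $[\Omega]=l^2-4x$ is immediate.
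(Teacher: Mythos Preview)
Your proposal is correct and follows essentially the same route as the paper: identify $[\Omega]=p_1$ of the $\SO(3)$-bundle $S={\mathrm{SU}}(n+2)/{\mathrm{S}}\big({\mathrm{U}}(n)\times{\mathrm{U}}(1)\big)\to{\mathbb{G}r}_2({\mathbb{C}}^{n+2})$, lift to the $\U(2)$-bundle $\widetilde S={\mathrm{U}}(n+2)/{\mathrm{U}}(n)$ (equivalently, the tautological rank~$2$ bundle), read off $c_1=l$, $c_2=x$ from the Borel description \eqref{eqn:H-Gr}, and apply \eqref{eqn:pont-chern}. Your additional remark that $c_1^2-4c_2$ is invariant under twisting $E\mapsto E\otimes L$ is a nice robustness check not made explicit in the paper, but otherwise the arguments coincide.
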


\begin{proof}
The cohomology class defined by $\Omega$ in $H^4({\mathbb{G}r}_2({\mathbb{C}}^{n+2}))$ determines
the Pontryagin class of the principal $\SO(3)$-bundle
$$
 \mathrm{SO}(3) = \mathrm{SU}(2)/\mathbb{Z}_2  \,\hookrightarrow \,
S=\frac{{\mathrm{SU}}(n+2)}{{\mathrm{S}}\big({\mathrm{U}}(n)\times{\mathrm{U}}(1)\big)} \,\longrightarrow\,
 {\mathbb{G}r}_2({\mathbb{C}}^{n+2})
 =\frac{{\mathrm{SU}}(n+2)}{{\mathrm{S}}\big({\mathrm{U}}(n)\times{\mathrm{U}}(2)\big)},
 $$
which gives the natural $3$-Sasakian homogeneous space $S$, where $\U(1)\subset \U(2)$ diagonally. 
We can lift this $\SO(3)$-bundle to an $\U(2)$-bundle by considering the complex Grassmanian ${\mathbb{G}r}_2({\mathbb{C}}^{n+2})$ as the
quotient space
${\mathbb{G}r}_2({\mathbb{C}}^{n+2})={\mathrm{U}}(n+2)\,/ \big({\mathrm{U}}(n)\times{\mathrm{U}}(2)\big)$,
and taking the principal $\U(2)$-bundle
 $$
  \mathrm{U}(2)   \,\hookrightarrow \, \widetilde S={\mathrm{U}}(n+2)/ \mathrm{U}(n)  \,\longrightarrow\,
 {\mathbb{G}r}_2({\mathbb{C}}^{n+2})={\mathrm{U}}(n+2)\,/ \big({\mathrm{U}}(n)\times{\mathrm{U}}(2)\big).
 $$
By the description in (\ref{eqn:H-Gr}), $y_1$ and $y_2$ are the Chern roots of $\U(2)<\U(n+2)$ embedded as the last two
entries. Then, $c_1(\widetilde S)=y_1+y_2=l$ and $c_2(\widetilde S)= y_1y_2=x$. 
Thus, by \eqref{eqn:p1} and \eqref{eqn:pont-chern}, 
 $$
 [\Omega]= p_1(S)= c_1(\widetilde S)^2-4c_2(\widetilde S)= l^2-4x.
 $$
\end{proof}

\begin{theorem} \label{th: 3-sasa-complexgr-1}
Consider  the principal  $F$-fiber bundle 
$F \,\rightarrow \,S \,\rightarrow\, {\mathbb{G}r}_2({\mathbb{C}}^{n+2})$ with 
$F=\SU(2)$ or $F=\SO(3)$, and Euler class $a l^2 + b x$, where $a, b\in\mathbb{Q}$. If $b\neq 0$, then $S$ is formal. In particular, the $3$-Sasakian homogeneous space 
${\mathrm{SU}}(n+2)/{\mathrm{S}}\big({\mathrm{U}}(n)\times{\mathrm{U}}(1)\big)$ is formal. 
\end{theorem}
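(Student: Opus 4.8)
The plan is to construct an explicit model of the total space $S$ using the KS-extension described in the subsection on fibrations, and then verify the formality criterion of Theorem \ref{prop:criterio1} directly on that model (using Lemma \ref{lemm:massey-models} to transfer the conclusion to $S$). Since $S^3$ and $\RP^3$ are rationally equivalent, in both cases ($F=\SU(2)$ or $F=\SO(3)$) a model of $S$ is $(\SA_B \otimes \bigwedge(u), D)$ with $|u|=3$, $Du = \Theta(u) = al^2+bx$, and where $(\SA_B,\widetilde d) = \bigl(\QQ[l,x]/(\sigma_{n+1},\sigma_{n+2}),0\bigr)$ is the cohomology algebra of the complex Grassmannian (which, being a symmetric space, is formal, so its cohomology is a model). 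The ``last'' statement then follows because, by Proposition \ref{prop:formgrassc}, the Euler class of the $3$-Sasakian homogeneous space $\mathrm{SU}(n+2)/\mathrm{S}(\mathrm{U}(n)\times\mathrm{U}(1))$ is $\tfrac14[\Omega] = \tfrac14(l^2-4x)$ (up to sign and scaling, using \eqref{eqn:c2} and \eqref{eqn:eulerclassSO(3)}), which has $b = -1 \neq 0$, so it falls under the hypothesis $b\neq 0$.

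The first step is to compute the minimal model of the model $(\SA_B\otimes\bigwedge(u),D)$. The base $\SA_B = \QQ[l,x]/(\sigma_{n+1},\sigma_{n+2})$ has a minimal model generated by $l$ (degree $2$), $x$ (degree $4$), and two generators $z_1$ (degree $2n+1$) and $z_2$ (degree $2n+3$) with $dz_1 = \sigma_{n+1}(l,x)$ and $dz_2 = \sigma_{n+2}(l,x)$ (this is the standard Koszul model of a complete intersection; one checks $\sigma_{n+1},\sigma_{n+2}$ form a regular sequence in $\QQ[l,x]$ using Lemma \ref{lem:gen_expr}, e.g. because $H^*(\mathrm{Gr})$ is finite-dimensional). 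Adjoining $u$ with $Du = al^2+bx$ gives a model $(\bigwedge(l,x,u,z_1,z_2),D)$ of $S$; one must then run the minimal model algorithm: since $Du$ is decomposable, $u$ survives as a genuine generator, and the key observation is that when $b\neq 0$ the relation $al^2+bx=Du$ lets us \emph{eliminate} $x$ — i.e. there is a change of generators making $x - Du/b$ the differential of nothing and $x$ cohomologically expressible via $l^2$ and $u$. Concretely, after a quasi-isomorphism the model becomes equivalent to $\bigwedge(l,u,z_1',z_2')$ where $dz_1',dz_2'$ are obtained from $\sigma_{n+1},\sigma_{n+2}$ by substituting $x \mapsto -\tfrac{a}{b}l^2$ (modulo terms involving $u$, which raise degree and can be absorbed), so that $dz_i'$ is a polynomial in $l$ alone plus a correction; I would make this substitution precise and show the resulting algebra is still minimal.

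The final step is to verify formality of this reduced minimal model via Theorem \ref{prop:criterio1}: choose $C = \langle l\rangle$ (in degree $2$) and $C^{|u|}=\langle u\rangle$ if $u$ is closed after reduction — but $u$ is \emph{not} closed, so $u$ lies in the complement $N$; the generators $z_1',z_2'$ also go in $N$. One must check that every closed element of the ideal $I(N)$ is exact. Because $dz_1', dz_2'$ become (up to $u$-corrections) powers of $l$ times units, the quotient $H^*(\bigwedge(l,u,z_1',z_2'))$ is a free module over $\QQ[l]/(l^N)$ on $1$ and $u$ for the appropriate $N$, which is intrinsically formal (any DGA with cohomology an exterior-times-truncated-polynomial algebra of this shape, in the relevant degree range, is formal — one can invoke that $S$ has the rational cohomology of a product-like space, or check the criterion by hand: closed elements of $I(N)$ must involve $z_1'$, $z_2'$, or the non-exact part of $u$, and each such closed combination is forced to be exact by the regular-sequence structure). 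The main obstacle I anticipate is making the ``elimination of $x$'' rigorous and controlling the $u$-dependent correction terms in $dz_1',dz_2'$ — in particular checking that no hidden Massey product among $l$, $u$, $z_1$, $z_2$ survives; I expect this is where one genuinely uses $b\neq 0$ (when $b=0$ the generator $x$ cannot be eliminated and non-formality via a Massey product of the shape $\langle x,\dots,\dots\rangle$ becomes possible, consistent with Theorem \ref{th: nonformal-complexgr-1}).
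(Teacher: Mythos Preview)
Your core idea matches the paper's: when $b\neq 0$, the relation $al^2+bx=Du$ lets one replace $x$ by $-\tfrac{a}{b}l^2$ in cohomology, collapsing $H^*(S)$ to something generated by $l$ alone. But the methodologies diverge, and your elimination step contains a genuine error.

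\textbf{The error.} After the change of generator $x'=x+\tfrac{a}{b}l^2$ one has $Du=bx'$, so $(u,x')$ is a \emph{contractible pair}: both must be dropped simultaneously. The quasi-isomorphic reduced model is $\bigl(\bigwedge(l,z_1,z_2),d\bigr)$ with $dz_i=\sigma_{n+i}\bigl(l,-\tfrac{a}{b}l^2\bigr)=c_i\,l^{n+i}$, and $u$ does \emph{not} survive. Your proposed model $\bigwedge(l,u,z_1',z_2')$ with $u$ closed would have $H^3\neq 0$, which is wrong (the Grassmannian has no odd cohomology below the middle, and $u$ is not closed in the original model). The ``terms involving $u$, which raise degree and can be absorbed'' that you anticipate do not exist: once $x'$ and $u$ are cancelled, the differentials $dz_i$ are honest polynomials in $l$ with no correction.

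\textbf{What remains.} Once the elimination is done correctly you still need the paper's key step: at least one of $c_1,c_2$ is non-zero (the paper argues this by running the recursion \eqref{eqn:recursive} backwards from $\sigma_{n+1},\sigma_{n+2}$ down to $\sigma_0=1$). Given, say, $c_1\neq 0$, replace $z_2$ by $z_2'=z_2-\tfrac{c_2}{c_1}\,l\,z_1$ so that $dz_2'=0$; then with $C=\langle l,z_2'\rangle$ and $N=\langle z_1\rangle$ every closed element of $I(N)$ vanishes (since $d(z_1P)=c_1 l^{n+1}P$ forces $P=0$), and Theorem~\ref{prop:criterio1} gives formality directly. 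The paper instead computes $H^{\leq 2n+1}(S)$ from the (non-free) model $(H^*(\mathbb{G}r)\otimes\bigwedge(u),d)$, reads off the shape of the minimal model in low degrees, and invokes the $s$-formality criterion (Theorem~\ref{fm2:criterio2}); this avoids building the full minimal model but costs a Poincar\'e-duality argument.

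\textbf{Minor point.} Lemma~\ref{lemm:massey-models} concerns computing Massey products in an arbitrary model; it is not needed to ``transfer formality'', since formality is a property of the minimal model and any two models share the same one.
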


\begin{proof}
We can assume that $n\,\geq 2$. Indeed, if $n\,=\,0$, then $S$ is formal by Theorem \ref{fm2:criterio2} and, 
if $n\,=\,1$, then $S$ is formal by Lemma~\ref{lem:$3$-formal}.

 Since ${\mathbb{G}r}_2({\mathbb{C}}^{n+2})$ is a compact positive quaternionic K\"ahler manifold, ${\mathbb{G}r}_2({\mathbb{C}}^{n+2})$
is simply connected \cite{Salamon1}. Then, according with section \ref{min-models}, 
the fibre bundle $F \rightarrow \, S \,\rightarrow\,{\mathbb{G}r}_2({\mathbb{C}}^{n+2})$, with $F=\SU(2)$ or $F=\SO(3)$ and Euler class
$a l^2 + b x$,  is a rational $S^3$-fibration. Thus~\cite{RS},  if $(\mathcal{A}, d_{\mathcal{A}})$ is a model 
of ${\mathbb{G}r}_2({\mathbb{C}}^{n+2})$, we have that
$(\mathcal{A} \otimes \bigwedge(u), d)$, with $|u|\,=\,3$, $d\vert_{\mathcal{A}}\,=\,d_{\mathcal{A}}$
and $du\,=\,a l^2 + b x$, is a model of $S$.

We know that ${\mathbb{G}r}_2({\mathbb{C}}^{n+2})$ is formal since it is a 
symmetric space of compact type (see also Theorem \ref{th:formalquat}).
Thus, a model of ${\mathbb{G}r}_2({\mathbb{C}}^{n+2})$ is
$\big(H^*({\mathbb{G}r}_2({\mathbb{C}}^{n+2})), 0\big)$, where $H^*({\mathbb{G}r}_2({\mathbb{C}}^{n+2}))$ is the
cohomology algebra of ${\mathbb{G}r}_2({\mathbb{C}}^{n+2})$  defined by \eqref{cohom:complexgrass}. Hence, 
a model of $S$ is the differential algebra
$(H^*({\mathbb{G}r}_2({\mathbb{C}}^{n+2}))\otimes \bigwedge(u), d)$, where $u$ has degree $3$ and 
$du=a l^2 + b x$.

Since $b\not=0$, $x = -(a\,/b)\,l^2$ on $H^*(S)$. Using that $\sigma_{n+1} = 0=\sigma_{n+2}$ on ${\mathbb{G}r}_2({\mathbb{C}}^{n+2})$,
from Lemma \ref{lem:gen_expr} we have 
 \begin{align*}
 & l^{n+1} \sum_{k=0}^{\lfloor (n+1)/2\rfloor}(-1)^{n+1+k}\binom{n+1-k}{k} \left(-\frac{a}{b}\right)^k = 0,  \\
 & l^{n+2} \sum_{k=0}^{\lfloor (n+2)/2\rfloor}(-1)^{n+k}\binom{n+2-k}{k} \left(-\frac{a}{b}\right)^k =0 .
 \end{align*}
One of the coefficients should be non-zero. If $a=0$, it is clear (because only the summand with 
$k=0$ contributes). 
If $a\neq 0$, we can run the recursive relations
(\ref{eqn:recursive}) backwards with $x=(-a/b)l^2$. If both coefficients were zero, then all $\sigma_r|_{x=(-a/b)l^2}=0$,
but this contradicts that $\sigma_0=1$. Therefore we have that either 
 $l^{n+1} = 0$ or $l^{n+2} = 0$ on $H^*(S)$. 

We deal first with the possibility that 
$l^{n+1} = 0$. Since $x = -\frac{a}{b} l^2$ on $S$, the cohomology of $S$ up to the degree $2n+1$ is
  $$
H^{0}(S)=\langle 1\rangle\,, \qquad H^{2i+1}(S) =0\,, \,\, 0 \leq i \leq n, \qquad H^{2j}(S)=\langle l^j\rangle , \,\, 1 \leq j \leq n\,.
  $$
By Poincar\'e duality,  $H^{4n+3-2j}(S)=\langle PD(l^j)\rangle$, for $1 \leq j \leq n$, where $PD(l^j)$ denotes the Poincar\'e dual of 
$l^j$.  Therefore, the minimal model of $S$ must be a differential graded algebra
$(\bigwedge V,d)$, being $\bigwedge V$ the free algebra of the form
$\bigwedge V=\bigwedge(a_2, v_{2n+1})\otimes \bigwedge V^{\geq (2n+2)}$, where $|a_2|=2$,
$|v_{2n+1}|=2n+1$, and $d$ is defined by $da_2=0$, $dv_{2n+1}=a_{2}^{n+1}$. 
According with Definition~\ref{def:primera}, we get 
$N^j=0$ for $j\leq 2n$, thus $S$ is $2n$-formal. Moreover, $S$ is 
$(2n+1)$-formal. In  fact, take $\alpha\in I(N^{\leq {2n+1}})$ a closed
element in $\bigwedge V$. As $H^*(\bigwedge V)=H^*(S)$ has only non-zero cohomology in even degrees $2j$ with $1 \leq j \leq n$, and 
in odd degrees $4n+3-2j$ with $1 \leq j \leq n$, 
it must be $|\alpha|= 4n+3-2j$, where $1 \leq j \leq n$. Hence $\alpha=a_{2}^{n-j+1}\,v_{2n+1}$, which is not closed.
So, according with Definition \ref{def:primera}, $S$ is $(2n+1)$-formal and, by Theorem
\ref{fm2:criterio2}, $S$ is formal.

Suppose now that $l^{n+1}\not=0$ but $l^{n+2} = 0$.
Note that $l^{n+1}\neq 0$ if and only if $\sigma_{n+1}=\tau\,(al^2+bx),$ for some non-zero cohomology 
class $\tau \in H^{2n-2}(\mathbb{G}r_2(\CC^{n+2}))$.  Then $\tau\, u$ is closed on $S$ because $d(\tau\,u)=\sigma_{n+1}$, 
and hence $H^{2n+1}(S)=\langle \tau\,u\rangle$. Clearly, $\tau\,u$ is not exact, since 
the image of the differential map $d$ is contained in $\mathcal{A}$.
Then, the minimal model of $S$ must be a differential graded algebra
$(\bigwedge V,d)$, being $\bigwedge V=\bigwedge(a_2, a_{2n+1}, v_{2n+3})\otimes \bigwedge V^{\geq (2n+4)}$, 
where $|a_2|=2$, $|a_{2n+1}|=2n+1$, 
$|v_{2n+3}|=2n+3$, and the differential $d$ is given by $da_2=0=da_{2n+1}$ and $dv_{2n+3}=a_{2}^{2n+2}$. 
According with Definition~\ref{def:primera}, we get 
$N^j=0$ for $j\leq 2n+1$, thus the manifold $S$ is $(2n+1)$-formal and, by Theorem~\ref{fm2:criterio2},  $S$ is formal.

From \eqref{eqn:eulerclassSO(3)}, Theorem \ref{class-homog-3-sasaki} and Proposition \ref{prop:formgrassc}, 
the $3$-Sasakian homogeneous space 
$S={\mathrm{SU}}(n+2)\,/{\mathrm{S}}\big({\mathrm{U}}(n)\times{\mathrm{U}}(1)\big)$ is the $\SO(3)$-bundle
$\SO(3)\rightarrow \, S \,\rightarrow\, {\mathbb{G}r}_2({\mathbb{C}}^{n+2})$ with Euler
class $-\frac14(l^2-4x)$, and so $S$ is formal. Even more, using \eqref{eq:sigma} for $\sigma_{n+1}$,
one can check that $l^{n+1}=0$ on $S$. So a minimal model of $S$ is the minimal model previously described for  $l^{n+1}=0$.
\end{proof}

\begin{theorem} \label{th: nonformal-complexgr-1}
A principal $\SU(2)$ or $\SO(3)$-fiber bundle 
$F \rightarrow \, S \,\rightarrow\, {\mathbb{G}r}_2({\mathbb{C}}^{n+2})$ with Euler class $a l^2$,  where $a\in\mathbb{Q}$, is formal if and only if $n$ is odd or $a=0$. 
\end{theorem}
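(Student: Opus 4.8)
The plan is to write down the minimal model of $S$ explicitly and decide formality from Theorem~\ref{prop:criterio1}, detecting non‑formality in the bad case by a nontrivial triple Massey product (Lemma~\ref{lem:criterio1}). We may assume $n\geq 2$, as for $n=0,1$ the space $S$ is formal exactly as in the proof of Theorem~\ref{th: 3-sasa-complexgr-1} (a sphere or projective space when $n=0$, a $7$‑manifold with $b_2\le 1$ when $n=1$, cf.\ Lemma~\ref{lem:$3$-formal}). Both an $\SU(2)$‑ and an $\SO(3)$‑bundle over ${\mathbb{G}r}_2({\mathbb{C}}^{n+2})$ with Euler class $al^2$ give a rational $S^3$‑fibration with transgression $al^2$ (Section~\ref{subsect:fibrations}); if $a=0$ the transgression vanishes and $S$ is rationally ${\mathbb{G}r}_2({\mathbb{C}}^{n+2})\x S^3$, a product of formal spaces, hence formal, so assume $a\neq 0$. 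Since $H^*({\mathbb{G}r}_2({\mathbb{C}}^{n+2}))=\QQ[l,x]/(\sigma_{n+1},\sigma_{n+2})$ is a complete intersection — the quotient is finite‑dimensional, so $\sigma_{n+1},\sigma_{n+2}$ form a regular sequence in $\QQ[l,x]$ — the minimal model of the formal Grassmannian is the Koszul algebra $\bigwedge(\alpha,\beta,\gamma,\delta)$ with generators of degrees $2,4,2n+1,2n+3$, $d\alpha=d\beta=0$, $d\gamma=\sigma_{n+1}(\alpha,\beta)$, $d\delta=\sigma_{n+2}(\alpha,\beta)$, where $\sigma_r(\alpha,\beta)$ denotes the polynomial of Lemma~\ref{lem:gen_expr} with $l,x$ replaced by $\alpha,\beta$ (which represent $l,x$). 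Adjoining the fibre generator gives the minimal model of $S$,
$$\mathcal{M}=\big(\textstyle\bigwedge(\alpha,\beta,r,\gamma,\delta),\,d\big),\qquad |r|=3,\quad dr=a\,\alpha^2,$$
and after rescaling $r$ we may take $dr=\alpha^2$; this is a minimal Sullivan algebra because $\alpha^2$, $\sigma_{n+1}(\alpha,\beta)$ and $\sigma_{n+2}(\alpha,\beta)$ are decomposable.

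Everything now turns on a parity‑sensitive change of generators, the key being that, by \eqref{eq:sigma}, the exponent of $\alpha$ in each monomial of $\sigma_r(\alpha,\beta)$ has the parity of $r$. If $n=2m+1$, then $\sigma_{n+1}=\sigma_{2m+2}$ involves only even powers of $\alpha$ with $\alpha$‑free term $(-1)^{m+1}\beta^{m+1}$, so writing $\sigma_{n+1}(\alpha,\beta)=(-1)^{m+1}\beta^{m+1}+\alpha^2 R_1$ and replacing $\gamma$ by $\gamma'=(-1)^{m+1}(\gamma-rR_1)$ gives $d\gamma'=\beta^{m+1}$; while $\sigma_{n+2}=\sigma_{2m+3}$ involves only odd powers of $\alpha$ with linear term $(-1)^m(m+2)\alpha\beta^{m+1}$, so writing $\sigma_{n+2}(\alpha,\beta)=(-1)^m(m+2)\alpha\beta^{m+1}+\alpha^3 R_2$ and replacing $\delta$ by $\delta'=\delta-r\alpha R_2-(-1)^m(m+2)\alpha\gamma'$ gives $d\delta'=0$. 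These corrections are decomposable, so $\mathcal{M}$ is the minimal algebra $\bigwedge(\alpha,\beta,r,\gamma',\delta')$ with $dr=\alpha^2$, $d\gamma'=\beta^{m+1}$, $d\delta'=0$, i.e.\ the tensor product of the minimal models of $S^2$, of $\mathbb{HP}^m$ (observe $|\gamma'|=2n+1=4(m+1)-1$), and of $S^{2n+3}$. Each factor is formal, hence so is $\mathcal{M}$, and $S$ is formal; in fact $S\simeq_\QQ S^2\x\mathbb{HP}^m\x S^{2n+3}$.

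If instead $n=2m$, the same manipulation — now $\sigma_{n+1}=\sigma_{2m+1}$ has only odd $\alpha$‑powers with linear term $(-1)^{m+1}(m+1)\alpha\beta^m$, and $\sigma_{n+2}=\sigma_{2m+2}$ has only even $\alpha$‑powers with $\alpha$‑free term $(-1)^{m+1}\beta^{m+1}$ — puts $\mathcal{M}$ into the normal form $\bigwedge(\alpha,\beta,r,\gamma,\delta)$ with $dr=\alpha^2$, $d\gamma=\alpha\beta^m$, $d\delta=\beta^{m+1}$. In this model the triple Massey product $\langle[\alpha],[\alpha],[\beta^m]\rangle$ is defined, because $\alpha\cdot\alpha=dr$ and $\alpha\cdot\beta^m=d\gamma$ are exact, and it is represented by $z=\alpha\gamma-r\beta^m$. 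I would then verify three things: (i) $z$ is a cocycle of degree $2n+3$; (ii) $[z]\neq 0$, since a degree count using $|r|=3$, $|\gamma|=2n+1$, $|\delta|=2n+3$ shows that every element of $\mathcal{M}$ of degree $2n+2$ lies in $\QQ[\alpha,\beta]$ and is therefore closed, so there are no nonzero coboundaries in degree $2n+3$; (iii) the indeterminacy $[\alpha]\cdot H^{2n+1}(S)+[\beta^m]\cdot H^{3}(S)$ vanishes, because the same kind of count in $\mathcal{M}$ gives $H^3(S)=H^{2n+1}(S)=0$ (all odd‑degree cohomology of $S$ lies in degrees $\ge 2n+3$). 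Hence $\langle[\alpha],[\alpha],[\beta^m]\rangle$ is a single nonzero class and $S$ is non‑formal by Lemma~\ref{lem:criterio1}. Together with the previous paragraph this proves the equivalence (note that, by Proposition~\ref{prop:formgrassc}, the genuine $3$‑Sasakian bundle has Euler class $-\tfrac14(l^2-4x)\notin\langle l^2\rangle$, so it is governed by Theorem~\ref{th: 3-sasa-complexgr-1}, not by this statement).

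The main obstacle is the bookkeeping in the change of generators: one must extract from \eqref{eq:sigma} the exact $\alpha$‑leading terms of $\sigma_{n+1}(\alpha,\beta)$ and $\sigma_{n+2}(\alpha,\beta)$, which — crucially — behave in opposite ways depending on the parity of $n$ (the lowest $\alpha$‑power is $\alpha^0$ in one of $\sigma_{n+1},\sigma_{n+2}$ and $\alpha^1$ in the other), and one must check that each substitution is by a decomposable element so that the resulting normal form is again a minimal Sullivan algebra. Once $\mathcal{M}$ is in normal form, formality for odd $n$ is immediate from the tensor‑product structure, and the Massey‑product computation for even $n$ is a short degree argument.
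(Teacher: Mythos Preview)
Your proof is correct. The overall strategy matches the paper's --- for $a\neq 0$, build a model of $S$ from the rational $S^3$-fibration, prove formality when $n$ is odd, and exhibit the Massey product $\langle l,l,x^{n/2}\rangle$ when $n$ is even --- but the execution differs in a useful way. The paper works in the non-minimal model $(H^*({\mathbb{G}r}_2(\CC^{n+2}))\otimes\bigwedge(u),d)$ and, for odd $n$, describes only the low-degree part of the minimal model and then invokes the $s$-formality criterion (Definition~\ref{def:primera} and Theorem~\ref{fm2:criterio2}). You instead note that the Grassmannian cohomology is a complete intersection, write down the full five-generator minimal model of $S$, and perform a change of generators which, for $n=2m+1$, identifies it with the minimal model of $S^2\times{\mathbb{H}}{\mathbb{P}}^{m}\times S^{2n+3}$; this gives the rational homotopy type outright and bypasses the $s$-formality machinery entirely. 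For even $n$ both arguments compute the same triple Massey product; your normal form reduces both the non-exactness of the representative $\alpha\gamma-r\beta^m$ and the vanishing of the indeterminacy to short degree counts in the free algebra $\bigwedge(\alpha,\beta,r,\gamma,\delta)$, whereas the paper verifies non-exactness in its model and leaves the indeterminacy implicit.
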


\begin{proof}
If $a=0$ then $S$ is rationally equivalent to $S^3\times {\mathbb{G}r}_2({\mathbb{C}}^{n+2})$, which
is formal being the product of two formal manifolds.

Suppose now that $a\neq 0$. Proceeding as in the proof of Theorem~\ref{th: 3-sasa-complexgr-1}, a model of
$S$ is  $(H^\ast(\mathbb{G}r_2(\CC^{n+2}))\otimes \bigwedge(u),d)$, where $|u|=3$ and $du=al^2$. The 
cohomology of $S$ up to degree $2n+1$ is  $H^{0}(S)=\langle 1\rangle\,,$
$H^{2k}(S)=\langle l^\epsilon x^{\lfloor k/2\rfloor}\rangle$, $H^{2k+1}(S)=0$,
where $\epsilon=k-2\lfloor k/2\rfloor$ and $k=0,1,\ldots,n$. 

Suppose that $n$ is odd.  Since the cohomology class $\sigma_{n+1}$ is zero in
$\mathbb{G}r_2(\CC^{n+2})$, the explicit expression of $\sigma_{n+1}$
in Lemma~\ref{lem:gen_expr} implies that $x^{(n+1)/2}=0$ in $S$. Thus,
the minimal model of $S$ must be a differential graded algebra
$(\bigwedge V,d)$ where 
$\bigwedge V=\bigwedge(a_2, a_4)\otimes\bigwedge(v_3,v_{2n+1})\otimes \bigwedge V^{\geq 2n+2}$, where $|a_i|=i$ with $i=2,4$, $|v_j|=j$ with $j=3, 2n+1$, and the differential map is defined by
$da_i=0$, $dv_3=a_{2}^2$ and $dv_{2n+1}=a_{4}^{(n+1)/2}$.
Now take $\alpha$ a closed element in the ideal generated by $I(N^{\leq 3})$ in $\bigwedge V$.
 Then, $\alpha$ is of the form $\alpha=a_{2}^{p} \, v_3$ which is not closed, for any integer 
number $p\geq 1$.
Therefore $S$ is $3$-formal, and so $S$ is $2n$-formal because the spaces $N^j$ are zero for $4\leq j \leq 2n$.
Let us prove that $S$ is
$(2n+1)$-formal. 
Let $\alpha$ be an element of the ideal $I(N^{\leq 2n+1})$, and let us suppose that $\alpha$ is closed and homogeneous.
Then, $|\alpha| > (2n+1)$ must be odd by the cohomology of $S$. Thus, $\alpha$ is of the form 
$\alpha=P_1 \,v_3+P_2 \,v_{2n+1}$, where $P_1,P_2 \in \bigwedge(a_2, a_4)$.
The equality
$d\alpha=0$ implies that there exists $P\in \bigwedge(a_2, a_4)$ such that $P_1=P\, a_{4}^{(n+1)/2}$ 
and $P_2=-P\,a_{2}^2$. Hence $\alpha=d(P\,v_3 \,v_{2n+1})$ is exact. Therefore $S$ is
$(2n+1)$-formal, and by Theorem~\ref{fm2:criterio2} it is formal. 

If $n$ is even, the explicit expression of $\sigma_{n+1}$
in Lemma~\ref{lem:gen_expr} implies that $l\, x^{n/2}=0$ on $S$ since $\sigma_{n+1}=0$ . Then,
$\langle l,l,x^{n/2}\rangle$ defines a triple Massey product. By
Lemma~\ref{lemm:massey-models}, we compute this Massey product in our model.
Here, $l^2=du$ and 
 $$
  l \,x^{n/2}=d\left(\sum_{k=0}^{\lfloor (n-1)/2\rfloor} (-1)^{n+k}
  \binom{n+1-k}{k} l^{n-1-2k}x^ku\right),
  $$
by Lemma~\ref{lem:gen_expr}. So the triple 
Massey product $\langle l,l,x^{n/2}\rangle=\xi\, u$, where
 $$
  \xi=x^{n/2}-\sum_{k=0}^{\lfloor (n-1)/2\rfloor} (-1)^{n+k}
  \binom{n+1-k}{k} l^{n-1-2k}x^k.
  $$

Let us see that $\xi\, u$ is not exact. For
$\xi\, u$ to be exact in the model $\mathcal{A}\otimes
\bigwedge(u)$ we need that  $\xi=0$ in $\mathcal{A}=H^\ast(\mathbb{G}r_2(\CC^{n+2}))$, due to
the image of the differential map is contained in $\mathcal{A}$. Since $|\xi|=2n$, it cannot be a combination of $\sigma_{n+1}$ and
$\sigma_{n+2}$, of degrees~$2n+2$ and~$2n+4$, respectively. Hence, $\xi$
does not belong to the ideal $(\sigma_{n+1},\sigma_{n+2})$ and thus it
is non-zero. Therefore, $S$ is non-formal.
\end{proof}

\begin{remark}
These results can be extended to the family ${S}(\mathbf{p})$
of $3$-Sasakian manifolds for $\mathbf{p}=(p_1,p_2,\ldots,p_{n+2})\in
\ZZ^{n+2}_{>0}$ introduced in~\cite{BGMa}. They are defined as 
$\U(n+2)/(\U(n)\times \U(1))$, where the action of $\U(n)\times \U(1)$ on $\U(n+2)$ is defined by
  $$
  ((B,\lambda),A) \mapsto \begin{pmatrix}\lambda^{p_1} \\ &
 \ddots \\ & & \lambda^{p_{n+2}}\end{pmatrix}\cdot A\cdot \begin{pmatrix}
 B & 0 \\ 0 & I_2 \end{pmatrix}.
 $$
In particular,  this family includes the homogeneous space of Theorem~\ref{th:
3-sasa-complexgr-1} by letting $\mathbf{p}=(1,1,\ldots,1)\in
\ZZ^{n+2}$. If $\mathbf{p}\neq (1,\ldots,1)$, the manifold
${S}(\mathbf{p})$ is inhomogeneous. 

A description of their cohomology with integer coefficients is in~\cite[Theorem E]{BGMa}:
  $$
  H^\ast({S}(\mathbf{p}),\mathbb{Z})\cong 
  \left(  \frac{\mathbb{Z}[b_2]}{\la b^{n+2}_2 \ra }\otimes E[f_{2n+3}]\right)/
  \la \sigma_{n+1}(\mathbf{p})b_2^{n+1} , f_{2n+3}\, b_2^{n+1}\ra ,
  $$
where $|b_2|=2$ and $|f_{2n+3}|=2n+3$, and
$\sigma_{n+1}(\mathbf{p})$ denotes the $(n+1)$-th elementary symmetric polynomial on
$\mathbf{p}$. The first relation implies that ${S}(\mathbf{p})$
are not homotopic equivalent \cite[Corollary 8.1]{BGMa}. Nevertheless, if we take rational
coefficients, we have $b_2^{n+1}=0$ in $H^\ast({S}(\mathbf{p}))$ for any
$\mathbf{p}$.
Hence, the cohomology ring of ${S}(\mathbf{p})$ with rational
coefficients depends only on the length of $\mathbf{p}$. In particular,
Theorem~\ref{prop:formgrassc} implies that they are formal.
\end{remark}

\section{$\SU(2)$ and $\SO(3)$-bundles over the oriented real Grassmannian $\widetilde{\mathbb{G}r}_4({\mathbb{R}}^{n+4})$} \label{sect:grassmann2}

 In this section we show that the $3$-Sasakian homogeneous space 
${\mathrm{SO}}(n+4)\,/({\mathrm{SO}}(n)\times {\mathrm{Sp}}(1))$, where $n \geq 3$,
is formal. More generally, we study the formality of the total space of $\SU(2)$ and $\SO(3)$-bundles over the 
oriented real Grassmannian manifold
 $$
 \widetilde{\mathbb{G}r}_4({\mathbb{R}}^{n+4})=\frac{{\mathrm{SO}}(n+4)}{{\mathrm{SO}}(n)\times {\mathrm{SO}}(4)}.
 $$

To make explicit the cohomology ring of $\widetilde{\mathbb{G}r}_4({\mathbb{R}}^{n+4})$, we distinguish the case 
when $n$ is even and the case when $n$ is odd.

\subsection{$n$ is even.}  Take $n=2m \geq 4$.
Then, the cohomology of $\widetilde{\mathbb{G}r}_4({\mathbb{R}}^{n+4})$ is given by (see~\cite{Amann1})
 $$
 H^*\big(\widetilde{\mathbb{G}r}_4({\mathbb{R}}^{n+4})\big) =
 H^\ast(BT)^{W(\SO(n)\times \SO(4))}/H^{>0}(BT)^{W(\SO(n+4))}
 $$
where $BT$ is the classifying space of a maximal torus $T$ in $\SO(n+4)$, and
$W(G)$ denotes the Weyl group of the Lie group $G$. So the 
cohomology classes on $\widetilde{\mathbb{G}r}_4({\mathbb{R}}^{n+4})$ can be viewed as
symmetric polynomials of elements in 
 $H^\ast(BT)=\mathbb{Q}[x_1,x_2,\ldots,x_{m}, x_{m+1}, x_{m+2}]$, where
$x_i$ has degree $2$, for $1 \leq i\leq (m+2)$. If we denote 
by $y_1$ and $y_2$ the classes $x_{m+1}$ and $x_{m+2}$, respectively, the symmetric
polynomials $\tilde\tau_k$ of $x_1^2,\ldots,x_{m}^2$ and the symmetric
polynomials $\tilde\sigma_k$ of $y_1^2,y_2^2$, for $k\geq0$, are the generators of
$H^\ast(BT)^{W(\SO(n)\times \SO(4))}$. As 
$n$ and~$4$ are even, 
$x_1\cdots x_{m}$ and $y_1y_2$ are also invariant by the action of the group
$W(\SO(n)\times \SO(4))$.
On the other hand, the symmetric polynomials $\sigma_k$ of
$x_1^2,\ldots,x_{m}^2, y_1^2, y_2^2$ and $x_1\,\cdots\, x_{m} \,y_1\, y_2$ are
invariant by $W(\SO(n+4))$.
These are the generators of $H^{>0}(BT)^{W(\SO(n+4))}$ 
with the relations $\left(x_1\cdots x_{m}\right)\,\left(y_1y_2\right)=0$ and
$ \widetilde\tau\,\widetilde\sigma=1$, where
$ \widetilde\tau=1+ \widetilde\tau_1+ \widetilde\tau_2+\ldots$ and
$ \widetilde\sigma=1+ \widetilde\sigma_1+ \widetilde\sigma_2+\ldots$. Hence, we can take 
$l=y_1^2+y_2^2$, $x=y_1\,y_2$ and $z=x_1\,  x_2\, \cdots\, x_{m}$ as the generators of the cohomology of
$\widetilde{\mathbb{G}r}_4({\mathbb{R}}^{n+4})$, and so
     \begin{equation}\label{cohom:realgrasseven}
    H^*\big(\widetilde{\mathbb{G}r}_4({\mathbb{R}}^{n+4})\big) =\mathbb{Q}[l,x,
    z]/(x z, z^2 - \widetilde\sigma_{m},  \widetilde\sigma_{m+1}), 
    \end{equation}
where $|l| = |x|=4$, 
$|z|=2m$ and $ \widetilde\sigma_r$ $(r\geq 0)$ is the  cohomology
class of degree $4r$ that is defined recursively by 
  \begin{equation}\label{eq:sigmaireal}
  \left\{ \begin{array}{l}
  \widetilde\sigma_0=1,\\
   \widetilde\sigma_1=-l,\\
   \widetilde\sigma_r=-l \widetilde\sigma_{r-1}-x^2 \widetilde\sigma_{r-2}, \qquad r\geq 2\,.
  \end{array} \right.
  \end{equation}
 A similar proof to the one made for Lemma~\ref{lem:gen_expr} shows
\begin{equation}\label{eq:realsigma}
  \widetilde\sigma_r=\sum_{k=0}^{\lfloor r/2\rfloor}(-1)^{r+k}\binom{r-k}{k} l^{r-2k} x^{2k}, \qquad  r \geq 0\,.
 \end{equation}

\begin{proposition}\label{prop:formgrassr}
Let $\Omega$ be  the quaternionic K\"ahler form on $\widetilde{\mathbb{G}r}_4({\mathbb{R}}^{n+4})$ with $n=2m\geq 4$.
Then, in terms of the generators $l, x, z$ of $H^*(\widetilde{\mathbb{G}r}_4({\mathbb{R}}^{n+4}))$ given by (\ref{cohom:realgrasseven}),
the de Rham cohomology class $[\Omega]\in H^4(\widetilde{\mathbb{G}r}_4({\mathbb{R}}^{n+4}))$ is $[\Omega]=l+2x$.
\end{proposition}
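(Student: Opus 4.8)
The strategy mirrors the proof of Proposition~\ref{prop:formgrassc}: the class $[\Omega]$ is the first Pontryagin class of the principal $\SO(3)$-bundle $S=\SO(n+4)/(\SO(n)\times\Sp(1))$ over $\widetilde{\mathbb{G}r}_4({\mathbb{R}}^{n+4})=\SO(n+4)/(\SO(n)\times\SO(4))$, coming from the double cover $\Sp(1)\times\Sp(1)=\Spin(4)\to\SO(4)$ restricted to the first $\Sp(1)$-factor. To compute $p_1(S)$ I would lift the situation to the universal setting. The bundle $S\to\widetilde{\mathbb{G}r}_4$ fits into the picture where the associated rank-$3$ real vector bundle is $\mathfrak{so}(3)$ built from one of the two $\Sp(1)$-factors of $\SO(4)$; concretely, $\SO(4)$ acts on $\RR^4=\HH$, and $\Spin(4)=\Sp(1)_+\times\Sp(1)_-$ with the tautological $\RR^4$ splitting as $\HH$ under left and right multiplication. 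The splitting $\RR^4=\HH$ makes the associated bundle $E=S_+\oplus$ its conjugate, or more precisely one uses that the tautological $\RR^4$-bundle $\gamma_4\to\widetilde{\mathbb{G}r}_4$ has a canonical ``quaternionic'' structure reducing its structure group to $\Spin(4)$, and $\Lambda^2\gamma_4=\Lambda^2_+\oplus\Lambda^2_-$ is the sum of the two associated $\mathfrak{so}(3)$-bundles.

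\textbf{Key steps.} First, recall from \eqref{eqn:p1} that $[\Omega]=p_1(S)$, where $S$ is the $\SO(3)$-bundle associated to one of the two self-dual/anti-self-dual summands $\Lambda^2_\pm\gamma_4$ of the exterior square of the tautological rank-$4$ bundle $\gamma_4$ over $\widetilde{\mathbb{G}r}_4({\mathbb{R}}^{n+4})$. Second, use the standard identity for an oriented rank-$4$ bundle: $p_1(\Lambda^2_\pm\gamma_4)=p_1(\gamma_4)\pm 2e(\gamma_4)$, where $e(\gamma_4)$ is the Euler class. (This is the $\SO(4)$-analogue of the $\SU(2)$ computation in Proposition~\ref{prop:formgrassc}: in terms of the Chern-type roots $\pm y_1,\pm y_2$ of the $\Spin(4)$-structure, $\Lambda^2_+$ has roots $0,\,y_1+y_2,\,-(y_1+y_2)$ and $\Lambda^2_-$ has roots $0,\,y_1-y_2,\,-(y_1-y_2)$, hence first Pontryagin classes $(y_1+y_2)^2$ and $(y_1-y_2)^2$, whose sum is $2(y_1^2+y_2^2)=2p_1(\gamma_4)$ and whose difference is $4y_1y_2=\pm 2e(\gamma_4)$.) Third, translate into the generators of \eqref{cohom:realgrasseven}: by the description of the cohomology via $H^\ast(BT)^{W}$, the classes $y_1,y_2$ are the Chern roots of the rank-$2$ factors coming from $\SO(4)$, so $p_1(\gamma_4)=y_1^2+y_2^2=l$ (the Pontryagin class picks up the squares) and $e(\gamma_4)=y_1y_2=x$. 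Therefore $p_1(\Lambda^2_\pm\gamma_4)=l\pm 2x$, and choosing the summand corresponding to the $\Sp(1)$ appearing in the isotropy group of $S$ gives $[\Omega]=l+2x$ (the sign of $x$ being fixed by the orientation convention on $\widetilde{\mathbb{G}r}_4$, i.e. by which $\Sp(1)$-factor one quotients by).

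\textbf{Main obstacle.} The delicate point is not the cohomology bookkeeping but pinning down exactly which of the two $\SO(3)=\Sp(1)_\pm$ factors appears as the fiber of the $3$-Sasakian bundle $S\to\widetilde{\mathbb{G}r}_4$, and hence whether the answer is $l+2x$ or $l-2x$; this amounts to matching the $3$-Sasakian/quaternionic-K\"ahler orientation convention (the $\Sp(1)$ in $\Sp(n)\Sp(1)\supset\SO(n)\times\SO(4)$ acting on the quaternionic directions) with the chosen orientation of the oriented Grassmannian used in \eqref{cohom:realgrasseven}. I would resolve this by a direct check of the isotropy representations: the isotropy $\SO(n)\times\SO(4)$ acts on the tangent space $\RR^n\otimes\RR^4$, and the quaternionic $\Sp(1)$-factor is precisely the $\Sp(1)_+$ (or $\Sp(1)_-$) that acts on $\RR^4=\HH$ by one-sided multiplication while commuting with the quaternionic structure on the $\RR^n$-leg; this one-sided factor has $\mathfrak{so}(3)$-bundle $\Lambda^2_+\gamma_4$ (say), with $p_1=(y_1+y_2)^2=l+2x$. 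As a consistency check, one can verify on the smallest case $n$ small that the resulting $[\Omega]=l+2x$ is a nonzero indecomposable degree-$4$ class (it is not in the ideal $(xz,z^2-\tilde\sigma_m,\tilde\sigma_{m+1})$ in low degrees), exactly as a quaternionic K\"ahler form must be, and that under the inclusion $\HH\PP^{n}\hookrightarrow$ Wolf spaces the restriction recovers the known generator of $H^4(\HH\PP^n)$.
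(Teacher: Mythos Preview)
Your proposal is correct and follows essentially the same route as the paper: both identify $[\Omega]=p_1(S)$ for the $\SO(3)$-bundle $S\to\widetilde{\mathbb{G}r}_4(\RR^{n+4})$, realise the associated rank-$3$ bundle as $\Lambda^2_\pm V$ for the tautological rank-$4$ bundle $V$, and compute its Pontryagin class via the Chern roots $y_1,y_2$ to obtain $(y_1\pm y_2)^2=l\pm 2x$. On your ``main obstacle'' (the sign), the paper simply observes that the cohomology ring \eqref{cohom:realgrasseven} is invariant under $x\mapsto -x$, so the choice between $l+2x$ and $l-2x$ is immaterial for everything that follows; you need not work as hard as you propose to pin it down.
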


\begin{proof}
By \eqref{eqn:p1}, the cohomology class $[\Omega]$ equals the Pontryagin class of the $\SO(3)$-fiber bundle 
which gives the $3$-Sasakian homogeneous
manifold $S={\mathrm{SO}}(n+4)/({\mathrm{SO}}(n)\times {\mathrm{Sp}}(1))$. To make the description more explicit,
recall that the double cover of $\mathrm{SO}(4)$ is $\mathrm{Spin}(4)\cong \mathrm{SU}(2)_+\times \mathrm{SU}(2)_-$, where
we label the two copies of $\mathrm{SU}(2)$ according to whether they act on $\bigwedge^2_\pm \RR^4$ of the
standard representation $\RR^4$ of $\SO(4)$. 
By projecting onto $\mathrm{SU}(2)_+$, we have a map $\mathrm{Spin}(4) \to \mathrm{SU}(2)_+$, which descends to 
a map $\mathrm{SO}(4)\to \mathrm{SU}(2)_+/\mathbb{Z}_2 \cong \mathrm{SO}(3)$. The kernel is $\mathrm{SU}(2)_-
< \mathrm{SO}(4)$. This gives the fibration 
 $$
 \frac{\mathrm{SO}(4)}{\mathrm{SU}(2)_-} \cong \mathrm{SO}(3) 
{\color{blue} \, \hookrightarrow \,}
S=\frac{{\mathrm{SO}}(n+4)}{{\mathrm{SO}}(n)\times {\mathrm{SU}}(2)_-}
 \, \longrightarrow \,
\widetilde{\mathbb{G}r}_4({\mathbb{R}}^{n+4})=\frac{{\mathrm{SO}}(n+4)}{{\mathrm{SO}}(n)\times {\mathrm{SO}}(4)},
 $$
that determines the $3$-Sasakian manifold $S$. 

To compute $p_1(S)$, consider the 
universal real oriented rank $4$-bundle $V \to \widetilde{\mathbb{G}r}_4({\mathbb{R}}^{n+4})$. By the description
of the cohomology (\ref{cohom:realgrasseven}), the roots corresponding to $V$ are $y_1,y_2$. 
The two real rank $3$-bundles $\bigwedge^2_+V, \bigwedge^2_-V$ are $\mathrm{SO}(3)=
\mathrm{SU}(2)/\mathbb{Z}_2$-bundles, associated to the two morphisms
$\mathrm{SO}(4)\to \mathrm{SU}(2)_\pm/\mathbb{Z}_2$. The roots of $\bigwedge^2_+V$ are $1,y_1+y_2, - y_1-y_2$,
and  the roots of $\bigwedge^2_-V$ are $1,y_1-y_2, - y_1+y_2$. Let us see this: we take $V=L_1\oplus L_2$, where
$L_1, L_2$ are $\SO(2)=\U(1)$-bundles with $y_1=c_1(L_1)$, $y_2=c_1(L_2)$. Then $\bigwedge^2_+\ox \CC=\bigwedge^{2,0}\oplus
\CC\omega \oplus \bigwedge^{0,2}\cong \CC \oplus (L_1\ox L_2) \oplus (\overline L_1\ox \overline L_2)$. The other case is similar.

The bundle associated to $S$ is $\bigwedge^2_+V$. So
 $$
 [\Omega]=p_1(S)=p_1(\bigwedge\nolimits^2_+V)= - c_2(\bigwedge\nolimits^2_+V\ox \CC)= (y_1+y_2)^2 = l+2x,
 $$
since $l=y^2_1+y^2_2$ and $x=y_1y_2$. Note that $p_1(\bigwedge^2_-V)= (y_1-y_2)^2=l-2x$, and that the
cohomology $H^*(\widetilde{\mathbb{G}r}_4({\mathbb{R}}^{n+4}))$ is invariant under 
$x \mapsto -x$.
\end{proof}

The following result will be useful for our purposes.

\begin{lemma}\label{lem:linearfactorsigma}
Consider the cohomology class $\tilde\sigma_r$ of degree $2r$ $(r \geq 0),$ defined by \eqref{eq:sigmaireal}, 
as a polynomial $\tilde\sigma_r=\tilde\sigma_r(l,x)$ in $l$ and $x$, 
and let $a,b\in \mathbb{Q}$, $(a,b)\neq (0,0)$.  Then
$\tilde\sigma_r$ factors through $al+bx$ if and only if one of the following
statements holds:
\begin{enumerate}
\item $r$ is odd and $b=0$;
\item $r\equiv2\pmod3$ and $|a|=|b|$.
\end{enumerate}
\end{lemma}
\begin{proof}
In order to determine the linear factors  
$al+bx$ of $\widetilde\sigma_r(l,x)$ for any $r$, we consider the generating function $\sum_{r=0}^\infty \widetilde\sigma_r(l,x)t^r$.
The recursive definition~\eqref{eq:sigmaireal} of $\widetilde\sigma_r(l,x)$ can be rewritten as the equality $(1+lt+x^2t^2)(\widetilde\sigma_0+\widetilde\sigma_1t+\widetilde\sigma_2 t^2+\cdots)=1$, so
\begin{equation}\label{eq:ratfunction3}
\sum_{r=0}^\infty \widetilde\sigma_r(l,x)t^r=\frac1{1+lt+x^2t^2}.
\end{equation}
Next, we are going to calculate the values $a,b\in \mathbb{Q}$ such that $al+bx$ divides $\widetilde\sigma_r(l,x)$.
First, let us consider the case $b=0$. Clearly, $al$ divides $\widetilde\sigma_r(l,x)$ if and only if $\widetilde\sigma_r(0,x)=0$. 
Then, the expansion series on $x\,t$ of~\eqref{eq:ratfunction3} becomes
\[
\sum_{r=0}^\infty \widetilde\sigma_r(0,x)t^r=\frac1{1+(xt)^2}=\sum_{r=0}^\infty (-1)^r (xt)^{2r}.
\]
Hence
\[
\widetilde\sigma_r(0,x)=\begin{cases}
(-1)^{r/2} x^r &\text{if $r$ is even,}\\
0&\text{if $r$ is odd.}
\end{cases}
\]
Therefore, $al$ divides $\widetilde\sigma_r(l,x)$ if and only if $r$ is odd. This is the condition~(1) of the statement of this Lemma.

Assume $b\neq 0$. Let $\lambda=\tfrac{a}{b}\in \mathbb{Q}$. 
Clearly, $a l+bx$ divides $\widetilde\sigma_r(l,x)$ if and only if $\widetilde\sigma_r(l,-\lambda l)=0$. 
We will determine the values of $\lambda\in\mathbb{Q}$ such
that $\widetilde\sigma_r(l,-\lambda l)=0$. First, we deal with $\lambda=0,\pm\frac12$.
\begin{itemize}
\item If $\lambda=0$, the expansion series on $lt$ of~\eqref{eq:ratfunction3} becomes
\begin{equation*}
\sum_{r=0}^\infty \widetilde\sigma_r(l,0)t^r =\frac1{1+lt}=\sum_{r=0}^r (-1)^rl^rt^r, 
\end{equation*} 
which implies that $\sigma_r(l,0)=(-1)^rl^r$, and so $bx$ does not divide $\widetilde\sigma_r(l,x)$ for any $r\geq0$. 

\item If $\lambda=\pm\tfrac12$, the expansion series on $lt$ of~\eqref{eq:ratfunction3} yields
\begin{equation*}
\sum_{r=0}^\infty \widetilde\sigma_r(l,\pm\tfrac12l)t^r=\frac1{1+lt+\tfrac14(lt)^2}=\frac1{(1+\tfrac12lt)^2}=\sum_{r=0}^\infty (-1)^r\frac{r+1}{2^r}(lt)^r.
\end{equation*}
This implies that $\widetilde\sigma_r(l,\pm\tfrac12l)=(-1)^r\frac{r+1}{2^r}l^r$. Thus, if $2|a|=|b|$, then $al+bx$ 
does not divide $\widetilde\sigma_r(l,x)$ for any $r\geq0$. 
\end{itemize}

Now, consider $\lambda\neq 0,\pm\tfrac12$. 
As before, we compute an explicit expression of $\widetilde\sigma_r(l,-\lambda l)$ by means of~\eqref{eq:ratfunction3}. 
We notice that \eqref{eq:ratfunction3} can be written as follows
\[
\sum_{r=0}^\infty \widetilde\sigma_r(l,-\lambda l)t^r=\frac1{1+lt+\lambda^2 (lt)^2}=\frac1{(\alpha-\beta)\lambda^2}\left(\frac1{lt-\alpha}-\frac1{lt-\beta}\right),
\]
where $\alpha,\beta\in\mathbb{C}$ such that
\begin{equation}\label{eq:alphabetaR}
\alpha+\beta=-\frac1{\lambda^2}\text{ and }\alpha\cdot\beta=\frac1{\lambda^2}.
\end{equation}
This can be done because we are taking $\lambda\neq 0,\pm\frac12$.
Moreover, by \eqref{eq:alphabetaR}, we have $\alpha-\beta=0$ if and only if $\lambda=\pm\frac12$.
We write each fraction as a geometric power series and arrange this as power series on $lt$:  
\begin{align*}
\sum_{r=0}^\infty \widetilde\sigma_r(l,-\lambda l)t^r&=\frac1{(\alpha-\beta)\lambda^2}\left(\frac1{lt-\alpha}-\frac1{lt-\beta}\right)\\
&=\frac1{(\alpha-\beta)\lambda^2}\left({1\over \beta}\sum_{r=0}^\infty \left(\frac{lt}{\beta}\right)^n-{1\over \alpha}\sum_{r=0}^\infty \left(\frac{lt}{\alpha}\right)^r\right)\\
&=\sum_{r=0}^\infty \frac1{(\alpha-\beta)\lambda^2} \left(\frac1{\beta^{r+1}}-\frac1{\alpha^{r+1}}\right)(lt)^r\\
&=\sum_{r=0}^\infty \frac{(\alpha/\beta)^{r+1}-1}{\alpha^{r+1}(\alpha-\beta)\lambda^2}l^rt^r.
\end{align*}
Hence, 
\begin{equation}\label{eq:sigmagenlambdaR}
\widetilde\sigma_r(l,-\lambda l)=\frac{(\alpha/\beta)^{r+1}-1}{\alpha^{r+1}(\alpha-\beta)\lambda^2}l^r,
\end{equation}
and $\widetilde\sigma_r(l,-\lambda l)=0$ if and only if $\alpha/\beta$ is a $(r+1)$th root of unity. Note that once we fix a value for $\alpha/\beta$, the equations~\eqref{eq:alphabetaR} determine $\lambda\in \mathbb{Q}$ up to sign. Then, we look for $\alpha/\beta$ which comes from $\alpha,\beta\in\mathbb{C}$ satisfying~\eqref{eq:alphabetaR}, and $k\geq1$ such that $\alpha/\beta$ is a primitive $k$th root of unity. 
We distinguish two possibilities, namely $\alpha/\beta\in\mathbb{Q}$ and 
$\alpha/\beta\not\in\mathbb{Q}$.

Assume $\alpha/\beta\in\mathbb{Q}$. In such a case, $\alpha/\beta$ is a root of unity if and only if $\alpha/\beta=\pm1$. 
\begin{itemize}
\item  $\alpha/\beta=1$ is equivalent to $\alpha-\beta=0$, which corresponds to the case $\lambda=\pm\tfrac12$ discussed previously;
\item $\alpha/\beta=-1$ is equivalent to $\alpha+\beta=0$. But this contradicts the first equality in~\eqref{eq:alphabetaR}.
\end{itemize}

If $\alpha/\beta\not\in\mathbb{Q}$, we check that $\alpha/\beta$ is a primitive $k$th root of unity by means of its minimal polynomial over $\mathbb{Q}$. More concretely, by uniqueness of the minimal polynomials, $\alpha/\beta$ is a primitive $k$th root of unity if and only its minimal polynomial is equal to the $k$th cyclotomic polynomial $\Phi_k(X)$. Therefore, $\alpha/\beta$ is a $(r+1)$th root of unity if and only if its minimal polynomial is equal to a $k$th cyclotomic polynomial for some $k$ divisor of $r+1$ or, equivalently, $r+1\equiv 0\pmod{k}$. 

Now, since $\alpha/\beta\not\in\mathbb{Q}$, its minimal polynomial over $\mathbb{Q}$ is of degree greater than or equal to~$2$. 
Taking into account~\eqref{eq:alphabetaR} we have that  
\[
-\left(\frac\alpha\beta+\frac\beta\alpha\right) =  -\frac{(\alpha+\beta)^2-2\alpha\beta}{\alpha\beta}=-\frac1{\lambda^2}+2,\qquad 
\frac\alpha\beta\cdot\frac\beta\alpha=1
\]
are rational numbers, and so $\alpha/\beta$ is a root of the polynomial $Q_\lambda(X)=X^2+\left(2-\tfrac1{\lambda^2}\right)X+1$. 
Thus $Q_\lambda(X)$ must be the minimal polynomial of $\alpha/\beta$
because it is of the lowest degree and $\mathbb{Q}$-irreducible. Now, let us check when $Q_\lambda(X)$ is equal to a cyclotomic polynomial. Since $\deg Q_
\lambda(X)=2$, the only possibilities are the cyclotomic polynomials $\Phi_k(X)$ of degree~$2$, that is, $\Phi_k(X)$ 
with $k=3,4$ and~$6$. We discuss each of these three cases:
\begin{itemize}
\item For $k=3$, the equality $Q_\lambda(X)=\Phi_3(X)=X^2+X+1$ 
implies $\frac{a}{b}=\lambda=1$ or $\frac{a}{b}=\lambda=-1$.
This means that $\alpha/\beta$ is the $3$rd root of unity and, by~\eqref{eq:sigmagenlambdaR}, $\widetilde\sigma_r(l,-\lambda l)=0$, for $r+1\equiv0\pmod3$ and $\lambda=\pm1$. Therefore, 
 if $|a|=|b|$, $al+bx$ divides $\widetilde\sigma_r(l,x)$ when $r\equiv 2\pmod3$, which is the condition~(2) of the statement of this Lemma.
\item For $k=4$, the equality $Q_\lambda(X)=\Phi_4(X)=X^2+1$ implies $\lambda=\tfrac{\sqrt2}2\not\in\mathbb{Q}$. But this is not possible because $\lambda$ must be a rational number.
\item For $k=6$, the equality $Q_\lambda(X)=\Phi_6(X)=X^2-X+1$ implies $\lambda=-\tfrac{\sqrt{3}}3\not\in\mathbb{Q}$,  which is not possible because $\lambda$ must be a rational number.
\end{itemize}
Finally, for the remaining values of $\lambda\in\mathbb{Q}$ (that is, $\lambda\neq-1$, and $\lambda\neq0,\pm\tfrac12$ which were discussed previously as specific cases), the minimal polynomial $Q_\lambda(X)$ of $\alpha/\beta$ is 
not a cyclotomic polynomial, and so $\alpha/\beta$ is not a root of unity. By~\eqref{eq:sigmagenlambdaR}, $\widetilde\sigma_r(l,-\lambda l)\neq0$,
 for any $r\geq0$. Therefore, $al+bx$ with $a/b=\lambda$ does not divide $\widetilde\sigma_r(l,x)$ for $r\geq0$.
\end{proof}

 To prove the formality of ${\mathrm{SO}}(n+4)\,/({\mathrm{SO}}(n)\times {\mathrm{Sp}}(1))$ with $n=2m\geq 4$
we study firstly the case when $n=2m\geq 6$.

\begin{theorem}\label{thm:prop:GrRevenformal}
 Consider  the principal $F$-fiber bundle 
$F \,\rightarrow \,S \,\rightarrow\, \widetilde{\mathbb{G}r}_4(\RR^{n+4})$ with $n=2m\geq 6$,
$F=\SU(2)$ or $F=\SO(3)$ and Euler class $e(S)=a l + bx$,  where $a, b \in\mathbb{Q}$.
Then $S$ is not formal if and only if one of the following statements is satisfied:
\begin{enumerate}
\item  $m$ is odd, $a\neq 0$ and $b=0$;
\item $m\equiv 2\pmod3$ and $|a|=|b|\neq 0$.
\end{enumerate}
In particular, for $n=2m\geq 6$, the homogeneous $3$-Sasakian manifold 
${\mathrm{SO}}(n+4)\,/({\mathrm{SO}}(n)\times {\mathrm{Sp}}(1))$ is formal.
\end{theorem}

\begin{proof}
 Since $\widetilde{\mathbb{G}r}_4(\RR^{n+4})$ is a compact positive quaternionic K\"ahler manifold, 
$\widetilde{\mathbb{G}r}_4(\RR^{n+4})$ is simply connected \cite{Salamon1}. Then, according with section \ref{min-models}, 
the fibre bundle $F \rightarrow \, S \,\rightarrow\, \widetilde{\mathbb{G}r}_4(\RR^{n+4})$, with $F=\SU(2)$ or $F=\SO(3)$ and Euler class
$a l + b x$,  is a rational $S^3$-fibration. Thus ~\cite{RS},
 if $(\mathcal{A},d_{\mathcal{A}})$ is a model of
$\widetilde{\mathbb{G}r}_4(\RR^{n+4})$, a model of $S$ is 
$(\mathcal{A}\otimes \bigwedge(u),d)$ with $|u|=3$,
$d|_{\mathcal{A}}=d_{\mathcal{A}}$ and $du=al+bx$.
On the other hand, $\widetilde{\mathbb{G}r}_4(\RR^{n+4})$ is formal by
Theorem~\ref{th:formalquat}, and hence a model of this space is
$(H^\ast(\widetilde{\mathbb{G}r}_4(\RR^{n+4})),0)$. 
Therefore, a model of $S$ is the differential graded algebra
\begin{equation} \label{model: S1}
\Big(H^\ast(\widetilde{\mathbb{G}r}_4(\RR^{n+4}))\otimes \bigwedge(u),d\Big),
\end{equation}
where $|u|=3$ and $du=al+bx$.

We show first that if one of the conditions $(1)$ or $(2)$ stated in this Theorem is satisfied, then $S$ is non-formal.
By Lemma \ref{lemm:massey-models},
we know that Massey products on a manifold $M$ can be computed
by using any model for $M$. Let us prove that the triple Massey product $\langle x, z, z\rangle$ is defined 
on $S$ and it is non-zero by using 
the model of $S$ given by \eqref{model: S1}.
 By \eqref{cohom:realgrasseven} and \eqref{model: S1},  $x\,z=0$ on $S$ because $x\,z=0$ in $H^*\big(\widetilde{\mathbb{G}r}_4(\RR^{n+4})\big)$. 
Moreover, by Lemma ~\ref{lem:linearfactorsigma}, each of conditions $(1)$ and $(2)$ 
implies that $\widetilde\sigma_m=(al+bx)\,\tau$, for some non-zero cohomology 
class $\tau$ of degree $4m-4$ on $\widetilde{\mathbb{G}r}_4(\RR^{n+4})$
such that $\tau$ only depends of $l$ and $x$. Thus, taking into account  \eqref{model: S1},
 $\widetilde\sigma_m=d(u\,\tau)$ on $S$.
So, by \eqref{cohom:realgrasseven}, $z^2=d(\tau u)$ on  $S$. Therefore, the triple Massey product $\langle x, z, z\rangle$ is defined on $S$ and 
$$
\langle x, z, z\rangle= x\,\tau\, u.
$$

Let us see that $x\,\tau\, u$ is not exact. The element $x\,\tau\, u$ is
exact in the model \eqref{model: S1} of $S$ if $x\,\tau=0$ in
$H^\ast(\widetilde{\mathbb{G}r}_4({\mathbb{R}}^{n+4}))$, because the image
of the differential map is contained in
$H^\ast(\widetilde{\mathbb{G}r}_4({\mathbb{R}}^{n+4}))$. But
$x\,\tau=0$ in $H^\ast(\widetilde{\mathbb{G}r}_4({\mathbb{R}}^{n+4}))$ if 
and only if $x\,\tau$ belongs to the ideal
$(x\,z, z^2- \widetilde\sigma_{m}, \widetilde\sigma_{m+1})$. Let us to show
that this is not possible. Since
$4m=|x\,\tau|<|\widetilde\sigma_{m+1}|=4m+4$, $x\,\tau$ would be a
combination of $xz$ and $z^2-\widetilde\sigma_m$. We know that 
$x\,\tau$ is a combination of $l$ and $x$ because $\tau$ only depends of $l$ and $x$.  
On the other hand, any combination of $x\,z$ and
$z^2-\widetilde\sigma_m$ that is independent of $z$, it must be of degree greater than
or equal $4m+4$. Hence, $x\,\tau$ does not belong to the ideal $(x\,z, z^2-
\widetilde\sigma_{m}, \widetilde\sigma_{m+1})$ and thus $x\,\tau$ is non-zero. 
Therefore, $\langle x, z, z\rangle$ is a non-trivial Massey product on $S$ and, by Lemma \ref{lem:criterio1}, $S$ is non-formal.

Conversely, we must prove that if $S$ is non-formal, then one of conditions $(1)$ or $(2)$ is satisfied.
But this is equivalent to prove that $S$ is formal if $a=0$ or $a\not=0$ but $\widetilde\sigma_{m}$ on 
$S$ is non-zero (that is, $\widetilde\sigma_{m}$ does not factorize by $al + bx$). We study each of these two cases separately.
\begin{itemize}
 \item Suppose that  $a\neq0$ and $\widetilde{\sigma}_m\neq0$. Clearly, $l = -(b\,/a)\,x$ in $H^*(S)$ since $a\not=0$ and, by \eqref{model: S1}, 
 $du=al+bx$. 
 Then, \eqref{eq:realsigma} implies that the cohomology class $\widetilde{\sigma}_m$ on $S$ has the following expression
 \begin{align*}
 & \widetilde{\sigma}_m =  x^{m} \sum_{k=0}^{\lfloor m/2\rfloor}(-1)^{m+k}\binom{m-k}{k} \left(-\frac{b}{a}\right)^{m-k}. 
 \end{align*}
Thus the cohomology class $z^2$ on $S$ is a multiple of the cohomology class $x^m$ because, by \eqref{cohom:realgrasseven},
$z^2$ and $\widetilde{\sigma}_m$ are the same cohomology class on $\widetilde{\mathbb{G}r}_4({\mathbb{R}}^{n+4})$ and so on $S$. 
(Note that if $m$ is odd, the condition $\widetilde\sigma_{m}\not=0$ implies $b\not=0$, but if $m$ is even then $b$ may be $0$.) 
Moreover, the class $x^{m+1}$ is zero on $S$ since it is a multiple of  
$z(xz)-x(z^2-\bar\sigma_m)$, which is zero on $\widetilde{\mathbb{G}r}_4(\RR^{n+4})$ by (\ref{cohom:realgrasseven}).
Taking into account \eqref{cohom:realgrasseven} and the model of $S$ given by \eqref{model: S1}, we have that
the non-zero cohomology groups of $S$ up to the degree $2n+1$ are
  \begin{align*}
& H^{0}(S)=\langle 1\rangle\,, \qquad H^{4k}(S)=\langle x^k\rangle, \quad 0 \leq k \leq m,  \quad \text{ and} \quad  4k\neq 2m,\\
&H^{2m}(S)=\begin{cases}\langle x^{m/2}, z\rangle,&\text{ if $m$
 is even,}\\ \langle z\rangle,&\text{ if $m$ is odd.} \end{cases}
  \end{align*}
Therefore, the minimal model of $S$ must be a differential graded algebra
$(\bigwedge V,d)$, being $\bigwedge V$ the free algebra of the form
$\bigwedge V=\bigwedge(a_4, a_{2m}, v_{2m+3}, v_{4m-1})\otimes \bigwedge V^{\geq (2n+2)}$, where $|a_i|=i$, for $i=4, 2m$, 
$|v_{j}|=j$, for $j= 2m+3, 4m-1$, and $d$ is defined by $da_i=0$, $dv_{2m+3}=a_{4}\,a_{2m}$ and $dv_{4m-1}=a_{4}^{m}-a_{2m}^2$. 
Let us prove that $S$ is $(4m+1)$-formal. Any element $\alpha \in I(N^{\leq 4m+1})$ is of the form $\alpha=P_1\, v_{2m+3}+P_2\, v_{4m-1}+P_3 \, v_{2m+3}\, v_{4m-1}$, where
$P_1,P_2,P_3\in \bigwedge(a_4, a_{2m})$. If $\alpha$ is a closed element
of even degree, then $\alpha=P_3 \,  v_{2m+3}\, v_{4m-1}$. But 
$d\alpha=0$ implies  $P_3=0$, and so $\alpha$ is trivially exact.  If
$\alpha$ is a closed element of odd degree, then
$\alpha=P_1\, v_{2m+3}+P_2\, v_{4m-1}$. The condition $d\alpha=0$ implies $P_1=P \, (a_{2m}^2-a_{4}^m)$ and
$P_2=-P \, a_{4} \, a_{2m}$, for some $P\in\bigwedge(a_4, a_{2m})$. Hence,
$\alpha=d(P  \, v_{2m+3}\, v_{4m-1})$ is exact. This proves that $S$ is $(4m+1)$-formal and, by Theorem~\ref{fm2:criterio2}, $S$ is formal.

 \item Consider the case $a=0$, so $du=bx$. If $b=0$, then $S$ is clearly formal because
$S$ is rationally equivalent to $S^3\times \widetilde{\mathbb{G}r}_4(\RR^{n+4})$, which
is formal being the product of two formal manifolds.

Suppose now that $a=0$ and $b\neq 0$. The condition $du = bx$ and
 \eqref{eq:realsigma} imply that the element $\widetilde{\sigma}_m$ of the model of $S$ given by \eqref{model: S1} 
 has the following expression
 \begin{align*}
 & \widetilde{\sigma}_m =  (-1)^{m} l^{m} + \text{ elements which are exact in}\,  \eqref{model: S1}.
 \end{align*}
Thus the cohomology class $z^2$ on $S$ is  $\pm\,l^m$ because, by \eqref{cohom:realgrasseven},
$z^2$ and $\widetilde{\sigma}_m$ are the same cohomology class on $\widetilde{\mathbb{G}r}_4({\mathbb{R}}^{n+4})$.
Similarly, using again  \eqref{eq:realsigma}, the condition $du = bx$ on the model of $S$, and taking into account that, 
by \eqref{cohom:realgrasseven},  $\widetilde{\sigma}_{m+1}=0$ on 
$\widetilde{\mathbb{G}r}_4({\mathbb{R}}^{n+4})$, one can check that $l^{m+1}$ is the zero on $S$.
the non-zero cohomology groups of even degree of $S$ up to degree $(2n+1)$ are
 \begin{align*}
&H^{0}(S) =\langle1 \rangle\,, \qquad H^{4j}(S)=\langle l^j\rangle\,, \,\,\,\, 1 \leq j \leq m,  \\
&H^{2m+4k}(S)=\langle l^k\,z\rangle\,, \,\,\,\, 0 \leq k \leq {\lfloor m/2\rfloor}\,,
 \end{align*}
 if $m$ is odd; and
 \begin{align*}
 &H^{0}(S) =\langle1 \rangle\,, \qquad H^{4k}(S)=\begin{cases}
  \langle l^k\rangle, &\text{ if }0\leq k<\tfrac{m}2,\\
   \langle l^k,l^{k-(m/2)}z\rangle,&\text{ if }\tfrac{m}2\leq k\leq m,\\
  \end{cases}
   \end{align*}
if $m$ is even. But in both cases, 
$S$ has non-zero cohomology groups of odd degree up to degree $2n+1$, namely
$$
H^{2m+3+4k}(S)=\langle l^kz\,u\rangle,
$$ 
for $0\leq k < \lfloor m/2\rfloor$. Therefore, the minimal model of $S$ must be a differential graded algebra $(\bigwedge V, d)$, being
$\bigwedge V$ the free algebra of the form $\bigwedge V=\bigwedge(a_{4}, a_{2m}, a_{2m+3}, v_{4m-1})\otimes V^{\geq (2n+2)}$, 
where $|a_i|=i$ $(i= 4, 2m, 2m+3)$, $|v_{4m-1}|=4m-1$, 
$da_{4}=da_{2m}=da_{2m+3}=0$ and
$dv_{4m-1}=a_{2m}^2-a_{4}^m$. Clearly, $S$ is $(4m+1)$-formal, and hence it is formal by
Theorem~\ref{fm2:criterio2}.
\end{itemize}

Let us consider the $3$-Sasakian homogeneous space 
$S={\mathrm{SO}}(n+4)\,/({\mathrm{SO}}(n)\times {\mathrm{Sp}}(1))$ with $n=2m\geq 6$. 
From \eqref{eqn:eulerclassSO(3)}, Theorem \ref{class-homog-3-sasaki} and Proposition \ref{prop:formgrassr}, 
$S$ is the $\SO(3)$-bundle
$\SO(3)\rightarrow \, S \,\rightarrow\, \widetilde{\mathbb{G}r}_4(\RR^{n+4})$ with Euler
class $-\frac14(l+2x)$. Hence, none
of the conditions $(1)$ and $(2)$ stated in this Theorem is satisfied because
now $a=-\frac{1}{4} \not =-\frac{1}{2}=b$. Thus, $S$ is formal.
Note that the minimal model of $S$ is the minimal model previously described for 
$a\neq0$ and $\widetilde{\sigma}_m\neq0$.
\end{proof}

 Now, in order to show that that the $3$-Sasakian homogeneous space 
${\mathrm{SO}}(n+4)\,/({\mathrm{SO}}(n)\times {\mathrm{Sp}}(1))$ is formal,
for $n=2m\geq 4$, only remains to prove that this space is formal for $n=4$.
For this, note that
\eqref{cohom:realgrasseven} implies that, for $n=4$, the cohomology ring  of $\widetilde{\mathbb{G}r}_4(\RR^8)$ is
\begin{equation}\label{cohom:GrR8}
	H^\ast(\widetilde{\mathbb{G}r}_4(\RR^8))=\mathbb{Q}[l,x,z]/(xz,z^2-l^2+x^2,l^3-2lx^2),
\end{equation}
where $|x|=|y|=|z|=4$.

\begin{theorem}\label{thm:prop:GrR8formal}
Let $F=\SO(3), \SU(2)$. Consider a fiber bundle $F\to S\to \widetilde{\mathbb{G}r}_4(\RR^{8})$ with
Euler class $e(S)=a l + bx+cz$,  where $a, b, c\in\mathbb{Q}$. Then,  $S$ is not formal if and only if one of the following statements is satisfied:
 \begin{enumerate}
 \item $|a|=|b|\neq0$ and $|a|\neq |c|$,
 \item $|a|=|c|\neq0$ and $|a|\neq |b|$.
 \end{enumerate}
 In particular, the homogeneous $3$-Sasakian manifold ${\mathrm{SO}}(8)\,/({\mathrm{SO}}(4)\times {\mathrm{Sp}}(1))$ 
is formal.
\end{theorem}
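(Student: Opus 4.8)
The plan is to follow the strategy of Theorem~\ref{thm:prop:GrRevenformal}. Since $\widetilde{\mathbb{G}r}_4(\RR^{8})$ is simply connected \cite{Salamon1} and formal (Theorem~\ref{th:formalquat}), the bundle $F\to S\to\widetilde{\mathbb{G}r}_4(\RR^{8})$ is a rational $S^3$-fibration, so by \cite{RS} a model of $S$ is $\mathcal{A}=\big(H^\ast(\widetilde{\mathbb{G}r}_4(\RR^{8}))\otimes\bigwedge(u),d\big)$ with $|u|=3$, $d|_{H^\ast}=0$ and $du=e(S)=al+bx+cz$, where $H^\ast(\widetilde{\mathbb{G}r}_4(\RR^{8}))$ is the ring \eqref{cohom:GrR8}. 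If $e(S)=0$ then $S$ is rationally $S^3\times\widetilde{\mathbb{G}r}_4(\RR^{8})$, a product of formal spaces and hence formal, so from now on $e(S)\neq 0$. The ring \eqref{cohom:GrR8} carries the automorphisms $l\mapsto\pm l$, $x\mapsto\pm x$, $z\mapsto\pm z$ and the transposition $x\leftrightarrow z$ (each preserves the defining ideal; for the transposition one uses $l^3=2lx^2=2lz^2$), under which the two conditions of the statement are interchanged; so one may normalise $a,b,c\ge 0$ and it suffices to treat the condition ``$a=b\neq 0$, $c\neq a$'' and invoke the $x\leftrightarrow z$ automorphism for the symmetric one.

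A few structural facts about $\mathcal{A}$ are used throughout. Since $H^{\mathrm{odd}}(\widetilde{\mathbb{G}r}_4(\RR^{8}))=0$ and $|u|=3$, the algebra $\mathcal{A}$ is supported in degrees $\equiv 0,3\pmod 4$; in particular its degree $10$ part vanishes, so a triple Massey product of three classes of degree $4$, when defined, is represented by a single element of $H^8(\widetilde{\mathbb{G}r}_4(\RR^{8}))\otimes u$ and is exact precisely when that element is zero. Moreover $H^7(S)$ is isomorphic to $\{\omega\in H^4(\widetilde{\mathbb{G}r}_4(\RR^{8})) : \omega\cdot e(S)=0\}$, and solving the corresponding $4\times 3$ linear system one checks that this annihilator vanishes unless $|a|=|b|=|c|\neq 0$ or $e(S)$ is proportional to $x$ or to $z$.

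For the non-formality direction, assume $a=b\neq 0$ and $c\neq a$. Solving the linear system expressing $z^2=l^2-x^2$ as $\tau\cdot e(S)$ in $H^\ast(\widetilde{\mathbb{G}r}_4(\RR^{8}))$ produces a class $\tau$ of degree $4$ (unique, since $H^7(S)=0$ in this range) with $x\tau=\tfrac{a}{a^2-c^2}(lx-x^2)\neq 0$; the hypothesis $a\neq 0$ is essential, because when $a=0$ one is forced to take $\tau$ proportional to $z$ and then $x\tau=xz=0$. Together with $xz=0$ this shows the triple Massey product $\langle x,z,z\rangle$ is defined, and computing it in $\mathcal{A}$ (Lemma~\ref{lemm:massey-models}) gives $\langle x,z,z\rangle=[(x\tau)\,u]$; since $H^7(S)=0$ its indeterminacy is trivial, and by the degree count above $(x\tau)u$ is a non-zero cohomology class. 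Hence $S$ is non-formal by Lemma~\ref{lem:criterio1}, and the case $|a|=|c|\neq 0$, $|b|\neq |a|$ follows via the $x\leftrightarrow z$ automorphism.

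For the formality direction, suppose neither condition holds; one proves that $S$ is $9$-formal, which yields formality by Theorem~\ref{fm2:criterio2} since $\dim S=19$. The argument divides according to the rank of multiplication $e(S)\cdot\colon H^4\to H^8$ of $\widetilde{\mathbb{G}r}_4(\RR^{8})$: the generic situation (rank $3$, so $H^7(S)=0$ and $H^8(S)$ one-dimensional) and the degenerate ones, namely $|a|=|b|=|c|\neq 0$ and $e(S)$ proportional to $l$, $x$ or $z$. In each case one writes down the minimal model of $S$ through degree $9$ --- two closed generators in degree $4$, together with the generators in degree $7$ killing the quadratic relations among them, plus one closed generator of degree $7$ in the degenerate cases where $H^7(S)\neq 0$ --- and verifies the conditions of Definition~\ref{def:primera}: $N^i=0$ for $i\le 6$ and for $i=8,9$, while every closed element of the ideal generated by $N^7$ is exact, which reduces to a short polynomial computation in the two degree $4$ generators. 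I expect this case analysis to be the main obstacle; the delicate points are the degenerate cases, where $H^7(S)\neq 0$ makes the indeterminacy of the potential Massey products large enough that no obstruction to formality survives, and keeping track of which quadratic relation each degree $7$ generator kills. Finally, exactly as in Proposition~\ref{prop:formgrassr} (the computation with $\bigwedge\nolimits^2_+V$ does not depend on the dimension), the quaternionic K\"ahler form on $\widetilde{\mathbb{G}r}_4(\RR^{8})$ satisfies $[\Omega]=l+2x$, so $e(S)=-\tfrac14 p_1(S)=-\tfrac14(l+2x)$ by \eqref{eqn:eulerclassSO(3)}; thus $(a,b,c)=(-\tfrac14,-\tfrac12,0)$, for which $|a|\neq|b|$ and $|a|\neq|c|$, so neither non-formality condition is met and ${\mathrm{SO}}(8)/({\mathrm{SO}}(4)\times{\mathrm{Sp}}(1))$ is formal.
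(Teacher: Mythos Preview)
Your non-formality argument is correct and actually a bit cleaner than the paper's: you exploit the defining relation $xz=0$ directly to set up the Massey product $\langle x,z,z\rangle$, and the ring automorphism $x\leftrightarrow z$ (which the paper does not observe) lets you treat the two conditions symmetrically. The paper instead introduces the basis $\xi_1=l-x+z$, $\xi_2=-l-x+z$ of $H^4(S)$, notes $\xi_1\xi_2=0$, and computes $\alpha_1\xi_1^2+\alpha_2\xi_2^2=2(al-cx-bz)\,du$ with $\alpha_1=\tfrac12(a-b)(a+c)$, $\alpha_2=\tfrac12(a+b)(a-c)$; the non-trivial Massey products are $\langle\xi_1,\xi_1,\xi_2\rangle$ or $\langle\xi_2,\xi_2,\xi_1\rangle$, according to which $\alpha_i$ vanishes.

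For the formality direction your sketch is on the right track but misses the one non-automatic step. In the generic (rank~$3$) case the minimal model through degree~$9$ has $N^7=\langle v,w\rangle$ with $dv,dw$ spanning the $2$-dimensional kernel $K\subset\mathrm{Sym}^2(H^4(S))$, and the $9$-formality check ``$P_1v+P_2w$ closed $\Rightarrow$ exact'' works \emph{only if $dv$ and $dw$ are coprime} in $\mathbb{Q}[p,q]$; when $K$ has a common linear factor $\ell$ one gets the non-exact closed element $m_2v-m_1w$ (where $dv=\ell m_1$, $dw=\ell m_2$), which is exactly the Massey obstruction. The paper's choice of $\xi_1,\xi_2$ makes this transparent: the relations are $\xi_1\xi_2$ and $\alpha_1\xi_1^2+\alpha_2\xi_2^2$, which are coprime precisely when $\alpha_1\alpha_2\neq 0$, i.e.\ when neither condition of the theorem holds. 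Your rank-based case split is equivalent, but you should state and verify this coprimality (or, what is the same, that $K$ has no common factor) rather than leave it inside ``a short polynomial computation''. One minor slip: $e(S)\propto l$ has rank $3$, so it does not belong with your ``degenerate'' (rank $<3$) cases; the paper simply treats $b=c=0$ separately before introducing $\xi_1,\xi_2$, since those vectors fail to span $H^4(S)$ exactly when $b+c=0$.
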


\begin{proof}
Since $\widetilde{\mathbb{G}r}_4(\RR^{8})$ is simply connected and formal, proceeding as in the
proof of Theorem~\ref{thm:prop:GrRevenformal}, a model of $S$ is
\begin{equation}\label{eq:modelGrR8}
	\left(H^\ast(\widetilde{\mathbb{G}r}_4(\RR^{8}))\otimes
		\bigwedge(u),d\right),
\end{equation}
where $|u|=3$ and $du=al+bx+cz\in H^4(\widetilde{\mathbb{G}r}_4(\RR^{8}))$. 

In order to prove that if one of the conditions $(1)$ or $(2)$ stated in this Theorem is satisfied, then $S$ is non-formal, 
we consider first a new basis of the cohomology ring $H^\ast(\mathbb{G}r_4(\mathbb{R}^8))$.
For this we proceed as follows. In any of the cases $(1)$ or $(2)$, we have 
$(b,c)\neq (0,0)$. We can assume that $a,b,c\geq0$, and so $b+c>0$ since $(b,c)\neq (0,0)$. 
(In fact, it is sufficient to change $l$ by $-l$ if $a <0$, or $x$ by $-x$ if $b<0$, or $z$ by $-z$ if $c<0$.)
Now we define the basis 
$\xi_0,\xi_1,\xi_2$ of $H^{4}(\mathbb{G}r_4(\mathbb{R}^8))$ by
$$
\xi_0=al+bx+cz, \qquad \xi_1=l-x+z, \qquad \xi_2=-l-x+z.
$$
Hence, $\xi_0,\xi_1$ and $\xi_2$ generate the cohomology ring $H^\ast(\mathbb{G}r_4(\mathbb{R}^8))$,  and
the relations given by  \eqref{cohom:GrR8} can be written in terms of $\xi_0$, $\xi_1$ and $\xi_2$  as 
\begin{equation}
\begin{gathered}\label{eq:exactclasses}
\xi_1\xi_2, \qquad \frac{1}{4(b+c)^2}\left((a-b)(a+c)\xi_1^2+(a+b)(a-c)\xi_2^2\right)+\beta_4 \xi_0,\quad \text{ and }\\
\frac{2(a-c)^2-(b+c)^2}{8(b+c)^2}\left(\xi_1^3-\xi_2^3\right)+\beta_8\xi_0,
\end{gathered}
\end{equation}
where $\beta_4$ and $\beta_8$ are elements of the cohomology ring~\eqref{cohom:GrR8} of degree $|\beta_i|=i$, with 
\begin{equation}\label{eq:beta}
\beta_4=\frac1{(b+c)^2}\xi_0+\frac{-2a+b-c}{2(b+c)^2}\xi_1+\frac{2a+b-c}{2(b+c)^2}\xi_2\,.
\end{equation}

 According with the model of $S$ given by \eqref{eq:modelGrR8}, we have now $du=\xi_0$. 
Then, from  \eqref{eq:exactclasses}, $(a-b)(a+c)\xi_1^2+(a+b)(a-c)\xi_2^2$ and $\xi_1^3-\xi_2^3$ are exact in the model~\eqref{eq:modelGrR8}. 
 So, the non-zero cohomology groups of $S$ up to degree~12 are:
\begin{equation}\label{eq:H8GrR8}
	H^4(S)=\langle \xi_1,\xi_2\rangle, \,\, \qquad \,\,
 	H^8(S)=\begin{cases}
	    \langle \xi_1^2\rangle, &\text{ if $a\neq b$ and $a\neq c$},\\
 		\langle \xi_1^2\rangle,&\text{ if $a=b\neq 0$ and $a\neq c$},\\
 		\langle \xi_2^2\rangle,&\text{ if $a=c\neq 0$ and $a\neq b$},\\
 		\langle \xi_1^2,\xi_2^2\rangle,&\text{ if $a=b=c$ or $a=b=0$ or $a=c=0$},
\end{cases}
\end{equation}
   and 
  $$
 H^{12}(S)=\begin{cases}
 \langle \xi_1^3\rangle,& \text{ if $a=b=c$,  or $a=b=0$ or $a=c=0$}\\
 0,&\text{ otherwise.} \end{cases}
 $$
We may determine the odd degree cohomology groups by applying the Poincar\'e duality. 
If $a=b=c$ or $a=b=0$ or $a=c=0$, $H^7(S)$ is generated by the Poincar\'e dual $PD(\xi_1^3)$ of 
the non-zero cohomology class $\xi_1^3\in H^{12}(S)$.

To prove that $S$ is non-formal if the condition $(1)$ is satisfied, 
we apply Lemma~\ref{lemm:massey-models} to compute a non-trivial triple Massey product in the model~\eqref{eq:modelGrR8}.
Since $a,b,c\geq0$, the condition $(1)$ becomes $a=b\neq 0$ and $a\neq c$. 
From~\eqref{eq:exactclasses}, we have that $\xi_1\xi_2$ is zero. Moreover, the second class in~\eqref{eq:exactclasses} yields 
$\xi_2^2=-d\left(\frac{2(b+c)^2}{b(b-c)}\beta_4u\right)$ since $du=\xi_0$ on $S$. 
Therefore, the triple Massey product
$\langle \xi_2,\xi_2,\xi_1\rangle$ is defined and, using \eqref{eq:beta}, 
 we have
$$
 \langle \xi_2,\xi_2,\xi_1\rangle = -\frac{2(b+c)^2}{b(b-c)}\beta_4\,u\,\xi_2=\frac{1}{b(b-c)}\left(-2\xi_0-(3b-c)\xi_2+(b+c)\xi_1\right)\xi_1\,u,
$$
that is the class $\frac{(b+c)}{b(b-c)}\xi_1^2\,u\in H^{11}(S)$ since $\xi_{0}$ defines the zero cohomology class 
on $S$. Thus, the  triple Massey product $ \langle \xi_2,\xi_2,\xi_1\rangle$ is non-trivial because 
$b\not= 0$ and $a=b\neq c$. 
Hence $S$ is non-formal by Lemma~\ref{lem:criterio1}.

A similar argument shows that if condition $(2)$ is satisfied, then $S$ is non-formal. In fact, since $a,b,c\geq0$, condition $(2)$ is 
equivalent to $a=c\neq0$ and $a\neq b$.
Then, from \eqref{eq:exactclasses},
$\xi_1\xi_2$ is zero and $\xi_1^2=d\left(\tfrac{2(b+c)^2}{c(b-c)}\beta_4u\right)$ is exact in the model~\eqref{eq:modelGrR8}. 
Thus, the triple Massey product $\langle \xi_1,\xi_1,\xi_2\rangle$ is defined and by~\eqref{eq:beta} we have
\[
\langle \xi_1,\xi_1,\xi_2\rangle = \frac{2(b+c)^2}{c(b-c)}\beta_4\,u\,\xi_2=\frac1{c(b-c)}\left(2\xi_0+(b-3c)\xi_1+(b+c)\xi_2\right)\xi_2\,u.
 \]
This is the non-zero class $\frac{(b+c)}{c(b-c)}\xi_2^2\,u\in H^{11}(S)$. Hence $\langle \xi_1,\xi_1,\xi_2\rangle$ is non-trivial and, 
by Lemma~\ref{lem:criterio1}, $S$ is non-formal.

Now, we prove the converse statement, that is, if $S$ is non-formal, then one of the conditions $(1)$ or $(2)$ is satisfied. 
This is equivalent to prove that $S$ is formal if 
none of these conditions $(1)$ and $(2)$ is satisfied. But this happens in one of the following cases:
\begin{itemize}
\item If $a\neq b$ and $a\neq c$, the minimal model of $S$ is a differential graded algebra $(\bigwedge V,d)$, being $\bigwedge V=\bigwedge(a_4,b_4,v_7,w_7)\otimes \bigwedge V^{\geq 10}$ and $|a_4|=|b_4|=4$, $|v_7|=|w_7|=7$, and $da_4=0$, $db_4=0$, $dv_7=a_4b_4$, $dw_7=(a-b)(a+c)a^2_4+(a+b)(a-c)b^2_4$.  In order to prove the formality, let us consider a closed element $\alpha\in I(N^{\leq 9})$. Then $\alpha=P_1v_7+P_2w_7+P_3\, v_7 w_7$, where $P_1,P_2,P_3\in \bigwedge(a_4,b_4)$. If $\alpha$ is of even degree, $\alpha=P_3\, v_7 w_7$. Clearly, $d\alpha=0$ implies $P_3=0$, and hence $\alpha$ is trivially exact. If $\alpha$ is of odd degree, then $\alpha=P_1v+P_2w$. The condition $d\alpha=0$ implies that there exists $P\in \bigwedge(a_4,b_4)$ such that $P_1=P((a-b)(a+c)a^2_4+(a+b)(a-c)b^2_4)$ and $P_2=-P\, a_4 b_4$. Thus, $\alpha=d(P\, v_7 w_7)$ is exact. This proves the $9$-formality and therefore $S$ is formal by Theorem~\ref{fm2:criterio2}.

\item For the case $a=b=c$ or $a=b=0$ or $a=c=0$, 
recall that $H^{7}(S)$ is generated by $PD(\xi_1^3)$ the Poincar\'e dual of $\xi_1^3$. Then, the minimal model of $S$ is a differential graded algebra $(\bigwedge V,d)$, being 
$\bigwedge V=\bigwedge(a_4,b_4,c_7,v_7)\otimes \bigwedge V^{\geq 10}$ where $|a_4|=|b_4|=4$, $|c_7|=|v_7|=7$, 
and $da_4=db_4=dc_7=0$ and $dv_7=a_4b_4$. Clearly, $S$ is $9$-formal and, by Theorem~\ref{fm2:criterio2}, $S$ is formal.

\item For $b=c=0$. If $a=0$ then the fibration $S$ is rationally equivalent to $S^3\times \widetilde{\mathbb{G}r}_4(\mathbb{R}^8)$, which is formal being  the product of two formal manifolds. 
Then, assume that $a\neq 0$. Since $du = al$, $H^4(S)=\langle x,z\rangle$. The relations of~\eqref{cohom:GrR8} yield the equalities $xz=0$ and $x^2+z^2=\tfrac1{a}ldu$ in the model~\eqref{eq:modelGrR8}, so $H^8(S)=\langle x^2\rangle$ and $H^{4k}(S)=0$ for $k\geq3$. 
Then, the minimal model of $S$ is a differential graded algebra $(\bigwedge V,d)$, being $\bigwedge V$ the free algebra of the form $\bigwedge V=\bigwedge(a_4,b_4,v_7,w_7)\otimes \bigwedge V^{\geq 10}$, where $|a_4|=|b_4|=4$, $|v_7|=|w_7|=7$, and $da_4=0$, $db_4=0$, $dv_7=a_4b_4$, $dw_7=a^2_4+b^2_4$. To prove the
formality, take a closed element $\alpha\in I(N^{\leq 9})$. Then,
$\alpha=P_1v_7+P_2w_7+P_3\, v_7w_7$, where $P_1,P_2,P_3\in \bigwedge(a_4,b_4)$.
If $\alpha$ is of even degree, $\alpha=P_3\,v_7w_7$. The equality $d\alpha=0$
implies $P_3=0$, and hence $\alpha=0$ is exact. If $\alpha$ is of odd degree,
$\alpha=P_1v_7+P_2w_7$. Since $d\alpha=0$, there exists $P\in \bigwedge
(\alpha,\beta)$ such that $P_1=P(a^2_4+b^2_4)$ and $P_2=-P\,a_4b_4$. Thus, $\alpha=d(P\,v_7w_7)$
is exact. This proves that $S$ is $9$-formal and, by Theorem~\ref{fm2:criterio2}, $S$ is formal.
\end{itemize}

From \eqref{eqn:eulerclassSO(3)}, Theorem \ref{class-homog-3-sasaki} and Proposition \ref{prop:formgrassr}, 
we know that the $3$-Sasakian homogeneous space $S=\SO(8)/(\SO(4)\times \mathrm{Sp}(1))$ is 
the  $\SO(3)$-bundle $\SO(3)\to S\to \widetilde{\mathbb{G}r}_4(\mathbb{R}^8)$ with Euler class $-\frac14(l+2x)$. Then, 
$(a,b,c)=\left(-\tfrac14,-\tfrac12,0\right)$ and so none of the conditions~(1) and~(2) stated in the Theorem is satisfied. 
Thus, $S$ is formal. The minimal model of $S$ is the minimal model described before  for the case $a\neq b$ and $a\neq c$. 
\end{proof}

\subsection{$n$ is odd} If $n=2m+1\geq 3$, the cohomology of
$\widetilde{\mathbb{G}r}_4(\RR^{n+4})$ is 
 $$
 H^\ast(\widetilde{\mathbb{G}r}_4(\RR^{n+4}))=H^\ast(BT)^{W(\SO(n)\times
 \SO(4))}/H^{>0}(BT)^{W(\SO(n+4))},
 $$
where $BT$ is the classifying space of a maximal torus $T$ of $\SO(n+4)$ and
$W(G)$ denotes the Weyl group of a Lie group $G$. Then, the cohomology of
$\widetilde{\mathbb{G}r}_4(\RR^{n+4})$ are invariant polynomials of
$H^\ast(BT)=\mathbb{Q}[x_1,\ldots,x_{m+2}]$,  where $|x_i|=2$, for $1 \leq i \leq (n+2)$.
Denote by $y_1,y_2$ the
classes $x_{m+1},x_{m+2}$, respectively. On one hand,
$H^\ast(BT)^{W(\SO(n)\times \SO(4))}$ is generated by the symmetric polynomials
$\tilde\tau_k$ of $x_1^2,\ldots,x_{m}^2$ and the symmetric polynomials
$\tilde\sigma_k$ of $y_1^2,y_2^2$, and also by $y_1y_2$. On the other hand,
$H^{>0}(BT)^{W(\SO(n+4))}$ is
generated by the symmetric polynomials $\sigma_k$ of
$x_1^2,\ldots,x_{m}^2,y_1^2,y_2^2$. This gives the relation $\widetilde\sigma\cdot\tilde\tau=1$ between
$ \widetilde\sigma_k$ and $\tilde\tau_k$,  where
$ \widetilde\sigma=1+ \widetilde\sigma_1+ \widetilde\sigma_2+\ldots$ and
$ \widetilde\tau=1+\tilde\tau_1+ \widetilde\tau_2+\ldots$ Denote by $l$ and $x$ the classes $l=y_1^2+y_2^2$ and
$x=y_1\,y_2$. We have~\cite{Amann1}
\begin{equation}\label{cohom:GrRodd}
	H^\ast(\widetilde{\mathbb{G}r}_4(\RR^{n+4}))=\mathbb{Q}[l,x]/( \widetilde\sigma_{m+1}, \widetilde\sigma_{m+2}),
\end{equation} 
where $|l|=|x|=4$, and $\widetilde\sigma_r$ $(r \geq 0)$ is the cohomology class of degree $4r$ defined recursively as in
\eqref{eq:sigmaireal}. Note that $\widetilde\sigma_r$ also satisfies \eqref{eq:realsigma}.

The same proof as that given for Proposition \ref{prop:formgrassr} allows us to prove the following:
\begin{proposition}\label{prop:formgrassrODD}
Let $\Omega$ be  the quaternionic K\"ahler form on $\widetilde{\mathbb{G}r}_4({\mathbb{R}}^{n+4})$ with $n=2m+1\geq 3$.
Then, in terms of the generators $l$ and $x$ of $H^*(\widetilde{\mathbb{G}r}_4({\mathbb{R}}^{n+4}))$ given by \eqref{cohom:GrRodd},
the de Rham cohomology class $[\Omega]\in H^4(\widetilde{\mathbb{G}r}_4({\mathbb{R}}^{n+4}))$ is $[\Omega]=l+2x$.
\end{proposition}

\begin{theorem} \label{thm:prop:formalidadreal-final}
Consider a principal $F$-fiber bundle 
$F \,\rightarrow \,S \,\rightarrow\, \widetilde{\mathbb{G}r}_4(\RR^{n+4})$ with $n=2m+1\geq 3$,
$F=\SU(2)$ or $F=\SO(3)$ and Euler class $e(S)=a l + bx$, where $a, b \in\mathbb{Q}$.
Then, $S$ is formal. In particular, if $n=2m+1\geq 3$, the $3$-Sasakian homogeneous space 
${\mathrm{SO}}(n+4)\,/({\mathrm{SO}}(n)\times {\mathrm{Sp}}(1))$ is formal.
\end{theorem}

\begin{proof} 
Since $\widetilde{\mathbb{G}r}_4(\RR^{n+4})$ is simply connected and formal, proceeding as in the proof
of Theorem \ref{thm:prop:GrRevenformal}, we have that a model of $S$ is the differential graded algebra
\begin{equation} \label{model: S3}
(H^\ast(\widetilde{\mathbb{G}r}_4(\RR^{n+4}))\otimes \bigwedge(u),d),
\end{equation}
where $|u|=3$ and $du=al+bx$.

If $a=0=b$, then $S$ is rationally equivalent to $S^3\times \widetilde{\mathbb{G}r}_4(\RR^{n+4})$, which
is formal being the product of two formal manifolds.

Suppose now that $a\neq 0$ and $b=0$. Then, if $m$ is odd, using that $\widetilde{\sigma}_{m+1}=0$ on 
$\widetilde{\mathbb{G}r}_4(\RR^{n+4})$, from \eqref{eq:realsigma} we have
 \begin{align*}
\pm\, x^{m+1} + \text{ elements which are exact in}\,  \eqref{model: S3} =0
 \end{align*}
on $S$. This means that $x^{m+1}=0$ on $S$. Then, since $du=al$ by \eqref{model: S3},
the cohomology of $S$ up to the degree $2n+1=4m+3$ is $H^{0}(S)=\langle 1\rangle\,$, $H^{2i+1}(S) =0$, for  $0 \leq i \leq 4m+3$, 
and $H^{4j}(S)=\langle x^j\rangle$, for $1 \leq j \leq m$. 
Thus, the minimal model of $S$ must be a differential graded algebra 
$(\bigwedge V, d)$, being $\bigwedge V$ the free algebra of the form $\bigwedge V=\bigwedge(a_{4}, v_{4m+3})\otimes V^{\geq (4m+4)}$, 
where $|a_4|=4$, $|v_{4m+3}|=4m+3$, $da_{4}=0$ and $dv_{4m+3}=a_{4}^{m+1}$. 
Now, take $\alpha\in I(N^{\leq {4m+3}})$ a closed
element in $\bigwedge V$. Then $\alpha=a_{4}^{p} \,v_{4m+3}$ which is not closed, for any integer number $p\geq 1$.
So, according with Definition \ref{def:primera}, $S$ is $(4m+3)$-formal and, by Theorem
\ref{fm2:criterio2}, $S$ is formal. 

Moreover, if $a\neq 0$ and $b=0$, but $m$ is even, using that $\widetilde{\sigma}_{m+2}=0$ on 
$\widetilde{\mathbb{G}r}_4(\RR^{n+4})$, from \eqref{eq:realsigma} we have
 \begin{align*}
\pm\, x^{m+2} + \text{ elements which are exact in}\,  \eqref{model: S3} =0,
 \end{align*}
that is $x^{m+2}=0$ on $S$. 
Furthermore, by Lemma~\ref{lem:linearfactorsigma}, $\widetilde\sigma_{m+1}=\tau\cdot (al)$ for some non-zero class of 
degree $4m$ on $\mathbb{G}r_4(\RR^{n+4})$. Thus, taking into account that $d(\tau\,u)=\widetilde\sigma_{m+1}$ that is zero 
on $\mathbb{G}r_4(\RR^{n+4})$, $\tau\,u$ is a closed element on $S$.
 The cohomology of $S$ up to the degree $2n+1=4m+3$ is $H^{0}(S)=\langle 1\rangle\,$,
$H^{2i+1}(S) =0$, for  $0 \leq i \leq 2m$, $H^{4j}(S)=\langle x^j\rangle$, for $1 \leq j \leq m$, and $H^{4m+3}(S)=\langle \tau\,u\rangle$. 
Hence the minimal model of $S$ must be a differential graded algebra 
$(\bigwedge V, d)$, being $\bigwedge V$ the free algebra of the form $\bigwedge V=\bigwedge(a_{4}, a_{4m+3})\otimes V^{\geq (4m+4)}$, 
where $|a_4|=4$, $|a_{4m+3}|=4m+3$ and the differential is defined by $da_{4}=0=da_{4m+3}$. So $N^j=0$ for $j\leq (4m+3)$. Thus, according with Definition~\ref{def:primera}, 
the manifold $S$ is $(4m+3)$-formal and, by Theorem~\ref{fm2:criterio2}, $S$ is formal. 

If $a=0$ and $b\not=0$, proceeding as in the previous case we have that $l^{m+1}=0$ or $l^{m+1}\not=0$ but $l^{m+2}=0$ on $S$.
If $l^{m+1}=0$, then the minimal model of $S$ is the one given in the previous case when $x^{m+1}=0$, and so $S$ is formal. If
$l^{m+1}\not=0$ but $l^{m+2}=0$ on $S$, the minimal model of $S$ is the one given in the case $a\neq 0$, $b=0$ and $x^{m+2}=0$.
Thus $S$ is formal. 

Suppose that $a\not=0$ and $b\not=0$. Then,  $l=-{\frac{b}{a}} x$ on $S$ since $a\not=0$ and  $du=al+bx$ by \eqref{model: S3}.
Using that $\widetilde{\sigma}_{m+1}=0=\widetilde{\sigma}_{m+2}$ on 
$\widetilde{\mathbb{G}r}_4(\RR^{n+4})$, from \eqref{eq:realsigma} we have that  $x^{m+1}=0$ or $x^{m+1}\not=0$ but $x^{m+2}=0$ on $S$.
If $x^{m+1}=0$,  the minimal model of $S$ is the one given in the case $a\neq 0$, $b=0$ and $x^{m+1}=0$, and so $S$ is formal. When
 $x^{m+1}\not=0$ but $x^{m+2}=0$, the minimal model of $S$ is the one given in the case $a\neq 0$, $b=0$ and $x^{m+2}=0$.
 Therefore, $S$ is formal.
 
Now consider the $3$-Sasakian homogeneous space 
$S={\mathrm{SO}}(n+4)\,/({\mathrm{SO}}(n)\times {\mathrm{Sp}}(1))$ with $n=2m+1\geq 3$. 
From \eqref{eqn:eulerclassSO(3)}, Theorem \ref{class-homog-3-sasaki} and Proposition \ref{prop:formgrassrODD}, 
$S$ is the $\SO(3)$-bundle
$\SO(3)\rightarrow \, S \,\rightarrow\, \widetilde{\mathbb{G}r}_4(\RR^{n+4})$ with Euler
class $-\frac14(l+2x)$, and so $S$ is formal. Indeed, a minimal model of $S$ is the minimal model previously described for 
$a\neq0$ and $b\neq0$.
\end{proof}

\section{$\SU(2)$ and $\SO(3)$-bundles over the exceptional Wolf spaces} \label{sect:exceptional}

Here we prove that the exceptional $3$-Sasakian homogeneous spaces appearing in  Theorem \ref{class-homog-3-sasaki}
are all formal. They are principal $\SO(3)$-bundles over the exceptional Wolf spaces. We study each of these spaces separately.
 We also show that the total space of $\SU(2)$ and $\SO(3)$-bundles over the exceptional Wolf spaces are formal.

\subsection{The Wolf space $GI$}
First we consider the $8$-dimensional homogeneous quaternionic K\"ahler manifold 
  $$
 {{GI}}\,=\,\frac{{\mathrm{G}}_2}{{\mathrm{SO}}(4)}.
  $$
The rational cohomology ring of ${\mathrm{G}}_2\,/ {\mathrm{SO}}(4)$ is given by \cite{Borel-Hirz, Ishitoya-Toda}
 \begin{equation}\label{cohom:GI}
 H^*(GI) =\mathbb{Q}[x]/(x^3), 
 \end{equation}
where $x$ has degree $4$. 

 \begin{theorem} \label{th: except-11-dim}
The $11$-dimensional $3$-Sasakian homogeneous space $S={\mathrm{G}}_2 / {\mathrm{Sp}}(1)$
is formal.
\end{theorem}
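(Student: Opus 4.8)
The plan is to realize $S={\mathrm{G}}_2/{\mathrm{Sp}}(1)$ as the total space of the natural principal $\SO(3)$-bundle over the $8$-dimensional Wolf space $GI={\mathrm{G}}_2/{\mathrm{SO}}(4)$ (Theorem \ref{class-homog-3-sasaki}), and to build an explicit model of $S$ out of the formal model of $GI$, exactly as in the proofs of Theorems \ref{th: 3-sasa-complexgr-1} and \ref{thm:prop:GrRevenformal}. Since $GI$ is a symmetric space of compact type, it is formal (Theorem \ref{th:formalquat}), so a model of $GI$ is $(H^*(GI),0)=(\mathbb{Q}[x]/(x^3),0)$ with $|x|=4$ by \eqref{cohom:GI}; as $GI$ is simply connected the $\SO(3)$-fibration is nilpotent and rationally an $S^3$-fibration, so by \cite{RS} a model of $S$ is
$$
\big(\mathbb{Q}[x]/(x^3)\otimes\bigwedge(u),\,d\big),\qquad |u|=3,\qquad du=e(S)\in H^4(GI).
$$

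The first step is to pin down the Euler class. By \eqref{eqn:eulerclassSO(3)} and \eqref{eqn:p1}, $e(S)=-\tfrac14 p_1(S)=-\tfrac14[\Omega]$, where $\Omega$ is the quaternionic K\"ahler $4$-form of $GI$. As $H^4(GI)$ is one-dimensional, $[\Omega]=c\,x$ for some $c\in\mathbb{Q}$, and the only thing that matters for the argument is that $c\neq 0$. I would justify this either by recalling that the fundamental $4$-form of a positive quaternionic K\"ahler $4n$-manifold satisfies $[\Omega]^n\neq 0$ (here $n=2$, and $c^2x^2\neq 0$ in $H^8(GI)=\langle x^2\rangle$), or, more elementarily, by the Serre spectral sequence of the rational fibration $S^3\to S\to GI$: if $e(S)$ vanished, the sequence would degenerate at $E_2$, forcing $b_3(S)=b_3(S^3)=1$, which contradicts the vanishing of the odd Betti numbers $b_{2i+1}(S)$ for $0\leq i\leq 2$ of a compact $3$-Sasakian $11$-manifold \cite{GS}. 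Hence $du$ is a non-zero multiple of $x$.

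With $du$ a non-zero multiple of $x$, a direct computation in the model shows that $x$ and $x^2$ are exact, that $xu$ is not closed, and that $x^2u$ is closed but not exact (there is nothing in degree $10$ to bound it). Therefore $H^*(S)\cong\mathbb{Q}$, concentrated in degrees $0$ and $11$; in other words $S$ is a rational homology $11$-sphere. Consequently the minimal model of $S$ is $(\bigwedge(\omega),0)$ with $|\omega|=11$ — the morphism $\omega\mapsto x^2u$ being the required quasi-isomorphism — and any minimal model with zero differential is formal. Equivalently, $S$ is trivially $5$-formal since its rational cohomology vanishes in degrees $1,\dots,10$, so formality also follows from Theorem \ref{fm2:criterio2}.

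There is no genuinely hard step here: the only point requiring a moment's care is the non-vanishing of $e(S)$, i.e.\ of $[\Omega]$, and once that is established formality of $S$ is immediate because $S$ turns out to be rationally a sphere. I expect the write-up will in fact record the precise value of $[\Omega]$ in terms of $x$ (as was done for the Grassmannians in Propositions \ref{prop:formgrassc} and \ref{prop:formgrassr}), but for the conclusion only $c\neq 0$ is needed.
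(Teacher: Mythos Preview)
Your proposal is correct and follows essentially the same route as the paper: build the model $(\mathbb{Q}[x]/(x^3)\otimes\bigwedge(u),d)$ with $du$ a non-zero multiple of $x$, read off that $S$ is a rational homology $11$-sphere, and conclude formality via $5$-formality and Theorem~\ref{fm2:criterio2}. Your extra care in justifying $e(S)\neq 0$ (via $[\Omega]^n\neq 0$ or the Serre spectral sequence) is more than the paper provides---the paper simply asserts the Pontryagin class is a non-zero multiple of $x$---and your identification of the top class as $x^2u$ is the correct one.
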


\begin{proof}
The space $S={\mathrm{G}}_2 / {\mathrm{Sp}}(1)$ is the total space of the $\SO(3)$-bundle $\SO(3)\rightarrow \, S\, \to GI$
with Pontryagin class given by the integral cohomology class of the quaternionic K\"ahler $4$-form $\Omega$ on $GI$. This
must be (a non-zero multiple of) the class $x$ in $H^4(GI)$.
By Theorem \ref{th:1-conn-positive},  $GI={\mathrm{G}}_2\,/ {\mathrm{SO}}(4)$ is simply connected, and
so $\SO(3)\,\to \, S\, \to GI$ is a rational fibration with rational fiber $S^3$. 
Thus, according with section \ref{subsect:fibrations}, if 
$(\mathcal{A}, d_{\mathcal{A}})$ is a model of $GI$, then
$(\mathcal{A} \otimes \bigwedge(u), d)$, with $|u|\,=\,3$, $d\vert_{\mathcal{A}}\,=\,d_{\mathcal{A}}$
and $du\,=\,x$, is a model of $S$.
Furthermore,  $GI={\mathrm{G}}_2\,/ {\mathrm{SO}}(4)$ is formal because it is a symmetric space (see
also Theorem \ref{th:formalquat}). Hence a model of $GI$ is  the DGA
$(H^*(GI), 0)$. Then, a model of $S$ is the DGA
$(H^*(GI)\otimes \bigwedge(u), d)$, where $|u|\,=\,3$ and $du=x$. 

By \eqref{cohom:GI} the unique non-zero de Rham cohomology groups of $GI$ are 
$$
H^0(GI)=\langle 1\rangle\,, \quad H^4(GI)=\langle x \rangle\,,\quad H^8(GI)=\langle x^2\rangle\,.
$$
Therefore, the cohomology of $S$ is
 $$
 H^0(S)= 1,\, \quad  H^i(S) =0\,,  \quad 1\leq i \leq 10, \, \quad   H^{11}(S)= x^3 u.
 $$
Then, the minimal model of $S$ must be a differential graded algebra
$(\bigwedge V,d)$, where $V^j\,=\, 0$, and so $C^j\,=\, 0\,=\,N^j$, for $1\leq j\leq 10$. 
In particular, $N^j=0$ for $j\leq 5$. Thus, according with Definition~\ref{def:primera}, 
the manifold $S$ is $5$-formal and, by Theorem~\ref{fm2:criterio2}, $S$ is formal. 
\end{proof}

\subsection{The Wolf space $FI$}
Now we consider the $28$-dimensional homogeneous quaternionic K\"ahler manifold   
 $$
 {FI}\,=\,\frac{{\mathrm{F}}_4}{{\mathrm{Sp}}(3)\cdot{\mathrm{Sp}}(1)}.
 $$ 
Its rational cohomology is given by \cite{Ishitoya-Toda}
  \begin{equation}\label{cohom:FI} 
  H^*(FI) =\mathbb{Q}[x, y, z]/(x^3-12xy+8z, xz-3y^2, y^3-z^2),
 \end{equation}
where $| x|=4$, $|y|=8$ and $|z|=12$.

 \begin{theorem} \label{th: except-31-dim}
The $31$-dimensional $3$-Sasakian homogeneous space $S={\mathrm{F}}_4/{\mathrm{Sp}}(3)$
is formal.
\end{theorem}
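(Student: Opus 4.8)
The argument runs parallel to the proof of Theorem~\ref{th: except-11-dim}. The space $S={\mathrm{F}}_4/{\mathrm{Sp}}(3)$ is the total space of a principal $\SO(3)$-bundle $\SO(3)\rightarrow S\rightarrow FI$ whose first Pontryagin class $p_1(S)=[\Omega]$ is the integral class of the quaternionic K\"ahler $4$-form on $FI$. Since $H^4(FI)=\langle x\rangle$ is one-dimensional by \eqref{cohom:FI}, the class $[\Omega]$ is a nonzero multiple of $x$, and by \eqref{eqn:eulerclassSO(3)} so is the Euler class $e(S)$. As $FI$ is simply connected (Theorem~\ref{th:1-conn-positive}) and formal, being a symmetric space (see also Theorem~\ref{th:formalquat}), a model of $FI$ is $(H^*(FI),0)$; hence, according to section~\ref{subsect:fibrations}, a model of $S$ is the differential graded algebra $(H^*(FI)\otimes\bigwedge(u),d)$ with $|u|=3$, $d$ vanishing on $H^*(FI)$ and $du=x$ (after rescaling $u$).

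Next I would compute $H^*(S)$ in low degrees from this model. The differential is, up to sign, multiplication by $x$, so in each degree $H^*(S)$ is built from $H^*(FI)/x\,H^*(FI)$ in the $u$-free part and from the annihilator of $x$ in $H^*(FI)$ in the $u$-part. Setting $x=0$ in the presentation \eqref{cohom:FI} forces $z=0$ and $y^2=0$, so $H^*(FI)/(x)\cong \mathbb{Q}[y]/(y^2)$, which lives only in degrees $0$ and $8$. The key point is to check that multiplication by $x$ is injective on $H^j(FI)$ for $0\leq j\leq 12$; this is a finite linear-algebra computation: using $z=-\tfrac18 x^3+\tfrac32 xy$ from the first relation in \eqref{cohom:FI} one expresses $H^j(FI)$ in degrees $4,8,12,16$ in terms of monomials in $x$ and $y$ (using also the relation $x^4=12x^2y-24y^2$ coming from $xz-3y^2$ in degree $16$), and checks that the corresponding multiplication-by-$x$ matrices are nonsingular. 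Together with the fact that $H^*(FI)$ is concentrated in degrees divisible by $4$, this yields $H^0(S)=H^8(S)=\mathbb{Q}$ and $H^k(S)=0$ for $1\leq k\leq 16$, $k\neq 8$.

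It then follows that the minimal model of $S$ in degrees $\leq 15$ is forced: it is $(\bigwedge(a_8,v_{15}),d)$ with $|a_8|=8$, $|v_{15}|=15$, $da_8=0$ and $dv_{15}=a_8^2$, where $a_8$ maps to the generator of $H^8(S)$ and $v_{15}$ kills $a_8^2$ in degree $16$. In the notation of Definition~\ref{def:primera} one has $N^j=0$ for $j\leq 14$ and $N^{15}=\langle v_{15}\rangle$. Since $v_{15}$ has odd degree, every element of the ideal $I_{15}=I(\langle v_{15}\rangle)$ inside $\bigwedge(a_8,v_{15})$ has the form $Q(a_8)\,v_{15}$ with $Q\in\mathbb{Q}[a_8]$, and $d\big(Q(a_8)\,v_{15}\big)=\pm\,Q(a_8)\,a_8^2$ vanishes only for $Q=0$. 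Hence the only closed element of $I_{15}$ is $0$, which is exact, so $S$ is $15$-formal. Since $\dim S=31=2\cdot 16-1$, Theorem~\ref{fm2:criterio2} implies that $S$ is formal.

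The main obstacle is the middle step: controlling $H^*(FI)$ precisely enough in degrees up to $16$ to guarantee $H^k(S)=0$ for $9\leq k\leq 16$, which reduces to verifying the injectivity of multiplication by $x$ on $H^*(FI)$ in low degrees directly from the presentation \eqref{cohom:FI}; once this is in place, the formality argument follows the template already used for the Wolf space $GI$.
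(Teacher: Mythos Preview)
Your proposal is correct and follows essentially the same approach as the paper: build the model $(H^*(FI)\otimes\bigwedge(u),d)$ with $du=x$, determine enough of $H^*(S)$ to see that the minimal model through degree~$15$ is $\bigwedge(a_8,v_{15})$ with $dv_{15}=a_8^{2}$, and then verify $15$-formality so that Theorem~\ref{fm2:criterio2} applies. Your computation is in fact slightly more economical than the paper's---you only compute $H^k(S)$ for $k\leq 16$ via $H^*(FI)/(x)\cong\mathbb{Q}[y]/(y^2)$ together with injectivity of multiplication by $x$, whereas the paper lists all of $H^*(FI)$ and $H^*(S)$---and your check that no nonzero element $Q(a_8)v_{15}\in I(N^{\leq 15})$ is closed is a cleaner way to phrase the $15$-formality step than the paper's degree-by-degree case analysis.
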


\begin{proof}
The space $S={\mathrm{F}}_4/{\mathrm{Sp}}(3)$ is the total space of the $\SO(3)$-bundle $\SO(3)\,\to \, S\, \to FI$ with Pontryagin class 
given by the cohomology class of the quaternionic K\"ahler $4$-form which is (a non-zero multiple of) $x$.
As $FI={\mathrm{F}}_4\,/ ({\mathrm{Sp}}(3)\cdot{\mathrm{Sp}}(1))$ is simply connected and formal,
a model for $S$ is given by the DGA $(H^*(FI) \otimes \bigwedge(u), d)$ with $|u|\,=\,3$ and $du=x \in H^4(FI)$.

By \eqref{cohom:FI} the unique non-zero de Rham cohomology groups of $FI$ are 
\begin{align*}
 &H^0(FI)=\langle 1\rangle,  && H^4(FI)=\langle x \rangle, & &H^8(FI)=\langle x^2, y\rangle, 
 & &H^{12}(FI)=\langle x^3, x y\rangle, \\ &H^{16}(FI)=\langle x^4, y^2\rangle, & & H^{20}(FI)=\langle x^2 y, x y^2\rangle, &
&H^{24}(FI)=\langle x^6, x y^2\rangle, && H^{28}(FI)=\langle x^7\rangle. 
\end{align*}
Therefore, the unique non-zero cohomology groups of $S$ are
 $$
 H^0(S)= 1,\, \quad  H^8(S) =\langle y\rangle\,, \quad  H^{23}(S) =\langle PD(y)\rangle\,,\quad  H^{31}(S) =\langle x^7 u\rangle\,,
 $$
where $PD(y)$ is the Poincar\'e dual of $y$. Then the minimal model of $S$ must be a differential graded algebra
$(\bigwedge V,d)$, being $\bigwedge V$ the free algebra of the form
$\bigwedge V=\bigwedge(a_{8},v_{15})\otimes \bigwedge V^{\geq 16}$, where $|a_{8}|=8$, $|v_{15}|=15$, $da_{8}=0$ and
$dv_{15}=a_{8}^2$. According with Definition~\ref{def:primera}, $S$ is $14$-formal because 
$N^j=0$, for $j\leq 14$. Moreover, $S$ is 
$15$-formal. In fact, take $\alpha\in I(N^{\leq {15}})$ a closed
element in $\bigwedge V$. As $H^*(\bigwedge V)=H^*(S)$ has only non-zero cohomology in degrees $0, 8, 23$ and $31$, 
it must be $|\alpha|= 23, 31$. If $|\alpha|=23$ then $\alpha=a_{8} \,v_{15}$ which is not closed, and if
$|\alpha|= 31$ then $\alpha=a_{8}^2\,v_{15}$ which is not closed either.
So, according with Definition \ref{def:primera}, $S$ is $15$-formal and, by Theorem
\ref{fm2:criterio2}, $S$ is formal.
\end{proof}

\subsection{The Wolf space $EII$}
For the $40$-dimensional homogeneous quaternionic K\"ahler manifold 
 $$
 EII \,=\,\frac{{\mathrm{E}}_6}{{\mathrm{SU}}(6)\cdot{\mathrm{Sp}}(1)}\,,
 $$ 
we know that its rational cohomology is \cite{Ishitoya}
 \begin{equation}\label{cohom:EII}
  H^*(EII) =\mathbb{Q}[x, y, z, t]/(R_{12},R_{16},R_{18},R_{24}),
  \end{equation}
where $|x|=4$, $|y|=6$, $|z|=8$, $|t|=12$, and
\begin{align} \label{eqn:RR}
 & R_{12}=y^2-8t-6zx+x^3, & &
 R_{16}=x^4+12xt-6x^2z-3z^2,\\
 &R_{18}=yt, && R_{24}=t^2+z^3-\tfrac32xzt. \nonumber
\end{align}

\begin{theorem} \label{th: except-43-dim}
The $3$-Sasakian homogeneous space $S={\mathrm{E}}_6/{\mathrm{SU}}(6)$ of dimension $43$
is formal.
\end{theorem}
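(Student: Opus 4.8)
The plan is to mimic the argument used for the Wolf spaces $GI$ and $FI$. Since $EII={\mathrm{E}}_6/({\mathrm{SU}}(6)\cdot{\mathrm{Sp}}(1))$ is simply connected (Theorem~\ref{th:1-conn-positive}) and formal (Theorem~\ref{th:formalquat}), and $S={\mathrm{E}}_6/{\mathrm{SU}}(6)$ is the total space of a principal $\SO(3)$-bundle over $EII$ whose first Pontryagin class is the quaternionic K\"ahler class $[\Omega]$, which by \eqref{cohom:EII} is a nonzero multiple of $x\in H^4(EII)$, a model of $S$ is the DGA $(H^*(EII)\otimes\bigwedge(u),d)$ with $|u|=3$, $d|_{H^*(EII)}=0$ and $du=x$ (after rescaling $u$).

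First I would compute $H^*(S)$ from this model, using $H^*(S)=H^*(EII)/(x)\ \oplus\ u\cdot\mathrm{Ann}_{H^*(EII)}(x)$, the second summand being shifted up in degree by $3$. Setting $x=0$ in the relations \eqref{eqn:RR} yields $t=y^2/8$, $z^2=0$ and $y^3=0$ (and $R_{24}$ becomes a consequence of these), so $H^*(EII)/(x)\cong\mathbb{Q}[y,z]/(y^3,z^2)$ with basis $1,y,z,y^2,yz,y^2z$ in degrees $0,6,8,12,14,20$. The ideal $(x)\subset H^*(EII)$ has codimension $6$, so $\mathrm{Ann}(x)$ has dimension $6$; by Poincar\'e duality in $H^*(EII)$ (of formal dimension $40$) it is dual, with a shift, to $H^*(EII)/(x)$, hence concentrated in degrees $20,26,28,32,34,40$, each of dimension $1$. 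Therefore $H^*(S)$ has Betti numbers equal to $1$ in degrees $0,6,8,12,14,20,23,29,31,35,37,43$ and $0$ otherwise (consistent with Poincar\'e duality in dimension $43$).

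Next I would determine the minimal model of $S$ through degree $21$. Up to degree $14$ it is $\bigwedge(a_6,a_8)$ with $da_6=da_8=0$ and $[a_6]=y$, $[a_8]=z$. The elements $a_8^2$ (degree $16$) and $a_6^3$ (degree $18$) are nonzero but cohomologically trivial, since $z^2=0=y^3$ in $H^*(S)$, which forces new generators $b_{15}$ with $db_{15}=a_8^2$ and $b_{17}$ with $db_{17}=a_6^3$; a degree count shows there are no admissible products, so no further generator of degree $\le 21$ is needed (indeed $H^*(\bigwedge(a_6,a_8,b_{15},b_{17}))\cong\mathbb{Q}[a_6,a_8]/(a_6^3,a_8^2)$, which already matches $H^{\le 22}(S)$). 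Thus $\bigwedge V^{\le 21}=\bigwedge(a_6,a_8,b_{15},b_{17})$ and, in the notation of Definition~\ref{def:primera}, $\bigoplus_{i\le 21}N^i=\langle b_{15},b_{17}\rangle$. To establish $21$-formality it remains to check that every closed element of the ideal $I_{21}=(b_{15},b_{17})$ is exact in $\bigwedge V$. Such an element can be written $Pb_{15}+Qb_{17}+R\,b_{15}b_{17}$ with $P,Q,R\in\mathbb{Q}[a_6,a_8]$; applying $d$ and collecting by the number of odd factors forces $R=0$ and $Pa_8^2+Qa_6^3=0$, and since $a_6,a_8$ generate a polynomial ring in which $a_6^3$ and $a_8^2$ are coprime, $P=a_6^3P'$, $Q=-a_8^2P'$ for some $P'\in\mathbb{Q}[a_6,a_8]$. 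Then $Pb_{15}+Qb_{17}=P'(a_6^3b_{15}-a_8^2b_{17})=-d(P'b_{15}b_{17})$ is exact. Hence $S$ is $21$-formal, and since $\dim S=43=2\cdot 22-1$, Theorem~\ref{fm2:criterio2} gives that $S$ is formal.

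The routine but indispensable part is the cohomology computation, in particular pinning down $\mathrm{Ann}_{H^*(EII)}(x)$, because it is precisely the sparseness of $H^*(S)$ that guarantees no $N$-generators appear beyond degrees $15$ and $17$. The genuinely delicate point is the final step: showing that the closed elements of the ideal generated by $b_{15}$ and $b_{17}$ are exact. It works because $a_6^3$ and $a_8^2$ are coprime in $\mathbb{Q}[a_6,a_8]$, so the only closed combinations are multiples of $a_6^3b_{15}-a_8^2b_{17}=-d(b_{15}b_{17})$; were these two Koszul-type relations to share a common factor, closed non-exact classes could survive, and one would have to argue more carefully --- or the space could fail to be formal, as happens for some of the Grassmannian bundles treated above.
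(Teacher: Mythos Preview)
Your proof is correct and follows essentially the same approach as the paper: build the model $(H^*(EII)\otimes\bigwedge(u),d)$ with $du=x$, compute $H^{\le 21}(S)$, identify the minimal model through degree $21$ as $\bigwedge(a_6,a_8,b_{15},b_{17})$ with $db_{15}=a_8^2$ and $db_{17}=a_6^3$, and then verify $21$-formality by showing every closed element of the ideal $(b_{15},b_{17})$ is exact. Your exactness argument via coprimality of $a_6^3$ and $a_8^2$ in $\mathbb{Q}[a_6,a_8]$ is slightly cleaner than the paper's degree-by-degree check, and your quotient/annihilator computation of $H^*(S)$ is a nice systematic alternative to the paper's direct enumeration, but these are stylistic differences rather than a different route.
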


\begin{proof}
The space $S={\mathrm{E}}_6/{\mathrm{SU}}(6)$ is the total space of the $\SO(3)$-bundle $\SO(3)\,\to \, S\, \to EII$
 with Pontryagin class given by the quaternionic $4$-form which is (a non-zero multiple of) $x$.
 Since $EII={\mathrm{E}}_6\,/ ({\mathrm{SU}}(6)\cdot{\mathrm{Sp}}(1))$ is simply connected and 
formal, a model for $S$ is the DGA 
$(H^*(EII)\otimes \bigwedge(u), d)$, where $|u|\,=\,3$ and $du=x \in H^4(EII)$.

By \eqref{cohom:EII} and the relations given in (\ref{eqn:RR}), the $20$ first de Rham cohomology 
groups of $EII$ are 
 \begin{align*}
&H^0(EII)=\langle 1\rangle ,     
&&H^4(EII)=\langle x \rangle ,
&&H^6(EII)=\langle y\rangle ,   \\
&H^8(EII)=\langle x^2, z\rangle ,      
&&H^{10}(EII)=\langle x y\rangle ,         
&&H^{12}(EII)=\langle x^3, x z, y^2\rangle , \\
&H^{14}(EII)=\langle x^2 y, y z\rangle ,       
&&H^{16}(EII)=\langle x^4, x^2 z, x y^2\rangle ,  
&&H^{18}(EII)=\langle x^3 y, xyz\rangle ,     \\
&H^{20}(EII)=\langle x^5, x^3 z, x^2 y^2, y^2 z\rangle , \hspace{-5mm}
\end{align*}
and $H^{2i+1}(EII)\,=\,0$, for $0\leq i \leq 9$.
Therefore, the $21$ first de Rham cohomology groups of $S$ are
  \begin{align*}
  & H^0(S)= 1, && H^6(S) =\langle y\rangle, &&   H^8(S) =\langle z \rangle, \\
& H^{12}(S) =\langle y^2 \rangle, &&   H^{14}(S) =\langle y z\rangle,  && H^{20}(S) =\langle y^2 z \rangle,
\end{align*}
and $H^{2i+1}(S) = 0$, for $0 \leq i \leq 10$. 

Then the minimal model of $S$ must be a differential graded algebra
$(\bigwedge V,d)$, being $\bigwedge V=\bigwedge(a_{6}, a_{8},v_{15}, v_{17})\otimes \bigwedge V^{\geq 22}$, where $|a_{6}|=6$, $|a_{8}|=8$, 
$|v_{15}|=15$, $|v_{17}|=17$,
$d a_{6}=0=d a_{8}$, $dv_{15}=a_{8}^2$ and $dv_{17}=a_{6}^3$. 
According with Definition~\ref{def:primera}, we have 
$N^j=0$, for $j\leq 14$, thus the manifold $S$ is $14$-formal. Let us see that it is $21$-formal. 
For this, it is sufficient to prove that $S$ is $17$-formal because $N^j=0$, for $18 \leq j\leq 21$.
Take $\alpha\in I(N^{\leq {17}})$ a closed
element in $\bigwedge V$. As $H^*(\bigwedge V)=H^*(S)$ has only non-zero cohomology 
in degrees $0, 6, 8, 12, 14, 20, 23, 29, 31, 35, 37$ and $43$, 
it must be $|\alpha|=  23, 29, 31, 35, 37, 43$, that is 
$\alpha$ has odd degree. 
 If $|\alpha|=29$, then $\alpha=a_{6}\, a_{8}\,v_{15}$ which is not closed, and if 
$|\alpha|=31$, then $\alpha=a_{6}\, a_{8}\,v_{17}$ which is not closed either. In the other cases, $\alpha$ is of the form 
$\alpha=P_1\,v_{15}+P_2\,v_{17}$, where $P_1,P_2 \in \bigwedge(a_{6}, a_{8})$. Now, 
$d\alpha=0$ implies that $P_1=P \, a_{6}^3$ and $P_2=-P \, a_{8}^2$, for some $P\in \bigwedge (a_{6}, a_{8})$. So 
$\alpha=d(P\, v_{15}\,  v_{17})$ is exact. Thus any closed element in
the ideal $I(N^{\leq 21})$ is exact and hence $S$ is $21$-formal.
By Theorem \ref{fm2:criterio2}, the manifold $S$ is formal.
\end{proof}

\subsection{The Wolf space $EVI$}
Now we consider the $64$-dimensional homogeneous quaternionic K\"ahler manifold 
 $$
 {EVI}\,=\,\frac{E_7}{\mathrm{Spin}(12)\cdot \mathrm{Spin}(1)}.
 $$
Its rational cohomology ring is ~\cite{Nakagawa}
 \begin{equation}\label{cohom:EVI}
 H^*(EVI)=\mathbb{Q}[x,y,z]/(R_1,R_2,R_3),
 \end{equation}
where $|x|=4$, $|y|=8$, $|z|=12$, and 
\begin{align} \label{eqn:RRR}
R_1&=-2x^4y+2x^3z-3xyz+\tfrac18y^3+3z^2, \nonumber\\
R_2&=x^7-x^5y+4x^4z-\tfrac32x^3y^2-\tfrac38xy^3+3xz^2-\tfrac34y^2z,\\
R_3&=4x^7y+3x^5y^2+8x^4yz+x^3y^3+4x^3z^2+6x^2y^2z+\tfrac3{16}xy^4+\tfrac38y^3z+8z^3.\nonumber
\end{align}

\begin{theorem}\label{th: except-67-dim}
The $3$-Sasakian homogeneous space $S={\mathrm{E}}_7/{\mathrm{Spin}}(12)$ of
dimension $67$ is formal.
\end{theorem}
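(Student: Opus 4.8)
The plan is to follow the template used for the Wolf spaces $GI$, $FI$ and $EII$: realise $S$ as a rational $S^3$-fibration over the Wolf space $EVI$, write down the resulting model, compute enough of $H^*(S)$ to pin down the minimal model in low degrees, and then check $s$-formality for $s$ big enough to invoke Theorem~\ref{fm2:criterio2}. The only new feature is that the ring $H^*(EVI)/(x)$ is slightly richer than in the earlier cases, so the minimal model of $S$ will carry two extra odd generators of relatively low degree.

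First I would recall that, by Theorem~\ref{class-homog-3-sasaki}, $S=E_7/\mathrm{Spin}(12)$ is the total space of a principal $\SO(3)$-bundle $\SO(3)\to S\to EVI$ whose first Pontryagin class is a nonzero multiple of the quaternionic K\"ahler $4$-form, that is, of the degree-$4$ generator $x\in H^4(EVI)$; note also that $S$ is simply connected. Since $EVI=E_7/(\mathrm{Spin}(12)\cdot\mathrm{Sp}(1))$ is simply connected by Theorem~\ref{th:1-conn-positive} and formal by Theorem~\ref{th:formalquat}, the fibration is a rational $S^3$-fibration, and by Section~\ref{subsect:fibrations} a model of $S$ is $\big(H^*(EVI)\otimes\bigwedge(u),\,d\big)$ with $|u|=3$, $d|_{H^*(EVI)}=0$ and $du=x$ (after rescaling $u$). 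In this model a cocycle is $a+bu$ with $bx=0$ and a coboundary is $x\,c$, so $H^*(S)\cong\big(H^*(EVI)/(x)\big)\oplus\big(\mathrm{Ann}_{H^*(EVI)}(x)\big)\,u$, the second summand shifted up in degree by $3$.

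Next I would work out $H^*(EVI)/(x)$: putting $x=0$ in the relations~\eqref{eqn:RRR} yields $H^*(EVI)/(x)\cong\QQ[y,z]/(y^3+24z^2,\,y^2z,\,z^3)$, which has $\QQ$-basis $1,y,z,y^2,yz,z^2,yz^2$ in degrees $0,8,12,16,20,24,32$ (in particular $y^2z=0$, $z^3=0$, $z^2\sim y^3$, $yz^2\sim y^4$). A direct computation --- using that none of $R_1,R_2,R_3$ is divisible by $x$, or Poincar\'e duality on $EVI$ --- shows that multiplication by $x$ is injective on $H^{\le 28}(EVI)$, so $\mathrm{Ann}_{H^*(EVI)}(x)$ is concentrated in degrees $\ge 32$ and the $u$-part of $H^*(S)$ lies in degrees $\ge 35$. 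Since $\dim S=67=2\cdot34-1$, by Theorem~\ref{fm2:criterio2} it is enough to show $S$ is $33$-formal, and the computation above says that below degree $34$ the only nonzero cohomology of $S$ sits in degrees $0,8,12,16,20,24,32$. Building the minimal model degree by degree, one reads off that through degree $33$ the generators are $a_8,a_{12}$ with $da_8=da_{12}=0$ (these lie in $C$ and are dual to $y,z$), together with $w_{23},v_{27}$ in $N$, where $dw_{23}=a_8^3+24a_{12}^2$ (which kills the relation $y^3\sim z^2$ in degree $24$) and $dv_{27}=a_8^2a_{12}$ (which kills $y^2z=0$ in degree $28$); no further generators appear in degrees $\le 33$, e.g.\ $24a_{12}^3=a_{12}\,dw_{23}-a_8\,dv_{27}$ is exact, so $a_{12}^3=0$ in cohomology costs nothing. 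Thus $\bigwedge V^{\le 33}=\bigwedge(a_8,a_{12},w_{23},v_{27})$ and the full minimal model is $\big(\bigwedge(a_8,a_{12},w_{23},v_{27})\otimes\bigwedge V^{\ge 35},\,d\big)$.

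To verify $33$-formality I would take a closed homogeneous $\alpha\in I(N^{\le 33})=I(w_{23},v_{27})$ and write $\alpha=P_1w_{23}+P_2v_{27}+P_3w_{23}v_{27}$ with $P_1,P_2,P_3\in\QQ[a_8,a_{12}]$. If $|\alpha|$ is even, then $P_1=P_2=0$ for parity reasons and $d\alpha=0$ forces $P_3=0$ since $\QQ[a_8,a_{12}]$ is a domain, so $\alpha=0$. If $|\alpha|$ is odd, then $P_3=0$ by parity and $d\alpha=P_1(a_8^3+24a_{12}^2)+P_2a_8^2a_{12}=0$ in $\QQ[a_8,a_{12}]$; since $a_8^2a_{12}$ is coprime to $a_8^3+24a_{12}^2$, this gives $P_1=a_8^2a_{12}\,Q$ and $P_2=-Q(a_8^3+24a_{12}^2)$ for some $Q$, whence $\alpha=-\,d\big(Q\,w_{23}v_{27}\big)$ is exact. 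By Definition~\ref{def:primera} this proves $S$ is $33$-formal, and Theorem~\ref{fm2:criterio2} then yields formality. The main obstacle is the cohomological bookkeeping in the middle step: extracting the precise structure of $H^*(EVI)/(x)$ and of $\mathrm{Ann}(x)$ from the three explicit relations~\eqref{eqn:RRR}, and --- most delicately --- confirming that no hidden low-degree generators (equivalently, no nontrivial Massey products in degrees $\le 33$) enter the minimal model beyond $w_{23}$ and $v_{27}$. Once that is in place, the ideal computation is the same routine argument as in the treatments of $GI$, $FI$ and $EII$.
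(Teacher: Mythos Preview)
Your proposal is correct and follows essentially the same approach as the paper: realise $S$ as a rational $S^3$-fibration over $EVI$, use the model $(H^*(EVI)\otimes\bigwedge(u),d)$ with $du=x$, compute $H^*(S)$ through degree $33$, read off the minimal model $\bigwedge(a_8,a_{12},v_{23},v_{27})\otimes\bigwedge V^{\ge 34}$ with $dv_{23}=a_8^3+24a_{12}^2$, $dv_{27}=a_8^2a_{12}$, and verify $33$-formality via the ideal argument. Your organisation of the cohomology computation through $H^*(EVI)/(x)$ and $\mathrm{Ann}_{H^*(EVI)}(x)$, and your use of the coprimality of $a_8^2a_{12}$ and $a_8^3+24a_{12}^2$ in $\QQ[a_8,a_{12}]$, are minor streamlinings of what the paper does by direct inspection, but the argument is the same.
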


\begin{proof}
The space $S={\mathrm{E}}_7/{\mathrm{Spin}}(12)$ is the total space of the $\SO(3)$-bundle $\SO(3)\rightarrow S\rightarrow EVI$ 
with Pontryagin class given by the the quaternionic $4$-form which is (a non-zero multiple of) $x$. 
Since $EVI=E_7/(\mathrm{Spin}(12)\cdot \mathrm{Spin}(1))$ is simply connected and formal, a model of $S$ is given by 
$(H^\ast(EVI)\otimes\bigwedge(u),d)$, where 
$|u|=3$ and $du=x\in H^4(EVI)$. 

By \eqref{cohom:EVI} and the relations (\ref{eqn:RRR}), 
the unique non-zero de Rham cohomology
groups of $EVI$ up to degree~$33$ are
\begin{align*}
 & H^0(EVI)=\langle1\rangle, 
 && H^4(EVI)=\langle x\rangle,\\
 & H^{8}(EVI)=\langle x^2,y\rangle,
&& H^{12}(EVI)=\langle x^3,xy,z\rangle,\\
 &H^{16}(EVI)=\langle x^4,x^2y,y^2,xz\rangle, 
 &&H^{20}(EVI)=\langle x^5,x^3y,xy^2,x^2z,yz\rangle,\\
 &H^{24}(EVI)=\langle x^6,x^4y,x^2y^2,y^3,x^3z,xyz\rangle, 
 &&H^{28}(EVI)=\langle x^7,x^5y,x^3y^2,xy^3,x^4z,x^2yz\rangle,\\
 &H^{32}(EVI)=\langle x^8,x^6y,x^4y^2,x^2y^3,y^4,x^5z,x^3yz\rangle. \hspace{-5mm}
\end{align*}
Therefore, the $32$-first non-zero de Rham cohomology groups of $S$ are
\begin{align*}
 &H^0(S)=\langle1\rangle,&& H^8(S)=\langle y\rangle,&&H^{12}(S)=\langle z\rangle, 
 && H^{16}(S)=\langle y^2\rangle, \\
 &H^{20}(S)=\langle yz\rangle, && H^{24}(S)=\langle y^3\rangle,
 &&H^{32}(S)=\langle y^4\rangle,
\end{align*}
and $H^{2i+1}(S)=0$ for $0\leq i\leq 16$. The minimal model of $S$ must be the differential graded algebra 
$(\bigwedge V,d)$ where $\bigwedge V=\bigwedge(a_{8}, a_{12}, v_{23}, v_{27})\otimes \bigwedge V^{\geq 34}$, 
where $|a_{8}|=8$, $|a_{12}|=12$, $|v_{23}|=23$ and $|v_{27}|=27$, and the differential $d$ is given by $da_{i}=0$ $(i=8, 12)$, $dv_{23}=a_{8}^3+24a_{12}^2$ and
$dv_{27}=a_{8}^2\,a_{12}$. 
Thus, the manifold $S$ is $22$-formal because the space $N^j=0$, for $j\leq 22$. 
Moreover, $S$ is $33$-formal. 
Take  $\alpha\in I(N^{\leq 33})$.
Since $H^*(\bigwedge V)=H^*(S)$ has only non-zero cohomology in degrees $0, 8, 12, 16, 20, 24, 32, 35, 43, 47, 51, 55, 59$ and $67$, 
the degree of $\alpha$ must be $|\alpha|=  35, 43, 47, 51, 55, 59, 67$,  and so $\alpha$ has not component in $v_{23}\, 
v_{27}$. Therefore, $\alpha$
is of the form $\alpha=P_1 \,v_{23}+P_2\, v_{27}$,
where $P_1$ and $P_2$ live in the subalgebra $\bigwedge (a_{8}, a_{12})$ of 
$\bigwedge V^{\leq 33}$. The equality
$d\alpha=0$ implies $P_1=P\, dv_{27}$ and $P_2=-P\, dv_{23}$,
 for some $P\in \bigwedge (a_{8}, a_{12})$. Hence, $\alpha=d(P\, v_{23}\,  v_{27})$ which proves that
any closed element in the ideal $I(N^{\leq 33})$ is exact. By
Definition~\ref{def:primera}, $S$ is $33$-formal and, 
by Theorem~\ref{fm2:criterio2}, $S$ is formal.
\end{proof}

\subsection{The Wolf space $EIX$}
We consider the homogeneous quaternionic K\"ahler manifold 
 $$
 {EIX}\,=\,\frac{{\mathrm{E}}_8}{{\mathrm{E}}_7\cdot{\mathrm{Sp}}(1)}.
 $$
By \cite{Salamon2} (see also \cite{Piccinni}) its rational cohomology ring is
  \begin{equation}\label{cohom:EIX}
  H^*(EIX)=\mathbb{Q}[x_4, x_{12}, x_{20}]/(x_{12}^4, x_{20}^2),
  \end{equation}
where $|x_i|=i$, with $i= 4, 12, 20$. 

\begin{theorem}\label{th: except-67-dim}
The $3$-Sasakian homogeneous space $S={\mathrm{E}}_8\,/ {\mathrm{E}}_7$ of dimension 115 is formal.
\end{theorem}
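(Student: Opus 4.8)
The plan is to run the same argument used for the other exceptional Wolf spaces in this section. First I would record that $S={\mathrm{E}}_8/{\mathrm{E}}_7$ is the total space of the principal $\SO(3)$-bundle $\SO(3)\to S\to EIX$ whose first Pontryagin class is the integral class of the quaternionic K\"ahler $4$-form $\Omega$ on $EIX$; since $H^4(EIX)=\langle x_4\rangle$ by \eqref{cohom:EIX} and $[\Omega]$ is a non-zero multiple of $x_4$, and $e(S)=-\frac14 p_1(S)$ by \eqref{eqn:eulerclassSO(3)}, the Euler class of the bundle is a non-zero multiple of $x_4$. By Theorem~\ref{th:1-conn-positive} the space $EIX$ is simply connected, and it is formal because it is a symmetric space of compact type (see also Theorem~\ref{th:formalquat}); hence $\SO(3)\to S\to EIX$ is a rational $S^3$-fibration, and by section~\ref{subsect:fibrations} a model of $S$ is the DGA $(H^*(EIX)\otimes\bigwedge(u),d)$ with $|u|=3$, $d|_{H^*(EIX)}=0$ and $du=x_4$.

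Next I would compute the cohomology of $S$ in low degrees from this model. Since $du=x_4$, passing to $S$ kills $x_4$, so below the degree of the remaining defining relations of $H^*(EIX)$ (these necessarily involve $x_4$, as $H^*(EIX)$ is a finite-dimensional Poincar\'e duality algebra, and they have large degree) one has $H^*(S)\cong H^*(EIX)/(x_4)\cong \mathbb{Q}[x_{12},x_{20}]/(x_{12}^4,x_{20}^2)$ in that range, the annihilator of $x_4$ in $H^*(EIX)$ contributing only classes of degree $\geq 58$ by Poincar\'e duality on $EIX$. Hence, for $j\leq 57$, $H^j(S)=0$ when $j$ is odd, and the non-zero even groups $H^j(S)$ are one-dimensional, occurring in degrees $0,12,20,24,32,36,44,56$ and spanned respectively by $1,x_{12},x_{20},x_{12}^2,x_{12}x_{20},x_{12}^3,x_{12}^2x_{20},x_{12}^3x_{20}$. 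Building the minimal model degree by degree, it must have the form $(\bigwedge V,d)$ with $\bigwedge V=\bigwedge(a_{12},a_{20},v_{39},v_{47})\otimes\bigwedge V^{\geq 58}$, where $|a_{12}|=12$, $|a_{20}|=20$, $|v_{39}|=39$, $|v_{47}|=47$, $da_{12}=da_{20}=0$, $dv_{39}=a_{20}^2$ and $dv_{47}=a_{12}^4$; indeed $\bigwedge(a_{12},a_{20},v_{39},v_{47})$ is the tensor product of the Koszul-type complexes $(\bigwedge(a_{20},v_{39}),dv_{39}=a_{20}^2)$ and $(\bigwedge(a_{12},v_{47}),dv_{47}=a_{12}^4)$, and its cohomology equals $\langle a_{12}^ia_{20}^k:0\leq i\leq 3,\ 0\leq k\leq 1\rangle$, matching $H^{\leq 57}(S)$.

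Then I would show $S$ is $57$-formal. In the notation of Definition~\ref{def:primera}, for $i\leq 57$ we have $C^{12}=\langle a_{12}\rangle$, $C^{20}=\langle a_{20}\rangle$, $N^{39}=\langle v_{39}\rangle$, $N^{47}=\langle v_{47}\rangle$, and all other $C^i=N^i=0$; thus $I_{57}$ is the ideal of $\bigwedge(a_{12},a_{20},v_{39},v_{47})$ generated by $v_{39}$ and $v_{47}$. A homogeneous closed element $\alpha\in I_{57}$ has the form $\alpha=P_1v_{39}+P_2v_{47}+P_3\,v_{39}v_{47}$ with $P_1,P_2,P_3\in\bigwedge(a_{12},a_{20})$. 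If $\alpha$ has even degree, then $P_1=P_2=0$ and $d\alpha=0$ forces $P_3=0$, so $\alpha=0$ is exact. If $\alpha$ has odd degree, then $\alpha=P_1v_{39}+P_2v_{47}$ and $d\alpha=P_1a_{20}^2+P_2a_{12}^4=0$; since $a_{20}^2$ and $a_{12}^4$ are coprime in the polynomial algebra $\bigwedge(a_{12},a_{20})$, there is $Q\in\bigwedge(a_{12},a_{20})$ with $P_1=a_{12}^4Q$ and $P_2=-a_{20}^2Q$, whence $\alpha=d(-Q\,v_{39}v_{47})$ is exact. Therefore every closed element of $I_{57}$ is exact and $S$ is $57$-formal. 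Since $\dim S=115=2\cdot 58-1$, Theorem~\ref{fm2:criterio2} gives that $S$ is formal.

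The step I expect to be the main obstacle is the cohomology bookkeeping in the second paragraph: one must be sure that below degree $58$ the only defining relations of $H^*(EIX)$ that enter are $x_{12}^4$ and $x_{20}^2$, so that $H^{\leq 57}(S)$ is exactly as claimed, and that neither the remaining relations of $H^*(EIX)$ nor the annihilator of $x_4$ produces a minimal-model generator of $S$ in degree $\leq 57$. Once this is verified from the presentation \eqref{cohom:EIX} of $H^*(EIX)$ together with Poincar\'e duality on $EIX$, the remaining $s$-formality step is the same routine computation as in the previous exceptional cases.
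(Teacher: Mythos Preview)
Your proposal is correct and follows essentially the same approach as the paper: both build the model $(H^*(EIX)\otimes\bigwedge(u),d)$ with $du=x_4$, read off the same low-degree cohomology and the same partial minimal model $\bigwedge(a_{12},a_{20},v_{39},v_{47})$ with $dv_{39}=a_{20}^2$, $dv_{47}=a_{12}^4$, and then establish $57$-formality to invoke Theorem~\ref{fm2:criterio2}. The only notable difference is that you handle the $s$-formality step uniformly via the coprimality of $a_{20}^2$ and $a_{12}^4$ in $\bigwedge(a_{12},a_{20})$, whereas the paper checks degree by degree that no nonzero closed element occurs in $I(N^{\leq 47})$; your argument is a bit cleaner but not a different route.
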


\begin{proof}
The space $S={\mathrm{E}}_8\,/ {\mathrm{E}}_7$ is the total space of the $\SO(3)$-bundle $\SO(3)\,\to \, S\, \to EIX$
with Pontryagin class given by the cohomology class of the quaternionic $4$-form, which is
(a non-zero multiple of) $x_4$. Since 
$EIX={\mathrm{E}}_8 / ({\mathrm{E}}_7\cdot{\mathrm{Sp}}(1))$ is simply connected 
and formal, a model of $S$ 
is the differential algebra $(H^\ast(EIX)\otimes\bigwedge(u),d)$ with $|u|=3$ and $du=x_{4}$. 

Using the cohomology algebra in (\ref{cohom:EIX}) (which this time we do not write 
explicitly because it is very long, and easy to do for the reader) and the model for $S$, we 
get easily that the non-zero de Rham cohomology groups of $S$ up to degree $58$ are 
 \begin{align*}
& H^0(S)=\langle 1\rangle,  &&H^{12}(S)=\langle x_{12}  \rangle, && H^{20}(S)=\langle  x_{20} \rangle\, && H^{24}(S)=\langle x_{12}^2 \rangle,\\
&H^{32}(S)=\langle x_{12} x_{20} \rangle, && H^{36}(S)=\langle x_{12}^3 \rangle, && H^{44}(S)=\langle x_{12}^2 x_{20}\rangle\, && H^{56}(S)=\langle x_{12}^3 x_{20}\rangle.
\end{align*}
By Poincar\'e duality, there exist elements $x_{i} \in H^{i}(S)$, $ i = 59, 71, 79, 83, 91, 95,103, 115$, 
such that (see  \cite{Nakagawa2})
$$
x_{12}^3 x_{20}x_{59} = x_{12}^2 x_{20}x_{71} =  x_{12}^3 x_{79} = x_{12} x_{20}x_{83} = x_{12}^2 x_{91} = x_{20}x_{95} = x_{12}x_{103}= x_{115},
$$
and 
 \begin{align*}
 & x_{71} = x_{12} x_{59}\,, && x_{79} = x_{20}x_{59}\,, &&  x_{83} = x_{12}^2 x_{59}\,,\\
 & x_{91} = x_{12} x_{20} x_{59}\,, &&  x_{103} = x_{12}^2 x_{20} x_{59}\,, && x_{115} =x_{12}^3 x_{20} x_{59}.
\end{align*}

Then the minimal model of $S$ must be a differential graded algebra
$(\bigwedge V,d)$, with 
$\bigwedge V=\bigwedge(a_{12}, a_{20}, v_{39}, v_{47})\otimes \bigwedge V^{\geq 59}$, where $|a_i|=i$ for $i=12,20$, 
$|v_j|=j$ for $j=39,47$, and $d$ is given
by $d a_{12}=0=d a_{20}$, $dv_{39}=a_{20}^2$ and $dv_{47}=a_{12}^4$. 
According with Definition~\ref{def:primera}, the manifold $S$ is $38$-formal because $N^j=0$, for $j\leq 38$.
To prove that $S$ is $57$-formal it is sufficient to prove that $S$ is $47$-formal since $N^j=0$, for $48 \leq j\leq 57$.
Let $\alpha\in I(N^{\leq {47}})$ be a closed
element in $\bigwedge V$. Since $H^*(\bigwedge V)=H^*(S)$ has only non-zero cohomology in degrees 
$0, 12, 20, 24, 32, 36, 44, 56, 59, 71, 79, 83, 91, 95, 103$ and $115$, 
it must be $|\alpha|=  59, 71, 79, 83, 91, 95, 103, 115$. (Note that $|\alpha|\not=44, 56$ because $|v_{39}|=39$, $|v_{47}|=47$ and $b_{2i+1}(S)=0$, for $0\leq i \leq 28$.)
For each of these cases, $\alpha$ is given as follows:
$\alpha=\lambda_1\, a_{20}\,v_{39}+\mu_1\, a_{12}\,v_{47}$ if $|\alpha|=59$;
$\alpha=\lambda_2\, a_{12}\,a_{20}\,v_{39}+\mu_2\, a_{12}^2\,v_{47}$ if $|\alpha|=71$;
$\alpha=\lambda_3\, a_{20}^2\,v_{39}+\mu_3\, a_{12}\, a_{20}\,v_{47}$ if $|\alpha|=79$; 
$\alpha=\lambda_4\, a_{12}^2\, a_{20}\,v_{39}+\mu_4\, a_{12}^3\,v_{47}$ if $|\alpha|=83$;
$\alpha=\lambda_5\, a_{12}\,a_{20}^2\,v_{39}+\mu_5\, a_{12}^2\,a_{20}\,v_{47}$ if $|\alpha|=91$; 
$\alpha=\lambda_6\, a_{12}^3\,a_{20}\,v_{39}+\mu_6\,  a_{12}^4\,v_{47}$ if $|\alpha|=95$; 
$\alpha=\lambda_7\, a_{12}^2\,a_{20}^2\,v_{39}+\mu_7\,  a_{12}^3\,a_{20}\,v_{47}$ if $|\alpha|=103$; and
$\alpha=\lambda_8\, a_{12}^3\,a_{20}^2\,v_{39}+\mu_8\,  a_{12}^4\,a_{20}\,v_{47}$ if $|\alpha|=115$, where $\lambda_i, \mu_j\in \mathbb{R}$. 
One can check that $\alpha$ is not closed
in any of these cases. Then Definition \ref{def:primera} implies that the manifold $S$ is $47$-formal.
Hence $S$ is $57$-formal, and by Theorem \ref{fm2:criterio2}, $S$ is formal.
\end{proof}

\begin{proposition}
Let $F=\SU(2)$ or $\SO(3)$, and let $F\to S\to B$ be a principal fiber bundle, 
where $B=GI, FI, EII, EVI$ or $EIX$.
Then $S$ is formal. 
\end{proposition}
\begin{proof}
The principal fiber bundle $F\to S\to B$, with $B$ being one of the exceptional Wolf spaces
$B=GI, FI, EII, EVI$ or $EIX$, is such that in all these cases, $H^4(B)$ is one-dimensional, generated by some
$x\in H^4(B)$. Then the Euler class of $S$ is $e(S)=a\, x$. If $a=0$, the fibration is rationally a product and hence
$S$ is formal. If $a\neq 0$, then the fibration is rationally the same as the one considered in each of the previous
subsections. Therefore $S$ is formal as it has been computed above.
\end{proof}

\medskip

\section*{Acknowledgements} We would like to thank S. Salamon for useful suggestions.  The first and third authors
were partially supported by MINECO-FEDER Grant MTM2014-54804-P
and Gobierno Vasco Grant IT1094-16, Spain. 
The second author was partially
supported by MINECO-FEDER Grant (Spain) MTM2015-63612-P.

\end{document}